\definecolor{shadecolor}{gray}{0.90}
\DeclareSymbolFont{rsfscript}{OMS}{rsfs}{m}{b}
\DeclareSymbolFontAlphabet{\mathrsfs}{rsfscript}
\numberwithin{equation}{section}
\theoremstyle{plain}
\newtheorem{theoreme}{Theorem}[section]
\newtheorem{proposition}[theoreme]{Proposition}
\newtheorem{lemma}[theoreme]{Lemma}
\newtheorem{corollary}[theoreme]{Corollary}
\newtheorem{definition}[theoreme]{Definition}
\theoremstyle{definition}
\newtheorem{exemple}[theoreme]{Example}
\theoremstyle{remark}
\newtheorem*{remark}{Remark}
\DeclareMathOperator{\End}{End}
\DeclareMathOperator{\Hom}{Hom}
\DeclareMathOperator{\id}{id}
\DeclareMathOperator{\Id}{Id}
\DeclareMathOperator{\Tr}{Tr}
\DeclareMathOperator{\Irr}{Irr}
\DeclareMathOperator{\ev}{ev}
\DeclareMathOperator{\coev}{coev}
\DeclareMathOperator{\Gr}{Gr}
\DeclareMathOperator{\sGr}{sGr}
\DeclareMathOperator{\Mat}{Mat}
\DeclareMathOperator{\sVect}{sVect}
\DeclareMathOperator{\sdim}{sdim}
\DeclareMathOperator{\rev}{rev}
\DeclareMathOperator{\s}{s}
\DeclareMathOperator{\bid}{Bid}
\DeclareMathOperator{\sbid}{sBid}
\DeclareMathOperator{\Ve}{Vec}
\newcommand*{\fonctionNom}[5]{
  #1 \colon \left\{
  \begin{array}{ccc}
    #2 & \longrightarrow & #3 \\
    #4 & \longmapsto & #5
  \end{array}
  \right.
}
\def\boitegrise#1#2{\begin{centerline}{\fcolorbox{black}{shadecolor}{~
    \begin{minipage}[t]{#2}{\vphantom{~}#1\vphantom{$A_{\displaystyle{A_A}}$}}
            \end{minipage}~}}\end{centerline}\medskip}
\newcommand{\e}{\bm{\varepsilon}}
\newcommand{\1}{\bm{1}}
\title{Slightly degenerate categories and $\mathbb{Z}$-modular data}
\author{Abel Lacabanne}
\email{abel.lacabanne@normalesup.org}  
\address{Institut de recherche en mathématique et physique,
Chemin du Cyclotron 2, boîte L7.01.01,
1348 \sc Louvain-la-Neuve,
\sc Belgique}
\date{\today}
\begin{document}
\maketitle 

\pagestyle{myheadings}
\markboth{\sc A. Lacabanne}{\sc Slightly degenerate categories and $\mathbb{Z}$-modular data}

\section{Introduction}
An $\mathbb{N}$-modular datum over $\mathbb{C}$ is a finite set $I$, a distinguished element $i_0\in I$, a square matrix
$S\in\mathcal{M}_{I}(\mathbb{C})$, a diagonal matrix
$T\in\mathcal{M}_{I}(\mathbb{C})$ such that
\begin{enumerate}
\item for any $i\in I$, $S_{i_0,i}\neq 0$,
\item $S$ is unitary, symmetric and $S^4=\Id$, $(ST)^3=\lambda \Id$
  and $[S^2,T]=\Id$,
\item for any $i,j,k\in I$,
  \[
    N_{i,j}^k =
    \sum_{l\in I}\frac{S_{i,l}S_{j,l}\overline{S_{k,l}}}{S_{i_0,l}}
  \]
  belongs to $\mathbb{N}$.
\end{enumerate}

Modular categories naturally give rise to modular data \cite[Section
8.16]{egno} and given a modular datum, one can ask the question of
finding a modular category with this modular datum. In
\cite{exotic}, Lusztig gives a slightly more restrictive definition
of modular datum, and associate a modular datum to each dihedral
group.

In order to generalize Lusztig's work for imprimitive complex reflection
groups, Malle \cite{unip_malle} defines a modular datum, but such that the
integers $N_{i,j}^k$ are in $\mathbb{Z}$, which we will call
$\mathbb{Z}$-modular datum. We need another categorical framework to
understand these data, since a
modular category always defines an $\mathbb{N}$-modular datum: the
integers $N_{i,j}^k$ are the multiplicities of the object $k$ in the
tensor product $i\otimes j$.

In \cite{double_taft} Bonnafé and Rouquier gave a categorification of the Malle
$\mathbb{Z}$-modular datum associated with cyclic groups, by
constructing a tensor triangulated category with extra structure.

In this article, we explain how slightly degenerate braided pivotal
fusion categories \cite[Definition 2.5]{eno-weakly} give rise to $\mathbb{Z}$-modular
datum. This is also related to braided pivotal superfusion
categories \cite{monoidalsupercat}. Note that there exist two pivotal structures on
supervector spaces, one of which is unitary (and therefore all simple objects have positive dimension), and the other is not (the two simple objects
are of dimension $1$ and $-1$). In \cite{16fold}, the authors consider super-modular categories (unitary modular categories with symmetric center equal to $\sVect$) but they do not get a modular datum from such a category. We will show that with the non-unitary structure, a slightly degenerate category gives rise to a $\mathbb{Z}$-modular datum.

As an application, we will reinterpret the example of Bonnafé and Rouquier in
this setting of slightly degenerate categories. This approach will
be generalized in \cite{double_g}.

This paper is organized as follows. Section \ref{sec:cat} extends
well-known results on modular categories to pivotal braided fusion
categories which are not necessarily spherical: similar to the
modular case, we construct an action of the group $SL_2(\mathbb{Z})$ on
the Grothendieck group of a nondegenerate braided pivotal fusion
category and we show that the twist are always roots of unity. Section \ref{sec:degfus} introduces slightly degenerate categories, and we
explain how such a category gives rise to a $\mathbb{Z}$-modular
datum. We introduce in Section \ref{sec:supermon} the notion of a
supercategory \cite{monoidalsupercat}, and explain how to produce a supercategory from a
slightly degenerate category with extra structure. Finally, Section
\ref{sec:appli} is devoted to the example of Bonnafé-Rouquier
in the setting of slightly degenerate categories.


\subsection*{Acknowledgements}
  I warmly thank my advisor C. Bonnaf\'e for many fruitful discussions
  and his constant support. The paper is partially based upon work
  supported by the NSF under grant DMS-1440140 while the author was in
  residence at the Mathematical Sciences Research Institute in
  Berkeley, California, during the Spring 2018 semester.

\section{Nondegenerate braided pivotal fusion categories}
\label{sec:cat}

 Let $\Bbbk$ be an algebraically closed field of characteristic $0$
 and $\mathcal{C}$ be a tensor category over $\Bbbk$, as defined in
 \cite[Definition 4.1.1]{egno}: $\mathcal{C}$ is a locally finite
 $\Bbbk$-linear abelian rigid monoidal category (with unit object denoted by
 $\1$) such that the bifunctor $\otimes \colon
 \mathcal{C}\times\mathcal{C}\rightarrow\mathcal{C}$ is
 $\Bbbk$-bilinear on morphisms and
 $\End_{\mathcal{C}}(\1)=\Bbbk$. We denote by $\alpha_{X,Y,Z}\colon
 (X\otimes Y)\otimes Z \rightarrow X\otimes(Y\otimes Z)$ the
 associativity constraint, but will often omit it. The left (resp. right) dual of an object $X\in\mathcal{C}$ is denoted by $X^*$ (resp. ${}^*X$) with evaluation and coevaluation morphism
\[
  \ev_X\colon X^*\otimes X\rightarrow\1 \qquad\mathrm{and}\qquad \coev_X\colon\1\rightarrow X\otimes X^*
\]
(resp.
\[
  \ev'_X\colon X\otimes {}^*X\rightarrow\1 \qquad\mathrm{and}\qquad \coev'_X\colon\1\rightarrow {}^*X\otimes X)
\]
such that the following compositions are identities
  \[
    \begin{tikzcd}[column sep=huge]
      X \arrow{r}{\coev_X \otimes \id_X}&X\otimes X^*\otimes X
      \arrow{r}{\id_X\otimes\ev_X} &X,
  \end{tikzcd}
\]
\[
    \begin{tikzcd}[column sep=huge]
      X^* \arrow{r}{\id_{X^*}\otimes\coev_X}&X^*\otimes X\otimes X^*
      \arrow{r}{\ev_x\otimes\id_{X^*}} &X^*.
  \end{tikzcd}
\]

One can define, for any $f\colon X\rightarrow Y$ with $X$ and $Y$
having left duals, the left dual of $f$
as the map $f^*\colon Y^*\rightarrow X^*$ given by the composition 
\[
  \begin{tikzcd}
    Y^* \ar{r}{\id_{Y^*}\otimes \coev_X} &[3em] Y^*\otimes X \otimes X^*
    \ar{r}{\id_{Y^*}\otimes f \otimes \id_{X^*}} &[3em] Y^*\otimes Y
    \otimes X^* \ar{r}{\ev_Y\otimes\id_{X^*}} &[3em] X^*,
  \end{tikzcd}
\]
and similarly there exists the right dual of a map.

We assume that $\mathcal{C}$ is equipped with a pivotal structure \cite[Definition 4.7.8]{egno}: there is an isomorphism of monoidal functors $a_X\colon X\rightarrow X^{**}$. For $f\in\Hom_{\mathcal{C}}(X,X)$, we can define two traces that are elements of $\End_{\mathcal{C}}(1)=\Bbbk$. The right quantum trace $\Tr^R_X(f)$ is given by the composition
\[
\begin{tikzcd}
  \1 \arrow{r}{\coev_X} &[1em] X\otimes X^* \arrow{r}{(a_X\circ f)\otimes \id_{X^*}} &[4em] X^{**}\otimes X^* \arrow{r}{\ev_{X^*}} &\1,
\end{tikzcd}
\]
and the left quantum trace $\Tr^L_X(f)$ is given by the composition
\[
  \begin{tikzcd}
  \1 \arrow{r}{\coev_{X^*}} &[1em] X^*\otimes X^{**} \arrow{r}{\id_{X^*}\otimes(f\circ a_{X}^{-1})} &[4em] X^*\otimes X \arrow{r}{\ev_{X}} &\1.
\end{tikzcd}
\]
It is well known that for any $f\in\End_{\mathcal{C}}(X)$, $\Tr^R_{X^*}(f^*)=\Tr^L_X(f)$. We also define the partial traces of $f\in\End_{\mathcal{C}}(X\otimes Y)$ by
\[
  \begin{tikzcd}
    \id_X\otimes\Tr^R_Y(f)\colon X \ar{r}{\coev_Y} &[1em] X\otimes Y\otimes Y^* \ar{r}{((\id_X\otimes a_Y)\circ f)\otimes\id_{Y}} &[6em] X\otimes Y^{**}\otimes Y \ar{r}{\ev_Y} & X
  \end{tikzcd}
\]
and
\[
  \begin{tikzcd}
    \Tr_X^L\otimes\id_Y(f)\colon Y \ar{r}{\coev_{X^*}} &[1em] X^*\otimes X^{**}\otimes Y \ar{r}{\id_{X^*}\otimes(f\circ(a_X^{-1}\otimes\id_Y))} &[6em] X^*\otimes X\otimes Y \ar{r}{\ev_X} & Y.
  \end{tikzcd}
\]
Finally, denote by $\Tr^{L,R}_{X\otimes Y}(f)$ the endomorphism of $\1$ given by $\Tr_X^L(\id_X\otimes\Tr^R_Y(f))$ which is then equal to $\Tr_Y^R(\Tr_X^L\otimes\id_Y(f))$.

\begin{lemma}
  \label{lem:trace-mixed-dual}
  Let $\mathcal{C}$ be a pivotal rigid monoidal category, $X$ and $Y$ two objects of $\mathcal{C}$ and $f\in\Hom_{\mathcal{C}}(X\otimes Y,X\otimes Y)$. Then $\Tr^{L,R}_{Y^*\otimes X^*}(f^*) = \Tr^{L,R}_{X\otimes Y}(f)$.
\end{lemma}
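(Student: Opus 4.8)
The plan is to unwind both sides of the claimed identity into explicit compositions of evaluation, coevaluation, $f$ (resp.\ $f^*$) and the pivotal isomorphisms, and then to recognize that they agree after using the naturality of $a$, the defining snake relations, and the well-known compatibility $\Tr^R_{X^*}(g^*) = \Tr^L_X(g)$ applied in the appropriate ``partial'' sense. First I would expand $\Tr^{L,R}_{X\otimes Y}(f)$ fully: it is $\Tr^L_X\bigl(\id_X\otimes\Tr^R_Y(f)\bigr)$, so writing $g := \id_X\otimes\Tr^R_Y(f) \in \End_{\mathcal C}(X)$ one gets a scalar that is a composite involving $\coev_{X^*}$, $\coev_Y$, $f$, $a_X^{-1}$, $a_Y$, $\ev_Y$, $\ev_X$. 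On the other side, $\Tr^{L,R}_{Y^*\otimes X^*}(f^*) = \Tr^L_{Y^*}\bigl(\id_{Y^*}\otimes\Tr^R_{X^*}(f^*)\bigr)$ similarly expands into a composite with $\coev_{Y^{**}}$, $\coev_{X^*}$, $f^*$, $a_{Y^*}^{-1}$, $a_{X^*}$, $\ev_{X^*}$, $\ev_{Y^*}$.

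The key structural step is to pass from $f^*$ back to $f$. By definition $f^* \colon (X\otimes Y)^* \to (X\otimes Y)^*$, and there is a canonical isomorphism $(X\otimes Y)^* \cong Y^*\otimes X^*$; under this identification $f^*$ corresponds to a map $Y^*\otimes X^*\to Y^*\otimes X^*$ that is expressed through $f$, $\ev$'s, and $\coev$'s. Feeding this description into the expansion of the left-hand side, and then systematically sliding the pivotal maps $a$ past $f$ using the fact that $a$ is a monoidal natural isomorphism (so $a_{X\otimes Y} = a_X\otimes a_Y$ up to the canonical identifications, and $a_{X^*} = (a_X^{-1})^*$ up to the same), one should be able to cancel pairs $a\,a^{-1}$ and collapse the redundant $\ev$–$\coev$ zig-zags via the snake identities. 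The cleanest packaging is probably to first prove the ``one-sided'' statement $\Tr^R_{Y^*\otimes X^*}(\text{suitable map}) $ in terms of $\Tr^R_{X\otimes Y}$, i.e.\ to reduce Lemma~\ref{lem:trace-mixed-dual} to two applications of $\Tr^R_{Z^*}(h^*)=\Tr^L_Z(h)$ with $Z = X$ and $Z=Y$ respectively, handling the partial traces one tensor factor at a time.

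The main obstacle I anticipate is purely bookkeeping: keeping track of the canonical isomorphism $(X\otimes Y)^*\simeq Y^*\otimes X^*$ and the induced identifications $a_{X\otimes Y}\leftrightarrow a_X\otimes a_Y$, $a_{X^*}\leftrightarrow (a_X^*)^{-1}$, so that the naturality squares one needs actually commute on the nose rather than up to associativity constraints that have been suppressed. A graphical calculus argument circumvents most of this: drawing $f$ in a box with two incoming and two outgoing strands, the right partial trace closes the $Y$-strand into a loop on the right and the left partial trace closes the $X$-strand into a loop on the left, inserting the appropriate $a$ or $a^{-1}$ at each cap; taking the left dual of $f$ rotates the diagram by $180^\circ$, which exchanges ``$X$ on the left'' with ``$X^*$ on the right'' and likewise for $Y$, and turns left partial traces into right partial traces and vice versa, so the rotated diagram for $\Tr^{L,R}_{Y^*\otimes X^*}(f^*)$ is isotopic to the original diagram for $\Tr^{L,R}_{X\otimes Y}(f)$. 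I would therefore present the proof as: (i) reduce to the identity $\Tr^R_{Z^*}(h^*)=\Tr^L_Z(h)$ recalled before the lemma, applied with the partial-trace conventions; (ii) verify the requisite naturality and snake cancellations; (iii) optionally remark that the statement is transparent in the graphical calculus of pivotal categories.
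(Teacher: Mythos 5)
Your plan follows essentially the same route as the paper's proof: expand the partial traces, rewrite $f^*$ through the identification $(X\otimes Y)^*\simeq Y^*\otimes X^*$, use $a_{X^*}=(a_X^{-1})^*$ together with naturality and the snake identities to show that the partial right trace of $f^*$ over $X^*$ is the dual of $(\Tr^L_X\otimes\id_Y)(f)$, and then conclude with the trace--dual compatibility applied to the remaining factor $Y$. This is exactly the paper's two-step argument (a partial-trace version of $\Tr^R_{Z^*}(h^*)=\Tr^L_Z(h)$ for $Z=X$, then the full identity for $Z=Y$), so the proposal is correct and not materially different.
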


\begin{proof}
  In order to simplify notations, we omit tensor product signs between objects. We denote by $a$ the pivotal structure. By definition:
  \begin{align*}
    \id_{Y^*}\otimes\Tr^R_{X^*}(f^*)= (\id_{Y^*}\otimes\ev_{X^{**}})
    \circ (\id_{Y^*}\otimes a_{X^*}\otimes \id_{X^{**}}) \circ (f^*\otimes \id_{X^{**}})
    \circ (\id_{Y^*}\otimes \coev_{X^*}).
  \end{align*}

  Using the definition of $f^*$ and the functoriality of the tensor product, we obtain:
  \begin{align*}
    \id_{Y^*}\otimes\Tr^R_{X^*}(f^*) &= (\ev_Y\otimes \id_{Y^*}) 
    \circ (\id_{Y^*}\otimes \ev_X \otimes \id_{YY^*})
    \circ (\id_{Y^*X^*}\otimes f \otimes \id_{Y^*})
    \circ (\id_{Y^*X^*X}\otimes \coev_Y) \\
    &\circ (\id_{Y^*X^*}\otimes((\id_X\otimes \ev_{X^{**}}) \circ (\id_X\otimes a_{X^*}\otimes \id_{X^{**}}) \circ (\coev_X\otimes \id_{X^{**}})))
    \circ \id_{Y^*}\otimes\coev_{X^*}.
  \end{align*}
  But $a_{X^*}=(a_X^{-1})^*$ (\emph{c.f.} \cite[Exercise 4.7.9]{egno}) and therefore $(\id_X\otimes a_{X^*}) \circ \coev_X = (a_X^{-1}\otimes \id_{X^{***}}) \circ \coev_{X^{**}}$. Using the definition of the duality, one has $(\id_X\otimes \ev_{X^{**}}) \circ (\id_X\otimes a_{X^*}\otimes \id_{X^{**}}) \circ (\coev_X\otimes \id_{X^{**}}) = a_X^{-1}$ and consequently 
  \[
  \id_{Y^*}\otimes\Tr^R_{X^*}(f^*) = (\ev_Y\otimes \id_{Y^*}) \circ (\id_Y^* \otimes (\Tr^{L}_X\otimes \id_Y)(f) \otimes \id_{Y^*}) \circ (\id_{Y^*}\otimes \coev_Y) = \left[(\Tr^{L}_X\otimes \id_Y)(f)\right]^*.
  \]
  Finally,
  \[
  \Tr^{L,R}_{Y^*\otimes X^*}(f^*) = \Tr^{L}_{Y^*}\left(\left[(\Tr^{L}_X\otimes \id_Y)(f)\right]^*\right) = \Tr^R_{Y}((\Tr^{L}_X\otimes \id_Y)(f))=\Tr^{L,R}_{X\otimes Y}(f),
  \]
as expected.
\end{proof}

The \emph{left} and \emph{right} \emph{quantum dimensions} are
\[
  \dim^L(X):=\Tr_X^L(\id_X) \qquad \mathrm{and}\qquad \dim^R(X):=\Tr_X^R(\id_X)
\]
which therefore satisfy $\dim^R(X^*)=\dim^L(X)$. Define the squared norm of an object $X$ by
\[
  |X|^2:=\dim^R(X)\dim^L(X)=\dim^R(X)\dim^R(X^*).
\]
It is a totally positive number if $X$ is simple \cite[Proposition 7.21.14]{egno}: for any embedding $\iota$ of the subfield $\Bbbk_{\mathrm{alg}}$ of algebraic elements of $\Bbbk$ in $\mathbb{C}$, one has $\iota(\lvert X \rvert^2) >0$. The dimension of the category $\mathcal{C}$ is
\[
  \dim(\mathcal{C}) := \sum_{X\in\Irr(\mathcal{C})}\lvert X \rvert^2,
\]
where $\Irr(\mathcal{C})$ denotes the set of isomorphism classes of simple objects of $\mathcal{C}$.

We further assume that $\mathcal{C}$ is braided: there exists a family of binatural isomorphisms $c_{X,Y}\colon X\otimes Y \rightarrow Y\otimes X$ such that the hexagon axioms are satisfied \cite[Definition 8.1.1]{egno}.

For a rigid braided tensor category, there exists a natural
isomorphism $u_X \colon X \rightarrow X^{**}$, called the Drinfeld
morphism, defined as the composition
\[
  \begin{tikzcd}
    X\ar{r}{\coev_{X^*}} & X\otimes X^* \otimes
    X^{**}\ar{r}{c_{X,X^*}} & X^*\otimes X\otimes
    X^{**}\ar{r}{\ev_X} & X^{**}.
  \end{tikzcd}
\]
It satisfies for all $X,Y\in\mathcal{C}$,
\[
  u_{X}\otimes u_{Y}= u_{X\otimes Y}\circ c_{Y,X}\circ c_{X,Y}.
\]
Note that we have suppressed the isomorphism $(X\otimes Y)^{**}\simeq X^{**}\otimes Y^{**}$. To give a pivotal structure $a$ on $\mathcal{C}$ is therefore equivalent to give a \emph{twist} on $\mathcal{C}$, which is a natural isomorphism $\theta_X \colon X \rightarrow X$ satisfying for all $X,Y\in\mathcal{C}$
\[
  \theta_{X\otimes Y}=(\theta_X \otimes \theta_Y) \circ c_{Y,X}\circ c_{X,Y}.
\]
The pivotal structure and the twist are related by $a=u\theta$. We will often endow the braided pivotal category $\mathcal{C}$ with the twist given by the pivotal structure.

\begin{remark}
  \label{rk:twist-pivotal}
  There are two ways of identifying twist and pivotal structures \cite[Appendix A.2]{henriques-penneys-tener}. We will always use the one described above.
\end{remark}

\subsection{Semisimplification}
\label{sec:sscat}

We recall the procedure of semisimplification for pivotal categories
(which are not necessarily spherical) which is given in \cite{etingof_ostrik_ss}. Let $\mathcal{C}$ be a braided pivotal tensor category over $\Bbbk$. Denote by $a_X \colon X \rightarrow X^{**}$ the pivotal structure. 

A morphism $f\in\Hom_{\mathcal{C}}(X,Y)$ is said to be \emph{left}
(resp. \emph{right}) \emph{negligible} if for all
$g\in\Hom_{\mathcal{C}}(Y,X)$ one has $\Tr^L_X(g\circ f) = 0$
(resp. $\Tr^R_X(g\circ f) = 0$). An application of \cite[Proposition
1.5.1]{bruguiere_sln} shows that the notions of left and right
negligible morphisms coincide because $\mathcal{C}$ is
braided. Therefore the left quantum dimension of an object is zero if
and only if its right quantum dimension is zero: the assumption (2) of
\cite[Theorem 2.6]{etingof_ostrik_ss} is satisfied. We
then denote by $\Hom_{\mathrm{negl}}(X,Y)$ the subspace of negligible
morphisms. Define a category $\mathcal{C}^{\text{ss}}$ with the same objects as $\mathcal{C}$ and $\Hom_{\mathcal{C}^{\text{ss}}}(X,Y)=\Hom_{\mathcal{C}}(X,Y)/\Hom_{\mathrm{negl}}(X,Y)$.

\begin{proposition}[{\cite[Theorem 2.6]{etingof_ostrik_ss}}]
  Let $\mathcal{C}$ be a braided pivotal tensor category. The category $\mathcal{C}^{\text{ss}}$ is a semisimple braided pivotal tensor category whose simple objects are the indecomposable objects of $\mathcal{C}$ with non-zero right quantum dimension.
\end{proposition}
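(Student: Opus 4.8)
The plan is to run the Etingof--Ostrik argument, keeping careful track of the non-spherical features. First I would check that the negligible morphisms form a two-sided tensor ideal $\mathcal{N}$ of $\mathcal{C}$, so that the quotient $\mathcal{C}^{\text{ss}}=\mathcal{C}/\mathcal{N}$ inherits the structure. Closure of $\mathcal{N}$ under composition on either side is just cyclicity of the quantum trace, $\Tr^L_X(g\circ f)=\Tr^L_Y(f\circ g)$. For stability under tensoring with an object $Z$, one rewrites the left trace of an endomorphism of $Z\otimes X$ (resp.\ $X\otimes Z$) as a left trace of a partial trace and pulls the negligible factor out of that partial trace, a manipulation in the spirit of Lemma~\ref{lem:trace-mixed-dual}; the point that needs genuine care in the non-spherical setting --- and where the braiding enters --- is that left- and right-negligible morphisms coincide (via \cite[Prop.~1.5.1]{bruguiere_sln}), so that a single ideal $\mathcal{N}$ is stable on both sides. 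Granting this, the quotient functor $Q\colon\mathcal{C}\to\mathcal{C}^{\text{ss}}$ is $\Bbbk$-linear monoidal, full, and the identity on objects, and the associativity and braiding constraints, the pivotal isomorphisms $a_X$, and the (co)evaluations all descend, being morphisms that satisfy the relevant identities in $\mathcal{C}$; so $\mathcal{C}^{\text{ss}}$ is a rigid braided pivotal $\Bbbk$-linear category with $\End_{\mathcal{C}^{\text{ss}}}(\1)=\Bbbk$.

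The technical heart, which I expect to be the main obstacle, is the claim that $\Tr^L_X(n)=0$ for every nilpotent endomorphism $n\in\End_{\mathcal{C}}(X)$ --- note this does not follow merely from the trace form becoming non-degenerate on the quotient, since traces need not annihilate nilpotents in a general finite-dimensional algebra. I would prove it using that $\mathcal{C}$ is locally finite abelian, so $X$ has finite length and $n$ preserves a composition series of $X$: refine the kernel filtration $0\subseteq\ker n\subseteq\ker n^2\subseteq\cdots\subseteq X$ to a composition series $0=X_0\subset\cdots\subset X_m=X$, and observe that $n(X_j)\subseteq X_{j-1}$, so $n$ induces $0$ on each simple subquotient $X_j/X_{j-1}$. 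Since the quantum trace is additive over filtered endomorphisms --- by the same naturality and monoidality of $a$ that make quantum dimension additive on short exact sequences --- one gets $\Tr^L_X(n)=\sum_j\Tr^L_{X_j/X_{j-1}}(0)=0$. Establishing this claim, and the trace-additivity it rests on, is the crux.

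Given the claim, the rest is bookkeeping. First, every radical morphism is negligible: if $\phi\colon X\to Y$ is such that $\psi\circ\phi$ is non-invertible --- hence nilpotent --- for every $\psi\colon Y\to X$, then $\Tr^L_X(\psi\circ\phi)=0$, so $\phi\in\mathcal{N}(X,Y)$. Let $X$ be indecomposable, so $R:=\End_{\mathcal{C}}(X)$ is a finite-dimensional local $\Bbbk$-algebra with nil maximal ideal $\mathfrak{m}_X$ and residue field $\Bbbk$. If $\dim^R(X)\neq 0$, then $\dim^L(X)\neq 0$, so $\Tr^L_X(\id_X)\neq 0$ and $\id_X\notin\mathcal{N}(X,X)$; hence $\mathcal{N}(X,X)$ is a proper ideal of $R$ that contains $\mathfrak{m}_X$ (the radical endomorphisms), so $\mathcal{N}(X,X)=\mathfrak{m}_X$ and $\End_{\mathcal{C}^{\text{ss}}}(X)=R/\mathfrak{m}_X=\Bbbk$; and for indecomposable $Y\not\cong X$ every morphism $X\to Y$ is radical (a split monomorphism into indecomposable $Y$ would force $X\cong Y$), so $\Hom_{\mathcal{C}^{\text{ss}}}(X,Y)=0$. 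If $\dim^R(X)=0$, writing $g\in R$ as $\lambda\,\id_X+n$ with $n\in\mathfrak{m}_X$ gives $\Tr^L_X(g)=\lambda\dim^L(X)+\Tr^L_X(n)=0$, so $\id_X\in\mathcal{N}(X,X)$ and $X$ is a zero object of $\mathcal{C}^{\text{ss}}$. By Krull--Schmidt every object of $\mathcal{C}$, hence of $\mathcal{C}^{\text{ss}}$, is a finite direct sum of images of indecomposables of $\mathcal{C}$; those of zero right quantum dimension become $0$, and the rest become pairwise non-isomorphic objects with endomorphism algebra $\Bbbk$ and no morphisms between distinct ones. Hence idempotents split in $\mathcal{C}^{\text{ss}}$ and it is a semisimple (abelian) tensor category whose simple objects are exactly the indecomposables of $\mathcal{C}$ of nonzero right quantum dimension --- which include all simple objects of $\mathcal{C}$, since those have $\lvert X\rvert^2>0$ and so nonzero dimension.
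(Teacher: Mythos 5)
Your skeleton is the right one, but the step you yourself flag as the crux is not actually supported. You justify the vanishing of $\Tr^L_X(n)$ for nilpotent $n$ by additivity of the quantum trace along a composition series, and you justify that additivity ``by the same naturality and monoidality of $a$ that make quantum dimension additive on short exact sequences''. Naturality and monoidality of the pivotal structure give additivity of traces on \emph{direct sums} and multiplicativity on tensor products; additivity along a non-split exact sequence is not a formal consequence of them (the coevaluation of $X$ does not decompose in terms of the coevaluations of a subobject and of its quotient), and at the level of generality of this proposition --- an arbitrary, possibly non-semisimple, pivotal tensor category --- it is not an off-the-shelf fact. It is in effect at least as strong as what you are trying to prove, since trace-additivity along the kernel filtration instantly gives vanishing on nilpotents; so as written the argument is circular at its key point. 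The standard repair (this is how Etingof--Ostrik argue) avoids additivity entirely: if $n^m=0$, write $n=i\circ q$ with $q\colon X\rightarrow I=\im(n)$ epi and $i\colon I\rightarrow X$ mono; then $(q\circ i)^{m-1}=0$ because $i$ is mono and $q$ epi, $I$ has strictly smaller length than $X$, and cyclicity $\Tr^L_X(i\circ q)=\Tr^L_I(q\circ i)$ plus induction on length yields $\Tr^L_X(n)=0$. With that substitution, your remaining bookkeeping (local endomorphism rings, radical morphisms being negligible, Krull--Schmidt, identification of the simples of $\mathcal{C}^{\text{ss}}$) goes through.

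For comparison: the paper does not reprove this statement at all. It only verifies the hypothesis of \cite[Theorem 2.6]{etingof_ostrik_ss} --- that left and right quantum dimensions vanish simultaneously --- by invoking \cite[Proposition 1.5.1]{bruguiere_sln} to show that left and right negligible morphisms coincide because $\mathcal{C}$ is braided, and then cites the theorem; your remark that the braiding is exactly what makes the negligibles a single two-sided tensor ideal is the same point, correctly placed. One incorrect aside in your last sentence: in this generality the simple objects of $\mathcal{C}$ need \emph{not} all survive in $\mathcal{C}^{\text{ss}}$. The total positivity of $\lvert X\rvert^2$ you invoke is a fusion-category statement; in a non-semisimple pivotal tensor category a simple object can have quantum dimension zero (e.g.\ Steinberg-type simples at roots of unity). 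The proposition does not claim this, so nothing else in your argument depends on it, but the parenthetical should be deleted.
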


\subsection{S-matrices and symmetric center}
\label{sec:smat}

The following definition is due to Müger \cite[Definition 2.9]{muger_structure}.

\begin{definition}
  The symmetric center $\mathcal{Z}_{\mathrm{sym}}(\mathcal{C})$ of a braided monoidal category $\mathcal{C}$ is the full subcategory of $\mathcal{C}$ with objects $X$ such that
\[
  \forall Y \in \mathcal{C}, c_{Y,X}\circ c_{X,Y} = \id_{X\otimes Y}.
\]

  We say that $\mathcal{C}$ is nondegenerate if $\1$ is the unique simple object in $\mathcal{Z}_{\mathrm{sym}}(\mathcal{C})$. 
\end{definition}

\boitegrise{{\bf{Hypothesis and notations.}}{\emph{We suppose until
      the end of this Section that the category $\mathcal{C}$ is a
      braided pivotal fusion category. Denote by $\Irr(\mathcal{C})$
      the set of isomorphism classes of simple objects in
      $\mathcal{C}$ and by $\Gr(\mathcal{C})$ its Grothendieck ring
      which admits $([X])_{X\in\Irr(\mathcal{C})}$ as a basis. For
      $X,Y,Z\in\Irr(\mathcal{C})$, we denote by $N_{X,Y}^Z$ the
      multiplicity of $Z$ in the tensor product $X\otimes Y$. Then
      $\Gr(\mathcal{C})$ is a free $\mathbb{Z}$-algebra with basis
      $\Irr(\mathcal{C})$ and the structure constants are given by the
      positive integers $N_{X,Y}^Z$:
      \[
        [X]\cdot [Y] = \sum_{Z\in\Irr(\mathcal{C})}N_{X,Y}^Z[Z].
      \]}}}{0.8\textwidth}

For any simple object $X$, its left and right quantum dimensions are non-zero \cite[Proposition 4.8.4]{egno}. If $X$ and $Y$ are objects of $\mathcal{C}$, we set
\[
  s_{X,Y}^R:=(\id_X\otimes\Tr_Y^R)(c_{Y,X}\circ c_{X,Y}) \in \End_{\mathcal{C}}(X)
\]
and
\[
  s_{X,Y}^L:=(\Tr^L_X\otimes\id_Y)(c_{Y,X}\circ c_{X,Y}) \in \End_{\mathcal{C}}(Y).
\]
These induce two morphisms of abelian groups
\[
  \fonctionNom{s_X^R}{\Gr(\mathcal{C})}{\End_{\mathcal{C}}(X)}{Y}{s^R_{X,Y}} \qquad\mathrm{and}\qquad \fonctionNom{s_Y^L}{\Gr(\mathcal{C})}{\End_{\mathcal{C}}(Y)}{X}{s^L_{X,Y}}.
\]

\begin{proposition}[{\cite[Proposition 8.3.11]{egno}}]
  Let $\mathcal{C}$ be a braided pivotal fusion category. If $X\in\mathcal{C}$ is simple then $s_X^R\colon\Gr(\mathcal{C}) \rightarrow \Bbbk$ and $s_X^L\colon\Gr(\mathcal{C}) \rightarrow \Bbbk$ are morphisms of rings.
\end{proposition}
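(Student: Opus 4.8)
The plan is as follows. Since $\mathcal{C}$ is a fusion category over the algebraically closed field $\Bbbk$ of characteristic $0$, Schur's lemma gives $\End_{\mathcal{C}}(X)=\Bbbk\,\id_X$ for every simple object $X$, so I may write $s^R_{X,Y}=\lambda^R_{X,Y}\id_X$ and $s^L_{Y,X}=\lambda^L_{Y,X}\id_X$ with $\lambda^R_{X,Y},\lambda^L_{Y,X}\in\Bbbk$, and the maps in question become $s^R_X\colon[Y]\mapsto\lambda^R_{X,Y}$ and $s^L_X\colon[Y]\mapsto\lambda^L_{Y,X}$. As these are already known to be morphisms of abelian groups, and $\Gr(\mathcal{C})$ is the free $\mathbb{Z}$-module on $\Irr(\mathcal{C})$ with $[Y]\cdot[Z]=[Y\otimes Z]$, it is enough to prove that $s^R_X(\1)=s^L_X(\1)=1$ and that $s^R_X$ and $s^L_X$ are multiplicative on products of basis elements.

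The core of the argument is the pair of identities, valid for arbitrary objects $X,Y,Z$ of $\mathcal{C}$,
\[
  s^R_{X,Y\otimes Z}=s^R_{X,Z}\circ s^R_{X,Y}
  \qquad\text{and}\qquad
  s^L_{Y\otimes Z,X}=s^L_{Y,X}\circ s^L_{Z,X}.
\]
I would establish the first and deduce the second by the mirror argument (exchanging left and right partial traces, and the two hexagon axioms). Expand the double braiding by the hexagons: up to associativity constraints $c_{X,Y\otimes Z}=(\id_Y\otimes c_{X,Z})\circ(c_{X,Y}\otimes\id_Z)$ and $c_{Y\otimes Z,X}=(c_{Y,X}\otimes\id_Z)\circ(\id_Y\otimes c_{Z,X})$, whence $c_{Y\otimes Z,X}\circ c_{X,Y\otimes Z}=(c_{Y,X}\otimes\id_Z)\circ\bigl(\id_Y\otimes(c_{Z,X}\circ c_{X,Z})\bigr)\circ(c_{X,Y}\otimes\id_Z)$. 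Now write the partial trace over $Y\otimes Z$ as the partial trace over $Y$ of the partial trace over $Z$, which is legitimate because the pivotal structure is monoidal so that $a_{Y\otimes Z}$ is identified with $a_Y\otimes a_Z$. Applying $\id_{X\otimes Y}\otimes\Tr^R_Z$ leaves the outer factors $c_{X,Y}\otimes\id_Z$ and $c_{Y,X}\otimes\id_Z$ alone and turns the middle factor into $\id_Y\otimes s^R_{X,Z}$, giving $c_{Y,X}\circ(\id_Y\otimes s^R_{X,Z})\circ c_{X,Y}$ in $\End_{\mathcal{C}}(X\otimes Y)$; by naturality of the braiding this equals $(s^R_{X,Z}\otimes\id_Y)\circ(c_{Y,X}\circ c_{X,Y})$, and applying $\id_X\otimes\Tr^R_Y$ and pulling the endomorphism $s^R_{X,Z}$ of $X$ out of the partial trace yields $s^R_{X,Z}\circ s^R_{X,Y}$. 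Graphically this just says that encircling the bundle $Y\otimes Z$ by the $X$-strand is the same as encircling $Z$ and then $Y$.

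To conclude, observe that when $X$ is simple the morphisms $s^R_{X,Y},s^R_{X,Z},s^R_{X,Y\otimes Z}$ are all scalars, so the first identity reads $\lambda^R_{X,Y\otimes Z}=\lambda^R_{X,Y}\lambda^R_{X,Z}$, i.e. $s^R_X([Y]\cdot[Z])=s^R_X([Y])\,s^R_X([Z])$; the case of $s^L_X$ is identical using the second identity. For the unit, the unit constraints identify $c_{X,\1}$ and $c_{\1,X}$ with $\id_X$, so $c_{\1,X}\circ c_{X,\1}=\id_{X\otimes\1}$, and the partial trace over $\1$ is the identity of $\End_{\mathcal{C}}(X)$ (as $\dim^R(\1)=1$), giving $s^R_{X,\1}=\id_X\leftrightarrow 1$; similarly $s^L_{\1,X}=\id_X$. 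The only delicate point is the bookkeeping of associativity constraints, both in the hexagon expansion and in the decomposition of $\Tr^R_{Y\otimes Z}$ into iterated partial traces; this is routine and is cleanest in the graphical calculus for braided pivotal categories.
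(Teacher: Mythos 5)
Your argument is correct and is essentially the standard one: the paper gives no proof of this proposition (it simply cites \cite[Proposition 8.3.11]{egno}), and the argument there is exactly your computation --- expand the double braiding via the hexagon axioms, decompose $\Tr^R_{Y\otimes Z}$ into iterated partial traces using monoidality of the pivotal structure, pull the resulting endomorphism of $X$ out of the trace by naturality of the braiding, and use Schur's lemma to turn $s^R_{X,Y\otimes Z}=s^R_{X,Z}\circ s^R_{X,Y}$ (together with $s^R_{X,\1}=\id_X$) into multiplicativity of scalars. The only cosmetic point is that the mirror computation for the left trace naturally produces $s^L_{Y\otimes Z,X}=s^L_{Z,X}\circ s^L_{Y,X}$, the opposite composition order to the one you wrote, which is immaterial here since $\End_{\mathcal{C}}(X)=\Bbbk$ for $X$ simple.
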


We now consider the matrices $S^{R,R}$, $S^{L,R}$ and $S^{L,L}$ in $\Mat_{\Irr(\mathcal{C})}(\Bbbk)$ defined by
\begin{align*}
  S^{R,R}_{X,Y} &:= \Tr^R_{X\otimes Y}(c_{Y,X}\circ c_{X,Y}) = \Tr^R_{X}(s^R_{X,Y}),\\
  S^{L,L}_{X,Y} &:= \Tr^L_{X\otimes Y}(c_{Y,X}\circ c_{X,Y}) = \Tr^L_{Y}(s^L_{X,Y}),\\
  S^{L,R}_{X,Y} &:= \Tr^{L,R}_{X\otimes Y}(c_{Y,X}\circ c_{X,Y}) = \Tr^R_{Y}(s^L_{X,Y}) = \Tr^L_{X}(s^R_{X,Y}).
\end{align*}

These three matrices are related as follow
\[
  \frac{\dim^L(X)}{\dim^R(X)}S^{R,R}_{X,Y}=S^{L,R}_{X,Y}=\frac{\dim^R(Y)}{\dim^L(Y)}S^{L,L}_{X,Y}.
\]

\begin{remark}
  The matrices $S^{R,R}$ and $S^{L,L}$ are symmetric but $S^{R,L}$ is not
  in general; if the pivotal structure is spherical, these three
  matrices are equal.

  As for any $f\in\Hom_{\mathcal{C}}(X,X)$ we have
  $\Tr^R_X(f)=\Tr_{X^*}^L(f^*)$, the following relations are
  satisfied:
  \begin{align*}
    S_{X^*,Y^*}^{R,R}&=S_{X,Y}^{L,L},\\
    S_{X^*,Y^*}^{L,R}&=S_{Y,X}^{L,R}.
  \end{align*}
  Hence the matrix $(S^{L,R}_{X,Y^*})_{X,Y\in\Irr(\mathcal{C})}$ is
  symmetric.
\end{remark}

It is clear that if $X \in \Irr(\mathcal{Z}_{\mathrm{sym}}(\mathcal{C}))$ then for all $Y \in \Irr(\mathcal{C})$ we have $S_{X,Y}^{?,?'}=\dim^?(X)\dim^{?'}(Y)$, for $(?,?')\in\{(R,R),(L,R),(L,L)\}$.

\begin{proposition}[{\cite[Proposition 8.20.5]{egno}}]
  \label{prop:smat_sym}
  Let $\mathcal{C}$ be a braided pivotal fusion category and $X$ be a simple object in $\mathcal{C}$. The following are equivalent:
  \begin{enumerate}
  \item $X\in\mathcal{Z}_{\mathrm{sym}}(\mathcal{C})$,
  \item for all $Y \in \Irr(\mathcal{C})$ we have $S_{X,Y}^{L,R}=\dim^L(X)\dim^{R}(Y)$,
  \item for all $Y \in \Irr(\mathcal{C})$ we have $S_{X,Y}^{R,R}=\dim^R(X)\dim^{R}(Y)$,
  \item for all $Y \in \Irr(\mathcal{C})$ we have $S_{X,Y}^{L,L}=\dim^L(X)\dim^{L}(Y)$.
  \end{enumerate}
\end{proposition}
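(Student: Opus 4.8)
The plan is to prove the chain of equivalences by showing $(1)\Rightarrow(2)$, $(2)\Leftrightarrow(3)$, $(2)\Leftrightarrow(4)$, and then $(3)\Rightarrow(1)$ (or any convenient cycle). The implication $(1)\Rightarrow(2)$ is already recorded in the paragraph just before the statement: if $X\in\mathcal{Z}_{\mathrm{sym}}(\mathcal{C})$ then $c_{Y,X}\circ c_{X,Y}=\id_{X\otimes Y}$, so $S^{L,R}_{X,Y}=\Tr^{L,R}_{X\otimes Y}(\id_{X\otimes Y})=\Tr^L_X(\id_X)\Tr^R_Y(\id_Y)=\dim^L(X)\dim^R(Y)$, and the same computation gives $(3)$ and $(4)$. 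The equivalences $(2)\Leftrightarrow(3)\Leftrightarrow(4)$ are then immediate from the displayed relation $\frac{\dim^L(X)}{\dim^R(X)}S^{R,R}_{X,Y}=S^{L,R}_{X,Y}=\frac{\dim^R(Y)}{\dim^L(Y)}S^{L,L}_{X,Y}$, since all quantum dimensions of simple objects are nonzero: multiplying through by the appropriate nonzero scalar converts one normalization of the identity into another.

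So the real content is one reverse implication, say $(2)\Rightarrow(1)$. First I would reduce the statement $c_{Y,X}\circ c_{X,Y}=\id_{X\otimes Y}$ to a statement about the partial trace $s^L_{X,Y}\in\End_{\mathcal{C}}(Y)$, or rather about $s^R_{X,Y}\in\End_{\mathcal{C}}(X)=\Bbbk\id_X$ since $X$ is simple. Write $s^R_{X,Y}=\varphi(Y)\id_X$ for a scalar $\varphi(Y)$; by the proposition cited above (EGNO 8.3.11), $Y\mapsto\varphi(Y)$ is a ring homomorphism $\Gr(\mathcal{C})\to\Bbbk$, and since $s^R_{X,Y}=\varphi(Y)\id_X$ one has $S^{L,R}_{X,Y}=\Tr^L_X(s^R_{X,Y})=\varphi(Y)\dim^L(X)$. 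Hypothesis $(2)$ thus says $\varphi(Y)=\dim^R(Y)$ for every simple $Y$, i.e. the partial-trace character of $X$ coincides with the dimension character. Now I would argue that $\varphi(Y)=\dim^R(Y)$ forces, for each simple $Y$, the double braiding $c_{Y,X}\circ c_{X,Y}$ — which is an endomorphism of $X\otimes Y$ — to equal $\id_{X\otimes Y}$. The key point is that $c_{Y,X}\circ c_{X,Y}$ is a natural isomorphism commuting with the action of $\Gr$, and one can test it against the evaluation/coevaluation structure: the mixed partial trace $s^R_{X,Y}$ having the ``trivial'' value $\dim^R(Y)\id_X$ means that the corresponding element, when paired in the nondegeneracy sense, cannot be distinguished from the identity. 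Concretely, I would use that for simple $X$ the double braiding $c_{Y,X}\circ c_{X,Y}\in\Aut(X\otimes Y)$ together with the fact that $X\otimes Y$ decomposes into simples $Z$, and on each isotypic component the double braiding acts by a scalar (a ratio of twists, $\theta_Z\theta_X^{-1}\theta_Y^{-1}$ up to the identification $a=u\theta$); the partial trace $\varphi(Y)$ is then a dimension-weighted average of these scalars, $\varphi(Y)\dim^R(X)=\sum_Z N_{X,Y}^Z \frac{\theta_Z}{\theta_X\theta_Y}\dim^R(Z)$, and equality with $\dim^R(X)\dim^R(Y)=\sum_Z N_{X,Y}^Z\dim^R(Z)$ together with $|\theta_Z/(\theta_X\theta_Y)|=1$ (all twists are roots of unity, as established in Section~\ref{sec:cat}) and total positivity of the $|Z|^2$ forces every ratio $\theta_Z/(\theta_X\theta_Y)$ with $N_{X,Y}^Z\neq0$ to equal $1$. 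Hence the double braiding is the identity on every summand of $X\otimes Y$, i.e. $c_{Y,X}\circ c_{X,Y}=\id_{X\otimes Y}$ for all simple $Y$, and by additivity for all $Y$, giving $X\in\mathcal{Z}_{\mathrm{sym}}(\mathcal{C})$.

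The main obstacle is making the ``dimension-weighted average of roots of unity equals the sum of the weights'' argument fully rigorous in the non-spherical setting: the weights $\dim^R(Z)$ are not individually positive, only the norms $|Z|^2=\dim^R(Z)\dim^R(Z^*)$ are totally positive, so I cannot directly invoke positivity of each term. The fix is to pass through a suitable embedding $\iota\colon\Bbbk_{\mathrm{alg}}\hookrightarrow\mathbb{C}$ and instead pair the relation with its conjugate — i.e. multiply the identity for the pair $(X,Y)$ by the corresponding identity for $(X^*,Y^*)$, using the relations $S^{R,R}_{X^*,Y^*}=S^{L,L}_{X,Y}$ from the preceding remark — so that the genuinely positive quantities $\iota(|Z|^2)$ appear as weights; then an equality of the form $\sum_Z w_Z \zeta_Z=\sum_Z w_Z$ with $w_Z>0$ and $|\zeta_Z|\le 1$ does force $\zeta_Z=1$. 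Alternatively one can sidestep this entirely by invoking EGNO Proposition 8.20.5 directly, of which this is essentially a restatement adapted to the three $S$-matrices; if the paper wants a self-contained argument, the conjugate-pairing trick above is the cleanest route. I would open with the easy implications, then devote the bulk of the proof to $(2)\Rightarrow(1)$ via the twist-average computation and the conjugate-pairing positivity argument.
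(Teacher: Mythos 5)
Your outline of the easy parts is correct and is essentially what the paper already records: $(1)\Rightarrow(2),(3),(4)$ is the observation made just before the statement, and $(2)\Leftrightarrow(3)\Leftrightarrow(4)$ follows from the displayed slope relation between $S^{L,R}$, $S^{R,R}$, $S^{L,L}$ because quantum dimensions of simple objects are nonzero. But be aware that the paper offers no proof of the hard converse at all: the proposition is quoted from \cite[Proposition 8.20.5]{egno} (adapted to the pivotal setting), so your fallback ``invoke EGNO directly'' is in fact the paper's route. Your attempted self-contained proof of $(2)\Rightarrow(1)$ contains two genuine gaps. First, you invoke ``all twists are roots of unity, as established in Section \ref{sec:cat}'', but Theorem \ref{thm:vafa-non-dg} assumes nondegeneracy, while the present proposition concerns an arbitrary braided pivotal fusion category; worse, within the paper that theorem rests on Proposition \ref{prop:1barinv}, whose proof cites exactly the proposition you are proving, so the appeal is circular. (This is repairable --- e.g. the slope $\dim^L(X)/\dim^R(X)$ is the scalar of a tensor automorphism of the identity functor, hence a root of unity, and Vafa's matrix argument then runs without nondegeneracy --- but you would have to carry this out.)

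Second, and more seriously, the ``conjugate-pairing trick'' does not deliver what you claim. Writing $\mu_Z=\theta_Z/(\theta_X\theta_Y)$, hypothesis $(3)$ together with \eqref{eq:twist-S-matrice} gives $\sum_Z N_{X,Y}^Z\mu_Z\dim^R(Z)=\sum_Z N_{X,Y}^Z\dim^R(Z)$, and the identity for $(X^*,Y^*)$ is the analogous one with $\dim^L(Z)$ (and $\theta_{Z^*}$). Multiplying the two identities yields a product of sums, i.e.\ a double sum whose cross terms carry weights $\dim^R(Z)\dim^L(W)$ for $Z\neq W$; at no point does a single sum weighted by the totally positive $\lvert Z\rvert^2$ appear. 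Since the weights $N_{X,Y}^Z\dim^R(Z)$ need not be positive (already in the spherical, non-pseudounitary case), the equality-case analysis ``average of roots of unity equals the sum of the weights'' --- which is the entire content of $(2)\Rightarrow(1)$ --- remains unproved in your sketch. So either supply a genuine positivity (or other equality-case) argument at this point, or do as the paper does and cite \cite[Proposition 8.20.5]{egno}, checking only that the pivotal version reduces to it.
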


The category $\mathcal{C}$ can be endowed with another braiding,
namely the reverse braiding. We denote it by $c^{\rev}$ and it
is defined by $c_{X,Y}^{\rev}=c_{Y,X}^{-1}$. We denote by
$\mathcal{C}^{\rev}$ the category $\mathcal{C}$ equipped with the
reverse braiding, and by $S^{\rev,R,R}$, $S^{\rev,L,L}$ and
$S^{\rev,L,R}$ the corresponding $S$-matrices. Note that we use the
same pivotal structure on $\mathcal{C}$ and $\mathcal{C}^{\rev}$ for
the computation of the traces. 

\begin{proposition}
  \label{prop:rev-S-mat}
  Let $\mathcal{C}$ a braided pivotal fusion category. Then for any
  $X$ and $Y$ simple objects we have
  \[
    S^{\rev,L,R}_{X,Y}=S^{R,R}_{Y,X^*}.
  \]
\end{proposition}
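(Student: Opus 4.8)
The plan is to unwind both sides of the claimed identity using the definitions of the $S$-matrices given above, and to track carefully how the reverse braiding and the duality interact. Recall that
\[
  S^{\rev,L,R}_{X,Y} = \Tr^{L,R}_{X\otimes Y}\bigl(c^{\rev}_{Y,X}\circ c^{\rev}_{X,Y}\bigr) = \Tr^{L,R}_{X\otimes Y}\bigl(c_{X,Y}^{-1}\circ c_{Y,X}^{-1}\bigr),
\]
while $S^{R,R}_{Y,X^*} = \Tr^R_{Y\otimes X^*}\bigl(c_{X^*,Y}\circ c_{Y,X^*}\bigr)$. So the goal is to prove
\[
  \Tr^{L,R}_{X\otimes Y}\bigl(c_{X,Y}^{-1}\circ c_{Y,X}^{-1}\bigr) = \Tr^R_{Y\otimes X^*}\bigl(c_{X^*,Y}\circ c_{Y,X^*}\bigr).
\]

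First I would use Lemma~\ref{lem:trace-mixed-dual}: applying it to $f = c_{X,Y}^{-1}\circ c_{Y,X}^{-1} \in \End_{\mathcal{C}}(X\otimes Y)$ converts the left-hand side into $\Tr^{L,R}_{Y^*\otimes X^*}(f^*)$, where $f^* = (c_{Y,X}^{-1})^*\circ(c_{X,Y}^{-1})^* = (c_{Y,X}^*)^{-1}\circ(c_{X,Y}^*)^{-1}$. The key naturality/duality fact I would then invoke is the standard identity relating the dual of a braiding morphism to a braiding of duals: in a rigid braided category one has $(c_{X,Y})^* = c_{Y^*,X^*}^{-1}$ after the canonical identification $(X\otimes Y)^* \simeq Y^*\otimes X^*$ (this follows from the hexagon axioms together with the zig-zag relations, and is essentially the computation that $u_X$ is well-defined). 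Substituting this in, $f^*$ becomes, up to reordering, $c_{X^*,Y^*}\circ c_{Y^*,X^*}$ acting on $Y^*\otimes X^*$ — i.e. $f^* = c^{\rev}$-type data is eliminated and one recognizes $\Tr^{L,R}_{Y^*\otimes X^*}(f^*) = S^{L,R}_{Y^*,X^*}$ for the \emph{original} braiding on $\mathcal{C}$. Then the already-noted symmetry relations in the Remark above, namely $S^{L,R}_{X^*,Y^*} = S^{L,R}_{Y,X}$, together with the relation $\tfrac{\dim^L}{\dim^R}S^{R,R} = S^{L,R}$ and the identity $S^{R,R}_{X^*,Y^*} = S^{L,L}_{X,Y}$, let me rewrite $S^{L,R}_{Y^*,X^*}$ in terms of $S^{R,R}_{?,X^*}$ and match it with the right-hand side. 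Alternatively — and this may be cleaner — I would directly compare the graphical/categorical expressions for $\Tr^{L,R}_{X\otimes Y}(c_{X,Y}^{-1}\circ c_{Y,X}^{-1})$ and $\Tr^R_{Y\otimes X^*}(c_{X^*,Y}\circ c_{Y,X^*})$ by inserting a coevaluation/evaluation pair for $X$ to trade the $X$-strand for an $X^*$-strand, using that the double braiding $c_{Y,X}^{-1}c_{X,Y}^{-1}$ on $X\otimes Y$ becomes the double braiding $c_{X^*,Y}c_{Y,X^*}$ on $X^*\otimes Y$ after sliding $Y$ around the cap.

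The step I expect to be the main obstacle is getting the duality bookkeeping exactly right: one must be scrupulous about (i) which pivotal structure is used when taking traces (the statement fixes the \emph{same} pivotal structure $a$ on $\mathcal{C}$ and $\mathcal{C}^{\rev}$, so the $a_X$'s do not get inverted when passing to the reverse braiding, even though the Drinfeld morphism $u$ does), (ii) the precise form of $(c_{X,Y})^*$ under the canonical iso $(X\otimes Y)^*\simeq Y^*\otimes X^*$, including whether an inverse appears, and (iii) keeping $\Tr^{L,R}$ versus $\Tr^R$ straight, since on the left we have a mixed $L,R$ trace on a two-factor object $X\otimes Y$ while on the right we have a pure $R$ trace on $Y\otimes X^*$ — the equality of these two different-looking trace functionals is exactly what Lemma~\ref{lem:trace-mixed-dual} and the duality relations in the preceding Remark are designed to supply, so the proof is really a matter of chaining those together in the correct order rather than any new idea. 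I would therefore present the argument as: (1) rewrite $S^{\rev,L,R}_{X,Y}$ via the reverse braiding; (2) apply Lemma~\ref{lem:trace-mixed-dual}; (3) apply the dual-of-braiding identity; (4) identify the result as $S^{L,R}_{Y^*,X^*}$ and use the symmetry and dimension relations to land on $S^{R,R}_{Y,X^*}$.
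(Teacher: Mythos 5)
Your primary route fails at the ``key naturality/duality fact''. In a rigid braided category the left dual of the braiding is $(c_{X,Y})^*=c_{X^*,Y^*}$ under the canonical identification $(X\otimes Y)^*\simeq Y^*\otimes X^*$ --- this is exactly the fact (EGNO, Exercise 8.9.2) that the paper itself uses in the proof of Proposition \ref{prop:twist-dim} in the form $(c_{X,X})^*=c_{X^*,X^*}$. Your claimed identity $(c_{X,Y})^*=c_{Y^*,X^*}^{-1}$ would force the double braiding of $X^*$ with itself to be trivial for every $X$, and is false in general: dualizing an endomorphism rotates the whole diagram by $\pi$, which \emph{preserves} the sign of every crossing. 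Consequently Lemma \ref{lem:trace-mixed-dual} cannot play the role you assign to it: with the correct identity, $f^*=(c_{Y,X}^{-1})^*\circ(c_{X,Y}^{-1})^*=c_{Y^*,X^*}^{-1}\circ c_{X^*,Y^*}^{-1}$ is again a \emph{reverse} double braiding, so your chain only yields $S^{\rev,L,R}_{X,Y}=S^{\rev,L,R}_{Y^*,X^*}$ (the symmetry of the Remark, applied to $\mathcal{C}^{\rev}$) and never converts $c^{\rev}$ into $c$. Worse, if your identity were true, steps (1)--(4) would give $S^{\rev,L,R}_{X,Y}=S^{L,R}_{Y^*,X^*}=S^{L,R}_{X,Y}$, which is false in general (for a modular category it would assert that the $S$-matrix equals its complex conjugate); note also that even then you would land on $\tfrac{\dim^L(X)}{\dim^R(X)}S^{R,R}_{X,Y}$, not on $S^{R,R}_{Y,X^*}$. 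The sign of a crossing flips only when you dualize \emph{one} of the two strands, i.e.\ when you slide the other strand through an evaluation or coevaluation of $X$; that one-strand statement is Lemma \ref{lem:cup-cap-braid}, not Lemma \ref{lem:trace-mixed-dual}, and conflating the two is the genuine gap here.

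Your alternative one-sentence sketch is, in substance, the paper's actual proof: write out $\Tr^{L,R}_{X\otimes Y}(c_{X,Y}^{-1}\circ c_{Y,X}^{-1})$ from the definition, use the two identities of Lemma \ref{lem:cup-cap-braid} together with naturality of the braiding to trade each inverse crossing with the $X$-strand for a positive crossing with the $X^*$-strand, and clean up the pivotal maps via naturality, $(a_X^{-1})^*=a_{X^*}$ and $\ev_W\circ(\id_W\otimes f^*)=\ev_Z\circ(f\otimes\id_{Z^*})$, so as to recognize $\Tr^{R}_{Y\otimes X^*}(c_{X^*,Y}\circ c_{Y,X^*})=S^{R,R}_{Y,X^*}$ (an endomorphism of $Y\otimes X^*$, not of $X^*\otimes Y$ as you wrote). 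But in your proposal this viable route is only gestured at, while the route you actually develop rests on a false identity and cannot be repaired; as it stands the proof is not correct.
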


\begin{proof}
  We start with a lemma, which is a direct consequence of
  \cite[Proposition 8.9.1]{egno}.

  \begin{lemma}
    \label{lem:cup-cap-braid}
    Let $\mathcal{C}$ be a braided rigid tensor category. Then for every
    $X$ and $Y$ objects in $\mathcal{C}$ we have:
    \begin{enumerate}
    \item
      $(\ev_X\otimes\id_Y)\circ(\id_{X^*}\otimes
      c_{Y,X})=(\id_Y\otimes\ev_X)\circ(c_{Y,X^*}^{-1}\otimes\id_X)$,
    \item
      $(c_{X,Y}\otimes \id_{Y^*})\circ (\id_X\otimes\coev_Y) =
      (\id_{Y}\otimes c_{X,Y^*}^{-1})\circ (\coev_Y\otimes\id_X)$.
    \end{enumerate}
  \end{lemma}
  
  Now, by definition
  \begin{multline*}
    S^{\rev,L,R}_{X,Y}=(\ev_X\otimes\ev_{Y^*})
    \circ(\id_{X^*\otimes X}\otimes a_Y\otimes\id_{Y^*})
    \circ(\id_{X^*}\otimes c_{X,Y}^{-1}\otimes \id_{Y^*})\\
    \circ(\id_{X^*}\otimes c_{Y,X}^{-1}\otimes \id_{Y^*})
    \circ(\id_{X^*}\otimes a_X^{-1}\otimes\id_{Y\otimes Y^*})
    \circ(\coev_{X^*}\otimes\coev_{Y}).
  \end{multline*}
  By naturality of the braiding and using
  Lemma \ref{lem:cup-cap-braid} we have
  \begin{align*}
    (\ev_X\otimes\id_{Y^{**}})
    \circ(\id_{X^*\otimes X}\otimes a_Y)
    \circ(\id_{X^*}\otimes c_{X,Y}^{-1})
    &=(\ev_X\otimes\id_{Y^{**}})
      \circ(\id_{X^*}\otimes c_{X,Y^{**}}^{-1})
      \circ(\id_{X^*}\otimes a_Y\otimes \id_{X})\\
    &=(\id_{Y^{**}}\otimes\ev_X)
      \circ(c_{X^*,Y^{**}}\otimes\id_X)
      \circ(\id_{X^*}\otimes a_Y\otimes \id_{X}).
  \end{align*}
 Similarly,
  \[
    (\id_{X^{*}}\otimes c_{Y,X}^{-1})
    \circ(\id_{X^{*}}\otimes a_X^{-1}\otimes \id_Y)
    \circ(\coev_{X^*}\otimes \id_Y)
    =
    (\id_{X^*\otimes Y}\otimes a_X^{-1})
    \circ(c_{Y,X^*}\otimes \id_{X^{**}})
    \circ(\id_Y\otimes\coev_{X^*}).
  \]
  Therefore
  \begin{align*}
    S^{\rev,L,R}_{X,Y}
    &=
      \ev_{X\otimes Y^*}
      \circ(c_{X^*,Y^{**}}\otimes\id_{X\otimes Y^*})
      \circ(\id_{X^*}\otimes a_Y\otimes a_X^{-1}\otimes\id_{Y^*})
      \circ(c_{Y,X^*}\otimes \id_{X^{**}\otimes Y^*})
      \circ\coev_{Y\otimes X^*}\\
    &=
      \ev_{X\otimes Y^*}
      \circ(a_Y\otimes \id_X^*\otimes a_X^{-1}\otimes\id_{Y^*})
      \circ(c_{X^*,Y}\otimes\id_{X^{**}\otimes Y^*})
      \circ(c_{Y,X^*}\otimes \id_{X^{**}\otimes Y^*})
      \circ\coev_{Y\otimes X^*}.
  \end{align*}
Finally, using that $(a_X^{-1})^*=a_{X^*}$ and that for any $f\colon
W\rightarrow Z$ we have $\ev_W\circ(\id_W\otimes f^*) =
\ev_Z\circ(f\otimes\id_{Z^*})$, we obtain $S^{\rev,L,R}_{X,Y}=S^{R,R}_{Y,X^*}$.
\end{proof}

\subsection{Twists and Gauss sums}
\label{subsec:twist-gauss}

We now suppose that the category $\mathcal{C}$ is equipped with a twist $\tilde{\theta}$, and the twist associated to the pivotal structure is denoted by $\theta$. On a simple object $X$, the twist is multiplication by a scalar, and we will identify $\tilde{\theta}_X$ with this scalar.

\begin{proposition}
  \label{prop:twist-dim}
  Let $\mathcal{C}$ be a braided pivotal fusion category. We consider $\theta$ the twist associated to the pivotal structure. For any simple object $X$, we have $\theta_{X^*}\dim^R(X)=\theta_Xdim^L(X)$.
\end{proposition}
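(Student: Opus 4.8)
The plan is to compare the two natural isomorphisms $X \to X^{**}$ that are available, namely the pivotal structure $a_X$ and the Drinfeld morphism $u_X$, and to express both quantum dimensions in terms of a single trace computation. Recall that $a = u\theta$, so on a simple object $X$ we have $a_X = \theta_X u_X$ with $\theta_X$ a scalar. The key observation is that the ``pivotal trace of the identity'' computed using $a_X$ is $\dim^R(X)$ or $\dim^L(X)$ (depending on whether one uses the right or left quantum trace), whereas the analogous trace computed using $u_X$ is, up to the twist scalar, the same quantity but now one that behaves symmetrically under dualization because $u$ is canonically defined from the braiding alone.

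Concretely, first I would write out $\dim^R(X) = \Tr^R_X(\id_X)$ as the composition $\1 \xrightarrow{\coev_X} X\otimes X^* \xrightarrow{a_X\otimes\id} X^{**}\otimes X^* \xrightarrow{\ev_{X^*}}\1$, and substitute $a_X = \theta_X u_X$ to pull the scalar $\theta_X$ out front, obtaining $\dim^R(X) = \theta_X \cdot d_X$ where $d_X$ denotes the trace computed with the Drinfeld morphism in place of $a_X$. Next I would do the same for $\dim^L(X^*) = \Tr^L_{X^*}(\id_{X^*})$, which equals $\dim^R(X)$ a priori by the relation $\dim^R(X^*) = \dim^L(X)$ recorded in the excerpt — wait, more precisely I want $\dim^L(X) = \Tr^L_X(\id_X) = \Tr^R_{X^*}(\id_{X^*})$, and substituting $a_{X^*} = \theta_{X^*} u_{X^*}$ gives $\dim^L(X) = \theta_{X^*}\cdot d'_{X}$ for the corresponding Drinfeld-trace. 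The heart of the matter is then to show $d_X = d'_X$: that the trace of $\id_X$ computed via $u_X$ on the ``right'' side equals the trace of $\id_{X^*}$ computed via $u_{X^*}$ on the ``left'' side. This should follow from the defining formula for $u_X$ in terms of $\coev_{X^*}$, $c_{X,X^*}$ and $\ev_X$ together with the identity $u_{X^*} = (u_X^{-1})^*$ (equivalently, the compatibility of the Drinfeld morphism with duals), which is the braided analogue of the relation $a_{X^*} = (a_X^{-1})^*$ used repeatedly in the excerpt; diagrammatically both traces unfold to the ``same'' closed diagram of a single strand twisted around itself.

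The main obstacle will be the bookkeeping in this last step: one must carefully track the distinction between $X^{**}$, $X^{***}$, etc., and the several canonical isomorphisms between $(X^*)^{**}$, $(X^{**})^*$ and so on, exactly as in the proof of Lemma~\ref{lem:trace-mixed-dual}. A cleaner route, which I would prefer if it goes through, is to invoke directly the well-known fact (a consequence of \cite[Proposition 8.9.1 and Exercise 8.10.x]{egno}-type identities, or of the graphical calculus for ribbon-like categories) that for the twist $\theta$ associated to a pivotal structure one has $\theta_{X^*} = \theta_X$ on simple objects whenever $X$ is simple — but here the pivotal structure need not be spherical, so instead the correct general statement is precisely the asymmetry measured by $\dim^L/\dim^R$, and the identity $\theta_{X^*}\dim^R(X) = \theta_X \dim^L(X)$ is exactly what results from $\theta_{X^*}/\theta_X = \dim^L(X)/\dim^R(X)$ after the Drinfeld-trace cancellation. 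So the proof reduces to: (i) express $\dim^R(X)$ and $\dim^L(X)$ via $a = u\theta$, (ii) identify the two residual Drinfeld-morphism traces using $u_{X^*} = (u_X^{-1})^*$, and (iii) rearrange. Steps (i) and (iii) are formal; step (ii) is the graphical identity and is the one I expect to require care.
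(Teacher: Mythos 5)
Your reduction is sound and in fact coincides with the first half of the paper's own argument: writing $a_X=\theta_X u_X$ on a simple object gives $\dim^R(X)=\theta_X d_X$ and $\dim^L(X)=\dim^R(X^*)=\theta_{X^*}d_{X^*}$, where $d_X=\ev_{X^*}\circ(u_X\otimes\id_{X^*})\circ\coev_X=\ev_X\circ c_{X,X^*}\circ\coev_X$, so the proposition is equivalent to $d_X=d_{X^*}$. The gap is in your step (ii): the identity $u_{X^*}=(u_X^{-1})^*$ is false in general. The Drinfeld morphism is not a monoidal natural transformation (it is monoidal only up to the double braiding), so the argument that yields $a_{X^*}=(a_X^{-1})^*$ for a pivotal structure does not transfer to $u$. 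Concretely, combining $a=u\theta$ with $a_{X^*}=(a_X^{-1})^*$ gives, for $X$ simple, $(u_X)^*\circ u_{X^*}=(\theta_X\theta_{X^*})^{-1}\id_{X^*}$; hence your identity holds if and only if $\theta_X\theta_{X^*}=1$, which already fails in any ribbon (spherical) fusion category containing a simple object with $\theta_X^2\neq 1$. Equivalently, if $u_{X^*}=(u_X^{-1})^*$ were true, your computation would prove $\theta_{X^*}=\theta_X^{-1}$ for every simple $X$, contradicting Corollary \ref{cor:spherical-ruban} in the spherical case. Your fallback remark that ``both traces unfold to the same closed diagram'' is exactly the assertion $d_X=d_{X^*}$; without sphericality one cannot freely rotate diagrams, so this is the nontrivial content rather than a justification.

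What is needed in place of step (ii) is the paper's mechanism: first rewrite the scalar as a mixed left/right trace, $\theta_X^{-1}\dim^R(X)=\ev_X\circ c_{X,X^*}\circ\coev_X=\Tr^{L,R}_{X\otimes X}(c_{X,X}^{-1})$, using naturality of the braiding and Lemma \ref{lem:cup-cap-braid}; then apply Lemma \ref{lem:trace-mixed-dual}, which states that the mixed trace is invariant under dualizing the endomorphism, together with $(c_{X,X})^*=c_{X^*,X^*}$. This gives $\theta_X^{-1}\dim^R(X)=\theta_{X^*}^{-1}\dim^R(X^*)=\theta_{X^*}^{-1}\dim^L(X)$, which is the claim. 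In short, the correct substitute for the (false) compatibility of $u$ with duals is the duality-invariance of $\Tr^{L,R}$, and that is where the real work lies.
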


\begin{proof}
  Similarly to \cite[Proposition 8.10.14]{egno}, we have $\dim^R(X)=\theta_X \ev_X\circ c_{X,X^*} \circ \coev_X$. We show that $\Tr^{L,R}_{X\otimes X}(c_{X,X}^{-1})=\ev_X\circ c_{X,X^*} \circ \coev_X$. By definition,
\begin{multline*}
  \Tr^{L,R}_{X\otimes X}(c_{X,X}^{-1})= (\ev_X\otimes \ev_{X^*}) \circ (\id_{X^*\otimes X}\otimes a_X\otimes \id_{X^*}) \circ (\id_{X^*}\otimes c_{X,X}^{-1} \otimes \id_{X^*}) \\\circ (\id_{X^*}\otimes a_X^{-1}\otimes \id_{X\otimes X^*})\circ (\coev_{X^*}\otimes \coev_X). 
\end{multline*}
  Using the naturality of the braiding, we get rid of the pivotal structure: 
\[
   \Tr^{L,R}_{X\otimes X}(c_{X,X}^{-1})= (\ev_X\otimes \ev_{X^*}) \circ (\id_{X^*}\otimes c_{X,X^{**}}^{-1} \otimes \id_{X^*}) \circ (\coev_{X^*}\otimes \coev_X).
\]
From the Lemma \ref{lem:cup-cap-braid} we deduce that $(\id_X\otimes \ev_{X^*})\circ(c_{X,X^{**}}^{-1}\otimes \id_{X^*}) = (\ev_{X^*}\otimes \id_X)\circ(\id_{X^{**}}\otimes c_{X,X^*})$. Therefore, using the properties of the duality, we obtain $\Tr^{L,R}_{X\otimes X}(c_{X,X}^{-1})=\ev_X\circ c_{X,X^*} \circ \coev_X$, which leads to the conclusion that $\dim^R(X) = \theta_X\Tr^{L,R}_{X\otimes X}(c_{X,X}^{-1})$.

But the Lemma \ref{lem:trace-mixed-dual} shows that $\Tr_{X^*\otimes X^*}^{L,R}((c_{X,X}^{-1})^*) = \Tr_{X\otimes X}^{L,R}(c_{X,X}^{-1})$ and as $(c_{X,X})^* = c_{X^*,X^*}$ (\emph{c.f.} \cite[Exercise 8.9.2]{egno}) we finally obtain
\[
  \theta_X^{-1}\dim^R(X) = \Tr^{L,R}_{X\otimes X}(c_{X,X}^{-1})= \Tr^{L,R}_{X^*\otimes X^*}(c_{X^*,X^*}^{-1})= \theta_{X^*}^{-1}\dim^R(X^*),
\]
as expected.
\end{proof}

Therefore, for a simple object $X$, its left and right quantum dimensions are equal if and only if $\theta_{X^*}=\theta_X$. We then recover the following well known result \cite[Appendix A.4]{henriques-penneys-tener}.

\begin{corollary}
  \label{cor:spherical-ruban}
  Let $\mathcal{C}$ be a braided pivotal fusion category. The pivotal structure is spherical if and only if the associated twist $\theta$ is a ribbon, that is satisfies $\theta_{X^*}=(\theta_X)^*$ for any object $X$.
\end{corollary}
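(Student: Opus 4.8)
The plan is to combine Proposition \ref{prop:twist-dim} with the characterization of sphericity in terms of equality of left and right quantum dimensions. Recall that a pivotal structure $a$ on a fusion category is spherical precisely when $\dim^L(X) = \dim^R(X)$ for every object $X$; since both sides are additive on direct sums, it suffices to check this on simple objects. So the statement to prove reduces to: the associated twist $\theta$ satisfies $\theta_{X^*} = (\theta_X)^*$ for all objects $X$ if and only if $\dim^L(X) = \dim^R(X)$ for all simple $X$.

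First I would unwind what the ribbon condition $\theta_{X^*} = (\theta_X)^*$ means on a simple object. If $X$ is simple, then $\theta_X$ is a scalar (identified as such in the paragraph preceding Proposition \ref{prop:twist-dim}), and the dual of multiplication by a scalar $\lambda$ on $X$ is multiplication by the same scalar $\lambda$ on $X^*$. Hence on simple objects the ribbon condition is exactly $\theta_{X^*} = \theta_X$ (as scalars). Conversely, if $\theta_{X^*} = \theta_X$ for all simple $X$, then since every object is a direct sum of simples and the twist is natural, the ribbon condition $\theta_{X^*} = (\theta_X)^*$ holds for all objects; this direction uses only that naturality propagates the identity from simple summands to arbitrary objects, together with the fact that duality is additive.

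Now I would invoke Proposition \ref{prop:twist-dim}, which gives $\theta_{X^*}\dim^R(X) = \theta_X \dim^L(X)$ for every simple $X$. Since $\theta_X$ is a nonzero scalar (it is an isomorphism) and $\dim^R(X)$ is nonzero for a simple object in a braided pivotal fusion category (as recalled in the hypothesis box, via \cite[Proposition 4.8.4]{egno}), we may rearrange: $\dim^L(X)/\dim^R(X) = \theta_{X^*}/\theta_X$. Therefore $\dim^L(X) = \dim^R(X)$ if and only if $\theta_{X^*} = \theta_X$, for each simple $X$. Chaining this with the first reduction (sphericity $\Leftrightarrow$ $\dim^L = \dim^R$ on all simples) and the second (ribbon $\Leftrightarrow$ $\theta_{X^*} = \theta_X$ on all simples) yields the corollary.

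I do not expect a genuine obstacle here: the content is entirely carried by Proposition \ref{prop:twist-dim}, and what remains is bookkeeping. The only point requiring a little care is the passage between "ribbon on all objects" and "$\theta_{X^*} = \theta_X$ on simples" — one must make sure that checking the condition on simples really does suffice, which follows because $\Irr(\mathcal{C})$ generates the category under direct sums and both $\theta$ and $(-)^*$ respect this decomposition — and the analogous remark for sphericity, which is the definition of spherical combined with additivity of the quantum traces $\Tr^{L}$ and $\Tr^{R}$.
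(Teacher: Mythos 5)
Your proof is correct and follows essentially the same route as the paper: reduce both the sphericity condition and the ribbon condition to simple objects via semisimplicity (noting that $(\theta_X)^*$ acts on $X^*$ by the same scalar as $\theta_X$), and then conclude from Proposition \ref{prop:twist-dim} together with the nonvanishing of $\theta_X$ and $\dim^R(X)$ on simples. The paper's proof is just a terser version of this same bookkeeping.
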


\begin{proof}
  The category being semisimple, for an object $X=\bigoplus_{Z \in \Irr(\mathcal{C})}Z^{\oplus n_Z}$, we have $\dim^{R}(X) = \sum_{Z\in\Irr(\mathcal{C})} n_Z\dim^{R}(Z)$, $\dim^{L}(X) = \sum_{Z\in\Irr(\mathcal{C})} n_Z\dim^{L}(Z)$, $\theta_{X^*} = \bigoplus_{Z\in\Irr(\mathcal{C})}(\theta_{Z^*}\id_{Z^*})^{\oplus n_Z}$ and $(\theta_X)^* = \bigoplus_{Z\in\Irr(\mathcal{C})}(\theta_Z\id_{Z^*})^{\oplus n_Z}$. The result follows immediately from the Proposition \ref{prop:twist-dim}.
\end{proof}

\begin{definition}
  Let $\mathcal{C}$ be a braided pivotal fusion category equipped with a twist $\tilde{\theta}$. The \emph{Gauss sums} of the category $\mathcal{C}$ are
  \[
  \tau^{\pm}(\mathcal{C},\tilde{\theta}) := \sum_{X\in\Irr(\mathcal{C})}\tilde{\theta}_X^{\pm}\lvert X \rvert^2.
\]
  If the twist $\tilde{\theta}$ is the one obtained from the pivotal structure, we simply denote these sums by $\tau^{\pm}(\mathcal{C})$.
\end{definition}

The relation $\tilde{\theta}_{X\otimes Y} = (\tilde{\theta}_X\otimes\tilde{\theta}_Y) \circ c_{Y,X}\circ c_{X,Y}$ gives, by taking the right quantum trace,
\begin{equation}
  \tilde{\theta}_X\tilde{\theta}_YS_{X,Y}^{R,R} = \sum_{Z\in\Irr(\mathcal{C})}N_{X,Y}^Z\dim^R(Z)\tilde{\theta}_Z,\label{eq:twist-S-matrice}
\end{equation}
for $X$ and $Y$ simple objects of $\mathcal{C}$.

\begin{lemma}
  \label{lem:twist-tau+}
  Let $Y$ be a simple object of a braided pivotal fusion category $\mathcal{C}$ equipped with a twist $\tilde{\theta}$. Then
  \begin{equation}
  \sum_{X\in\Irr(\mathcal{C})}\tilde{\theta}_X\dim^{L}(X)S^{R,R}_{X,Y} = \tilde{\theta}_Y^{-1}\dim^R(Y)\tau^+(\mathcal{C},\tilde{\theta}).\label{eq:twist-tau+}
  \end{equation}
\end{lemma}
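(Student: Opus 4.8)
The plan is to derive the identity directly from equation \eqref{eq:twist-S-matrice} together with standard facts about quantum dimensions. First I would rewrite \eqref{eq:twist-S-matrice} as $\tilde\theta_X S^{R,R}_{X,Y} = \tilde\theta_Y^{-1}\sum_{Z}N_{X,Y}^Z\dim^R(Z)\tilde\theta_Z$ — legitimate since $\tilde\theta_Y$ acts as an invertible scalar on the simple object $Y$ — and substitute this into the left-hand side of \eqref{eq:twist-tau+}. Pulling out the $X$-independent factor $\tilde\theta_Y^{-1}$ and exchanging the order of summation gives
\[
\sum_{X\in\Irr(\mathcal{C})}\tilde\theta_X\dim^L(X)S^{R,R}_{X,Y} = \tilde\theta_Y^{-1}\sum_{Z\in\Irr(\mathcal{C})}\dim^R(Z)\,\tilde\theta_Z\Bigl(\sum_{X\in\Irr(\mathcal{C})}N_{X,Y}^Z\dim^L(X)\Bigr).
\]

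The heart of the argument is the evaluation of the inner sum $\sum_X N_{X,Y}^Z\dim^L(X)$. Here I would use Frobenius reciprocity for the left dual: since $N_{X,Y}^Z = \dim\Hom_{\mathcal{C}}(X\otimes Y,Z) = \dim\Hom_{\mathcal{C}}(X,Z\otimes Y^*) = N_{Z,Y^*}^X$, the inner sum equals $\sum_X N_{Z,Y^*}^X\dim^L(X)$, which is $\dim^L(Z\otimes Y^*)$ because $\dim^L$ is additive on direct sums and $[Z\otimes Y^*]=\sum_X N_{Z,Y^*}^X[X]$ in $\Gr(\mathcal{C})$. Using that $\dim^L$ descends to a ring homomorphism $\Gr(\mathcal{C})\to\Bbbk$ (the pivotal structure $a$ being a monoidal natural isomorphism) and that $\dim^L(Y^*)=\dim^R(Y)$ — which follows from $\dim^R(X^*)=\dim^L(X)$ applied to $X=Y^*$ together with $\dim^R(Y^{**})=\dim^R(Y)$, the latter by cyclicity of the quantum trace along $a_Y\colon Y\xrightarrow{\sim}Y^{**}$ — one gets $\sum_X N_{X,Y}^Z\dim^L(X)=\dim^L(Z)\dim^R(Y)$.

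Plugging this back in, the right-hand side becomes $\tilde\theta_Y^{-1}\dim^R(Y)\sum_{Z}\tilde\theta_Z\dim^R(Z)\dim^L(Z)$, and since $\lvert Z\rvert^2 = \dim^R(Z)\dim^L(Z)$ the remaining sum over $Z$ is by definition $\tau^+(\mathcal{C},\tilde\theta)$, which completes the proof. I expect the only subtle point to be the bookkeeping of left versus right quantum dimensions in this non-spherical setting: one must use the left dual of $Y$ in the reciprocity step so that the $\dim^L(X)$ factors reassemble as $\dim^L(Z\otimes Y^*)$, and one must take care not to conflate $\dim^L(Y^*)$ with $\dim^L(Y)$; the rest is formal manipulation in $\Gr(\mathcal{C})$.
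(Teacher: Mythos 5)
Your proof is correct and follows essentially the same route as the paper: both start from \eqref{eq:twist-S-matrice}, exchange the order of summation, and evaluate the inner sum by dualizing the fusion coefficients and using multiplicativity of quantum dimensions (the paper writes $N_{X,Y}^Z=N_{Z^*,Y}^{X^*}$ with $\dim^R(X^*)=\dim^L(X)$, whereas you use $N_{X,Y}^Z=N_{Z,Y^*}^X$ and $\dim^L(Y^*)=\dim^R(Y)$, which is the same computation up to relabeling by the duality bijection). Your care with $\dim^L$ versus $\dim^R$ in the non-spherical setting matches what the paper implicitly does.
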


\begin{proof}
  The proof is essentially the same as the one of \cite[Lemma 8.15.2]{egno}. Using \eqref{eq:twist-S-matrice}, we have
  \begin{align*}
    \sum_{X\in\Irr(\mathcal{C})}\tilde{\theta}_X\dim^{L}(X)S^{R,R}_{X,Y}
    &= \tilde{\theta}_Y^{-1}\sum_{X,Z\in\Irr(\mathcal{C})}N_{X,Y}^Z\dim^R(Z)\dim^L(X)\tilde{\theta}_Z\\
    &= \tilde{\theta}_Y^{-1}\sum_{Z\in\Irr(\mathcal{C})}\dim^R(Z)\tilde{\theta}_Z\sum_{X\in\Irr(\mathcal{C})}N_{Z^*,Y}^{X^*}\dim^R(X^*)\\
    &= \tilde{\theta}_Y^{-1}\dim^R(Y)\sum_{Z\in\Irr(\mathcal{C})}\tilde{\theta}_Z\lvert Z\rvert^2.
  \end{align*}
\end{proof}

We have a similar formula for $\theta^{-1}$ using simultaneously the Propositions \ref{prop:rev-S-mat} and \ref{prop:twist-dim}.

\begin{lemma}
  \label{lem:twist+tau-}
  Let $Y$ be a simple object of a braided pivotal fusion category $\mathcal{C}$ equipped with the twist $\theta$ associated to the pivotal structure. Then
  \begin{equation}
    \sum_{X\in\Irr(\mathcal{C})}\theta^{-1}_X\dim^R(X)S^{R,R}_{X,Y} = \theta_Y\dim^R(Y)\tau^-(\mathcal{C}).\label{eq:twist+tau-}
  \end{equation}
\end{lemma}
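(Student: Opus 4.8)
The plan is to deduce \eqref{eq:twist+tau-} from Lemma \ref{lem:twist-tau+} applied to the reversed category $\mathcal{C}^{\rev}$, and then to translate the resulting identity back to $\mathcal{C}$ using Propositions \ref{prop:rev-S-mat} and \ref{prop:twist-dim}. First I would record that $\theta^{-1}$ is a twist on $\mathcal{C}^{\rev}$: since $c^{\rev}_{Y,X}\circ c^{\rev}_{X,Y}=(c_{Y,X}\circ c_{X,Y})^{-1}$ and $\theta$ is a natural automorphism of the identity functor (so that $\theta_{X\otimes Y}$ commutes with $\theta_X\otimes\theta_Y$), the twist axiom for $\theta$ in $\mathcal{C}$ transforms into the twist axiom for $\theta^{-1}$ in $\mathcal{C}^{\rev}$. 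Note also that $\mathcal{C}^{\rev}$ is again a braided pivotal fusion category, for the very same pivotal structure, hence it has the same left and right quantum dimensions and the same squared norms $\lvert X\rvert^2$ as $\mathcal{C}$.

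Next I would apply Lemma \ref{lem:twist-tau+} in $\mathcal{C}^{\rev}$ with the twist $\tilde{\theta}=\theta^{-1}$. Since $\tau^{+}(\mathcal{C}^{\rev},\theta^{-1})=\sum_{X\in\Irr(\mathcal{C})}\theta_X^{-1}\lvert X\rvert^2=\tau^{-}(\mathcal{C})$ and $(\theta^{-1}_Y)^{-1}=\theta_Y$, this gives
\[
  \sum_{X\in\Irr(\mathcal{C})}\theta^{-1}_X\dim^L(X)S^{\rev,R,R}_{X,Y}=\theta_Y\dim^R(Y)\tau^{-}(\mathcal{C}).
\]
To rewrite the left-hand side I would first use the relation between the three $S$-matrices (valid in $\mathcal{C}^{\rev}$ as well) in the form $\dim^L(X)S^{\rev,R,R}_{X,Y}=\dim^R(X)S^{\rev,L,R}_{X,Y}$, and then Proposition \ref{prop:rev-S-mat}, which gives $S^{\rev,L,R}_{X,Y}=S^{R,R}_{Y,X^*}$. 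This turns the identity into
\[
  \sum_{X\in\Irr(\mathcal{C})}\theta^{-1}_X\dim^R(X)S^{R,R}_{Y,X^*}=\theta_Y\dim^R(Y)\tau^{-}(\mathcal{C}).
\]

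Finally I would reindex the sum along the bijection $X\mapsto X^*$ of $\Irr(\mathcal{C})$. By Proposition \ref{prop:twist-dim} one has $\theta_{X^*}\dim^R(X)=\theta_X\dim^L(X)$, equivalently $\theta^{-1}_{X^*}\dim^R(X^*)=\theta^{-1}_{X^*}\dim^L(X)=\theta^{-1}_X\dim^R(X)$; combined with $S^{R,R}_{Y,X^{**}}=S^{R,R}_{Y,X}=S^{R,R}_{X,Y}$ (using that $S^{R,R}$ is symmetric and depends only on isomorphism classes), the reindexing converts the left-hand side into $\sum_{X}\theta^{-1}_X\dim^R(X)S^{R,R}_{X,Y}$, which is \eqref{eq:twist+tau-}.

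The argument is essentially bookkeeping; the one point that requires care is the third step, where Proposition \ref{prop:rev-S-mat} yields $S^{\rev,L,R}$ rather than $S^{\rev,R,R}$, so one must insert the dimension ratio $\dim^R(X)/\dim^L(X)$ and keep track of the transposition and the dual of the indices that $S^{\rev,L,R}_{X,Y}=S^{R,R}_{Y,X^*}$ introduces. These are then exactly absorbed by the reindexing and Proposition \ref{prop:twist-dim} in the last step, which is precisely why the statement is phrased with $\theta$ the twist coming from the pivotal structure (so that Proposition \ref{prop:twist-dim} is available), unlike the more flexible Lemma \ref{lem:twist-tau+}.
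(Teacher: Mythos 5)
Your proof is correct and follows essentially the same route as the paper: apply Lemma \ref{lem:twist-tau+} in $\mathcal{C}^{\rev}$ with the twist $\theta^{-1}$, convert $S^{\rev,R,R}$ to $S^{R,R}$ with dualized index via the dimension-ratio relation and Proposition \ref{prop:rev-S-mat}, and then reindex along $X\mapsto X^*$ using Proposition \ref{prop:twist-dim}. The only difference is that you spell out a couple of steps the paper leaves implicit (that $\theta^{-1}$ is a twist on $\mathcal{C}^{\rev}$, and the passage $\dim^L(X)S^{\rev,R,R}_{X,Y}=\dim^R(X)S^{\rev,L,R}_{X,Y}$), which is harmless.
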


\begin{proof}
  Using the fact that $\theta^{\rev} = \theta^{-1}$ is a twist for the category $\mathcal{C}^{\rev}$, we deduce from Lemma \ref{lem:twist-tau+} that
  \[
    \sum_{X\in\Irr(\mathcal{C})}\theta^{\rev}_X\dim^L(X)S^{\rev,R,R}_{X,Y} = (\theta^{\rev}_Y)^{-1}\dim^R(Y)\tau^+(\mathcal{C}^{\rev},\theta^{\rev}).
  \]
  From the proposition \ref{prop:rev-S-mat}, one has $\dim^L(X)S^{\rev,R,R}_{X,Y}=\dim^R(X)S^{R,R}_{X^*,Y}$ and as $\mathcal{C}$ and $\mathcal{C}^{\rev}$ have the same simple objects, we have $\tau^+(\mathcal{C}^{\rev},\theta^{\rev}) = \tau^-(\mathcal{C})$. Therefore
  \begin{align*}
    \theta_Y\dim^R(Y)\tau^-(\mathcal{C})
    &= \sum_{X\in\Irr(\mathcal{C})}\theta^{-1}_X\dim^L(X)S^{\rev,R,R}_{X,Y}\\
    &= \sum_{X\in\Irr(\mathcal{C})}\theta^{-1}_X\dim^R(X)S^{R,R}_{X^*,Y}\\
    &= \sum_{X\in\Irr(\mathcal{C})}\theta^{-1}_{X^*}\dim^L(X)S^{R,R}_{X^*,Y},
  \end{align*}
the last equality being the Proposition \ref{prop:twist-dim}. As $X\mapsto X^*$ is a bijection of $\Irr(\mathcal{C})$, we conclude using $\dim^L(X)=\dim^R(X^*)$.
\end{proof}

\subsection{Nondegenerate braided pivotal categories}
\label{sec:nondeg}

It is well known that a modular category gives rise to a projective representation of $SL_2(\mathbb{Z})$. We aim to generalize this result to categories with a pivotal structure which is not necessarily spherical.

\boitegrise{{\bf Hypothesis.} {\emph{In this section, we suppose that the category $\mathcal{C}$ is a nondegenerate braided pivotal fusion category.}}}{0.8\textwidth}

All ring homomorphisms $\Gr(\mathcal{C})\rightarrow \Bbbk$ are then of
the form $s_{X}^R$ for $X$ a simple object of $\mathcal{C}$. Indeed,
such a ring homomorphism is the same as a linear character of the
$\Bbbk$-algebra $\Bbbk\otimes_{\mathbb{Z}}\Gr(\mathcal{C})$, which is
semi-simple: if
$e_X=(\sum_{Y\in\Irr(\mathcal{C})}s^R_X(Y)s^R_X(Y^*))^{-1}\sum_{Y\in\Irr(\mathcal{C})}s^R_X(Y^*)[Y]$
is the idempotent associated to $s^R_X$ then
$\Bbbk\otimes_{\mathbb{Z}}\Gr(\mathcal{C})=\bigoplus_{X\in\Irr(\mathcal{C})}\Bbbk
e_X$ since $s_X^R=s_Y^R$ if and only if $X$ and $Y$ are isomorphic
($\sum_{Y\in\Irr(\mathcal{C})}s^R_X(Y)s^R_X(Y^*)$ is non-zero by
\cite[Lemma 8.20.9]{egno}). The map $Y\mapsto s_{X}^R(Y^*)$ is a character of $\Gr(\mathcal{C})$ hence equal to $s_{\bar{X}}^R$ for some $\bar{X}\in\Irr(\mathcal{C})$. This defines an involution $\bar{~}$ on $\Irr(\mathcal{C})$. Note that if the pivotal structure is spherical, this involution is nothing more than the duality. 

\begin{proposition}
  \label{prop:1barinv}
  The object $\bar{\1}$ is invertible and $\bar{X}\simeq X^*\otimes \bar{\1}$.
\end{proposition}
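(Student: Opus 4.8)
The plan is to understand the character $s^R_{\bar\1}$ concretely and show it is multiplicative exactly as the character of an invertible object, then deduce the isomorphism $\bar X \simeq X^* \otimes \bar\1$ from the multiplicativity of $X \mapsto s^R_{\bar X}$. First I would recall that, by definition of the involution, $s^R_{\bar\1}(Y) = s^R_{\1}(Y^*) = \dim^R(Y^*) = \dim^L(Y)$ for every simple $Y$, using that $s^R_{\1,Y} = (\id_\1\otimes\Tr^R_Y)(c_{Y,\1}\circ c_{\1,Y}) = \dim^R(Y)$ since the double braiding with the unit is trivial. So the character attached to $\bar\1$ is the left-dimension character $Y\mapsto\dim^L(Y)$. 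Since $\dim^L$ is already known to be a ring homomorphism $\Gr(\mathcal C)\to\Bbbk$ (it is $\dim^R$ precomposed with the duality antiautomorphism, or directly $s^R_{\bar\1}$), this is consistent; the point is to extract invertibility of $\bar\1$ from it.

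For invertibility, the key computation is $\sum_{Y\in\Irr(\mathcal C)} s^R_{\bar\1}(Y)\,s^R_{\bar\1}(Y^*) = \sum_Y \dim^L(Y)\dim^L(Y^*) = \sum_Y \dim^L(Y)\dim^R(Y) = \sum_Y |Y|^2 = \dim(\mathcal C)$. On the other hand, for a general simple $X$ the quantity $\sum_Y s^R_X(Y)s^R_X(Y^*)$ equals $\dim(\mathcal C)/|X|^2$ — this is the nondegeneracy statement behind \cite[Lemma 8.20.9]{egno} and the computation of the idempotents $e_X$ recalled just before the proposition; equivalently it is the orthogonality of the columns of the $S$-matrix. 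Comparing, $|\bar\1|^2 = 1$, so $\dim^R(\bar\1)\dim^L(\bar\1) = 1$. Since all dimensions are nonzero (simple objects), $\bar\1$ is invertible: an object $Z$ in a fusion category is invertible iff $|Z|^2 = 1$ (its dimension function is then a linear character, equivalently $Z\otimes Z^*\simeq\1$ because the only simple summand can be $\1$ and it appears once as $\dim^R(Z)\dim^R(Z^*) = |Z|^2 = 1$).

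It remains to identify $\bar X$. Apply $Y\mapsto (-)^*$ inside the defining relation: $s^R_{\bar X}(Y) = s^R_X(Y^*)$, so I compute $s^R_X(Y^*)$ another way. Using that $s^R_X$ is a ring homomorphism and that $[Y^*]$ is the unique basis element with $N_{Y,Y^*}^{\1}=1$, together with $s^R_{X,Y} = s^L_{Y,X}$-type symmetries and the relation $S^{L,R}_{X^*,Y^*}=S^{L,R}_{Y,X}$ from the Remark, one gets $s^R_X(Y^*)$ expressed via $s^R_{X^*}$. Concretely, since $s^R_X(Y) = S^{R,R}_{X,Y}/\dim^R(X)$ and $S^{R,R}_{X^*,Y^*}=S^{L,L}_{X,Y} = \tfrac{\dim^R(Y)\dim^L(X)}{\dim^R(X)\dim^L(Y)}S^{R,R}_{X,Y}$, a short manipulation yields $s^R_X(Y^*) = s^R_{X^*}(Y)\cdot\dfrac{\dim^L(Y)}{\dim^R(Y)}\cdot(\text{a factor independent of }Y\text{, forced to be }1\text{ by }Y=\1)$; and $\dim^L(Y)/\dim^R(Y) = \dim^L(Y)/\dim^L(Y^*)$, which is exactly the character ratio $s^R_{\bar\1}(Y)/s^R_{\bar\1}(Y^*)$. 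Thus $s^R_{\bar X} = s^R_{X^*}\cdot s^R_{\bar\1}$ as characters (using invertibility of $\bar\1$ to make sense of and simplify the ratio), i.e. $s^R_{\bar X} = s^R_{X^*\otimes\bar\1}$ since tensoring by an invertible object multiplies the dimension character. Because $s^R_{(-)}$ separates simple objects (the hypothesis of this section), this forces $\bar X\simeq X^*\otimes\bar\1$.

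I expect the main obstacle to be the bookkeeping in the last paragraph: getting the precise relation between $s^R_X(Y^*)$, $s^R_{X^*}(Y)$ and the dimension ratios right, keeping track of left versus right quantum traces throughout, and confirming the extra scalar factor is trivial (pin it down by evaluating at $Y=\1$). The invertibility step is clean once the orthogonality relation $\sum_Y s^R_X(Y)s^R_X(Y^*) = \dim(\mathcal C)/|X|^2$ is invoked, so the real work is purely in the non-spherical dual-versus-star combinatorics.
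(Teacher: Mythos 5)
Your identification $s^R_{\bar{\1}}=\dim^L$ is correct, and your last step (relating $s^R_X(Y^*)$ to $s^R_{X^*}(Y)\,\dim^L(Y)/\dim^R(Y)$, recognizing this as $s^R_{X^*\otimes\bar{\1}}(Y)$, and separating simple objects by nondegeneracy) is essentially the computation the paper performs, up to repairable slips: your ratio for $S^{L,L}_{X,Y}$ has the $Y$-factor inverted (the correct relation is $S^{L,L}_{X,Y}=\frac{\dim^L(X)\dim^L(Y)}{\dim^R(X)\dim^R(Y)}S^{R,R}_{X,Y}$), and ``$s^R_{\bar X}=s^R_{X^*}\cdot s^R_{\bar{\1}}$'' only makes sense in the normalized form $s^R_{X^*\otimes\bar{\1}}(Y)=s^R_{X^*}(Y)s^R_{\bar{\1}}(Y)/\dim^R(Y)$, a pointwise product of characters not being a character. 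The genuine gap is in the invertibility step. You invoke, for every simple $X$, the value $\sum_Y s^R_X(Y)s^R_X(Y^*)=\dim(\mathcal C)/\lvert X\rvert^2$, attributing it to \cite[Lemma 8.20.9]{egno} or to column orthogonality of the $S$-matrix. Neither is available here: the EGNO statement belongs to the spherical (pre-modular) setting and is used in this paper only for the \emph{non-vanishing} of that sum, while unitarity/column orthogonality is only obtained later (Corollary \ref{cor:N-modular}, over $\mathbb C$) as a consequence of Proposition \ref{prop:square_S}, which itself rests on the proposition you are proving. Concretely, $\sum_Y s^R_X(Y)s^R_X(Y^*)=\frac{(S^{R,R})^2_{X,\bar X}}{\dim^R(X)\dim^R(\bar X)}=\frac{N_{X,\bar X}^{\bar{\1}}\dim^R(\bar{\1})\dim(\mathcal C)}{\dim^R(X)\dim^R(\bar X)}$, so asserting the value $\dim(\mathcal C)/\lvert X\rvert^2$ amounts to asserting $N_{X,\bar X}^{\bar{\1}}=1$ and $\dim^R(\bar X)=\dim^L(X)\dim^R(\bar{\1})$, i.e.\ precisely the content of the proposition; at this point your argument is circular.

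Moreover, even granting $\lvert\bar{\1}\rvert^2=1$, your justification that this forces invertibility (``the only simple summand of $Z\otimes Z^*$ can be $\1$ since $\dim^R(Z)\dim^R(Z^*)=1$'') is a positivity argument: it needs the other simple summands to contribute positively to $\dim^R$, which is exactly what fails for a non-spherical pivotal structure (right dimensions of simples need not be positive, and the statement is over an arbitrary algebraically closed $\Bbbk$ of characteristic $0$, where no unitarity is available). The paper avoids both problems by a structural argument: by \cite[Proposition 8.22.6]{egno}, $\mathcal Z_{\mathcal C}(\mathcal C_{\mathrm{ad}})=(\mathcal Z_{\mathrm{sym}}(\mathcal C))^{\mathrm{co}}$, which is the pointed subcategory since $\mathcal Z_{\mathrm{sym}}(\mathcal C)$ is trivial; then one checks $\bar{\1}\in\mathcal Z_{\mathcal C}(\mathcal C_{\mathrm{ad}})$ because left and right dimensions agree on $\mathcal C_{\mathrm{ad}}$ (the slope is trivial there), so $S^{R,R}_{\bar{\1},Y}=\dim^R(\bar{\1})\dim^R(Y)$ for $Y\in\mathcal C_{\mathrm{ad}}$ and Proposition \ref{prop:smat_sym} applies. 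If you want to keep your codegree strategy, you must prove the value $\dim(\mathcal C)/\lvert X\rvert^2$ (or at least $\lvert\bar{\1}\rvert^2=1$ together with a squared-norm criterion for invertibility valid over $\Bbbk$) independently, which is not easier than the paper's route.
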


\begin{proof}
Let $\mathcal{C}_{\mathrm{ad}}$ be the adjoint subcategory of $\mathcal{C}$ \cite[Definition 4.14.5]{egno}: it is the smallest tensor Serre subcategory of $\mathcal{C}$ containing all $X\otimes X^*$ for $X$ simple. For $\mathcal{K}$ a subcategory of $\mathcal{C}$ we denote by $\mathcal{K}^{\mathrm{co}}$ the commutator of $\mathcal{K}$ \cite[Definition 4.14.10]{egno}: it is the Serre tensor subcategory generated by all simple objects $X$ of $\mathcal{C}$ such that $X\otimes X^*\in\mathcal{K}$. Applying \cite[Proposition 8.22.6]{egno} to $\mathcal{K}=\mathcal{C}$, we obtain that $\mathcal{Z}_{\mathcal{C}}(\mathcal{C}_{\mathrm{ad}}) = (\mathcal{Z}_{\mathrm{sym}}(\mathcal{C}))^{\mathrm{co}}$, where $\mathcal{Z}_{\mathcal{C}}(\mathcal{C}_{\mathrm{ad}})$ is the centralizer of $\mathcal{C}_{\mathrm{ad}}$ in $\mathcal{C}$ \cite[Definition 8.20.1]{egno}. As $\mathcal{Z}_{\mathrm{sym}}(\mathcal{C})$ is generated by $\1$, we obtain that $(\mathcal{Z}_{\mathrm{sym}}(\mathcal{C}))^{\mathrm{co}}$ is the pointed subcategory of $\mathcal{C}$.

Now, let us show that $\bar{\1}$ is indeed in $\mathcal{Z}_{\mathcal{C}}(\mathcal{C}_{\mathrm{ad}})$. By \cite[Proposition 8.20.5]{egno}, it suffices to show that $S^{R,R}_{\bar{\1},Y}=\dim^R(\bar{\1})\dim^R(Y)$ for all simple objects $Y$ in $\mathcal{C}_{\mathrm{ad}}$. By definition of $\bar{\1}$, we have $S^{R,R}_{\bar{\1},Y}=\dim^R(\bar{\1})\dim^L(Y)$. Now, we consider the slope, which is an tensor autofunctor of $\id_{\mathcal{C}}$ \cite[Section 4.C]{bruguieres-virelizier}. On a simple object $X$ it is defined as:
\[
  \mathrm{Sl}_X := \frac{\dim^L(X)}{\dim^R(X)}\id_X.
\]
It is now immediate that for all $Y$ in $\mathcal{C}_{\mathrm{ad}}$ we have $\mathrm{Sl}_Y=\id_Y$ since $\dim^R(X^*)=\dim^L(X)$ for any simple object $X$. This shows that for any simple object $Y$ in $\mathcal{C}_{\mathrm{ad}}$ the left and right quantum dimensions coincide and therefore $S^{R,R}_{\bar{\1},Y}=\dim^R(\bar{\1})\dim^R(Y)$. Hence, we can conclude that $\bar{\1}$ is an invertible object.

  Now, as $\bar{\1}$ is invertible, $X\otimes \bar{\1}$ is simple for any simple object $X$. Showing that $s_{X^*\otimes \bar{\1}}^R(Y) = s_{X}^R(Y^*)$ for any simple object $Y$ ends the proof:
  \begin{align*}
    s^R_{X^*\otimes\bar{\1}}(Y)=\frac{\dim^R(Y)}{\dim^R(X^*\otimes \bar{\1})}s^R_{Y}(X^*\otimes\bar{\1})&=\frac{\dim^R(Y)}{\dim^R(X^*\otimes \bar{\1})}s^R_{Y}(X^*)s_Y^R(\bar{\1})\\
    &=\frac{S^{R,R}_{Y,X^*}}{\dim^R(X^*)\dim^R(Y)}s^R_{\bar{\1}}(Y)\\
    &=\frac{S^{L,L}_{Y^*,X}}{\dim^R(X^*)\dim^R(Y)}\dim^R(Y^*)\\
    &=\frac{S^{R,R}_{Y^*,X}}{\dim^R(X)}\\
    &=s^R_X(Y^*).
  \end{align*}
\end{proof}

\begin{corollary}
\label{cor:symmetry-bar}
 Under the same hypothesis, for any simple objects $X$ and $Y$ we have $S^{R,R}_{\bar{X},Y}=S^{R,R}_{X,\bar{Y}}$.
\end{corollary}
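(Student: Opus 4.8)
The plan is to unwind both sides into expressions involving the ring homomorphisms $s^R_{-}\colon\Gr(\mathcal{C})\to\Bbbk$, use Proposition \ref{prop:1barinv} to replace $\bar X$ by $X^*\otimes\bar\1$, and then reduce the identity to the symmetry of the matrix $(S^{L,R}_{X,Y^*})_{X,Y}$ recorded in the remark after Proposition \ref{prop:smat_sym}.

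First I would record the two elementary facts I need: that $S^{R,R}_{X,Y}=\dim^R(X)\,s^R_X(Y)$ for $X$ simple (since $\End_{\mathcal{C}}(X)=\Bbbk$) and that $S^{R,R}$ is symmetric. Next I would pin down the scalar $s^R_Y(\bar\1)$. From $s^R_{\bar\1}(Y)=s^R_{\1}(Y^*)=\dim^R(Y^*)=\dim^L(Y)$ (using that $\1$ lies in the symmetric center), together with $\dim^R(\bar\1)\,s^R_{\bar\1}(Y)=S^{R,R}_{\bar\1,Y}=S^{R,R}_{Y,\bar\1}=\dim^R(Y)\,s^R_Y(\bar\1)$, one gets $s^R_Y(\bar\1)=\dim^R(\bar\1)\dim^L(Y)/\dim^R(Y)$; equivalently $\dim^R(\bar X)=\dim^R(X^*\otimes\bar\1)=\dim^L(X)\dim^R(\bar\1)$.

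Then the computation runs as follows, using $\bar X\simeq X^*\otimes\bar\1$ and that $s^R_Y$ is a ring homomorphism:
\begin{align*}
S^{R,R}_{\bar X,Y}=S^{R,R}_{Y,\bar X}=\dim^R(Y)\,s^R_Y(X^*\otimes\bar\1)
&=\dim^R(Y)\,s^R_Y(X^*)\,s^R_Y(\bar\1)\\
&=S^{R,R}_{Y,X^*}\,\frac{\dim^R(\bar\1)\dim^L(Y)}{\dim^R(Y)}
=\dim^R(\bar\1)\,S^{L,R}_{Y,X^*},
\end{align*}
the last step being the relation $\tfrac{\dim^L(Y)}{\dim^R(Y)}S^{R,R}_{Y,X^*}=S^{L,R}_{Y,X^*}$. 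The analogous computation starting from $S^{R,R}_{X,\bar Y}=S^{R,R}_{\bar Y,X}$ gives $S^{R,R}_{X,\bar Y}=\dim^R(\bar\1)\,S^{L,R}_{X,Y^*}$. Since the matrix $(S^{L,R}_{X,Y^*})_{X,Y}$ is symmetric, $S^{L,R}_{Y,X^*}=S^{L,R}_{X,Y^*}$, and dividing out the nonzero factor $\dim^R(\bar\1)$ yields the claim.

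I do not expect a genuine obstacle here: the only care needed is keeping the three flavours of $S$-matrix and the left/right quantum dimensions straight, and invoking the correct symmetry relation ($(S^{L,R}_{X,Y^*})$ symmetric rather than $S^{R,R}$ symmetric) at the final step. Once the identity $\dim^R(\bar X)=\dim^L(X)\dim^R(\bar\1)$ is available, everything is bookkeeping.
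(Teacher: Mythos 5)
Your proof is correct: every step is covered by facts the paper has already established (the character formula $S^{R,R}_{X,Y}=\dim^R(X)s^R_X(Y)$, the ring-homomorphism property of $s^R_Y$, the symmetry of $S^{R,R}$, $\bar{X}\simeq X^*\otimes\bar{\1}$, the relation $\tfrac{\dim^L}{\dim^R}S^{R,R}=S^{L,R}$, and the symmetry of $(S^{L,R}_{X,Y^*})$), and the quantum dimensions you divide by are nonzero since the objects are simple. The route, however, is not quite the paper's. The paper stays entirely inside the $S^{R,R}$/character formalism: it establishes $\dim^R(\bar{X})=\dim^R(\bar{\1})\dim^R(X^*)=\dim^R(X)\,s^R_X(\bar{\1})$ (the same identity you derive for $s^R_Y(\bar{\1})$, via the symmetry of $S^{R,R}$ at the entry $(\bar{\1},X)$), and then simply writes $S^{R,R}_{\bar{X},Y}=\dim^R(X)\,s^R_X(Y^*)\,s^R_X(\bar{\1})=\dim^R(X)\,s^R_X(Y^*\otimes\bar{\1})$, which \emph{is} $S^{R,R}_{X,\bar{Y}}$ because $\bar{Y}\simeq Y^*\otimes\bar{\1}$ — no mixed matrix, no left dimensions, no further symmetry needed. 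You instead convert both entries into the mixed matrix, $S^{R,R}_{\bar{X},Y}=\dim^R(\bar{\1})S^{L,R}_{Y,X^*}$ and $S^{R,R}_{X,\bar{Y}}=\dim^R(\bar{\1})S^{L,R}_{X,Y^*}$, and close with the symmetry of $(S^{L,R}_{X,Y^*})$ (recorded in the remark that precedes Proposition \ref{prop:smat_sym}, not follows it), which ultimately rests on $\Tr^R_X(f)=\Tr^L_{X^*}(f^*)$. Your detour yields the nice intermediate identity $S^{R,R}_{\bar{X},Y}=\dim^R(\bar{\1})S^{L,R}_{Y,X^*}$ linking the involution to the mixed $S$-matrix; the paper's version is more economical, since after the dimension identity the conclusion is immediate from the definition of the involution. (Also, no division by $\dim^R(\bar{\1})$ is actually needed at the end — both sides carry the same factor — though it is indeed nonzero.)
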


\begin{proof}
  By definition of $s_{\bar{X}}^R$, we have $S^{R,R}_{\bar{X},Y}=\dim^R(\bar{X})s_{\bar{X}}^R(Y)$. But $\dim^R(\bar{X})=\dim^R(\bar{\1})\dim^R(X^*)$ and $\dim^R=s_{\1}^R$ so that
  \[
    \dim^R(\bar{\1})\dim^R(X^*) = \dim^R(\bar{\1})s_{\bar{\1}}^R(X)=S^{R,R}_{\bar{\1},X} = \dim^R(X)s_X^R(\bar{\1}).
  \]
Hence we have $S^{R,R}_{\bar{X},Y}=\dim^R(X)s_{X}^R(Y^*\otimes \bar{\1})$ which leads to the conclusion.
\end{proof}

If the pivotal structure is spherical, the square of the $S$-matrix is well known: up to a scalar multiple, it is the permutation matrix given by the duality on simple objects (see \cite[8.14]{egno} for further details). Let $E$ be the square matrix such that $E_{X,Y}=\delta_{X,\bar{Y}}$.

\begin{proposition}
  \label{prop:square_S}
  Let $\mathcal{C}$ be a nondegenerate braided pivotal fusion category. Then $(S^{R,R})^2 = \dim(\mathcal{C})\dim^R(\bar{\1})E$.
\end{proposition}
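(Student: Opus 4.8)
The plan is to diagonalize $S^{R,R}$ via the ring homomorphisms $s^R_X\colon\Gr(\mathcal{C})\to\Bbbk$, reduce the statement to an orthogonality relation among the $s^R_X$, and then pin down the scalar that appears. First I would rewrite the square in terms of characters: on a simple object $X$ the endomorphism $s^R_{X,Y}$ is a scalar, so $s^R_{X,Y}=s^R_X(Y)\,\id_X$ and $S^{R,R}_{X,Y}=\dim^R(X)\,s^R_X(Y)$; using moreover that $S^{R,R}$ is symmetric, one gets
\[
  (S^{R,R})^2_{X,Z}=\sum_{Y\in\Irr(\mathcal{C})}S^{R,R}_{X,Y}S^{R,R}_{Y,Z}=\dim^R(X)\dim^R(Z)\sum_{Y\in\Irr(\mathcal{C})}s^R_X(Y)\,s^R_Z(Y),
\]
so everything reduces to an orthogonality relation for the $s^R_X$ plus the computation of one normalization constant.

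For the orthogonality I would use the idempotents $e_X$ of $\Bbbk\otimes_{\mathbb Z}\Gr(\mathcal{C})$ recalled just before Proposition~\ref{prop:1barinv}. Writing $d_X:=\sum_Y s^R_X(Y)s^R_X(Y^*)$ (nonzero by \cite[Lemma 8.20.9]{egno}), the defining formula for $e_X$ reads $d_Xe_X=\sum_Y s^R_X(Y^*)[Y]$; since the $e_X$ are orthogonal primitive idempotents and $s^R_X=s^R_Z$ forces $X\simeq Z$, we have $s^R_Z(e_X)=\delta_{X,Z}$, so applying $s^R_Z$ gives $\sum_Y s^R_X(Y^*)s^R_Z(Y)=d_X\delta_{X,Z}$ for all simple $X,Z$. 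Now $Y\mapsto s^R_X(Y^*)$ is by definition the character $s^R_{\bar X}$ and $X\mapsto\bar X$ is an involution, so taking $X=\bar X$ in the identity above and using $s^R_{\bar X}(Y^*)=s^R_X(Y)$ yields $\sum_Y s^R_X(Y)s^R_Z(Y)=d_{\bar X}\delta_{\bar X,Z}$. Plugging this back in, and observing $\delta_{\bar X,Z}=\delta_{X,\bar Z}=E_{X,Z}$, I get $(S^{R,R})^2=\operatorname{diag}(c_X)\,E$ with $c_X:=\dim^R(X)\dim^R(\bar X)\,d_{\bar X}$.

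The main obstacle is to show that $c_X$ does not depend on $X$. Corollary~\ref{cor:symmetry-bar} says $S^{R,R}_{\bar X,Y}=S^{R,R}_{X,\bar Y}$, i.e.\ $E\,S^{R,R}=S^{R,R}\,E$; since $S^{R,R}$ commutes with its square $\operatorname{diag}(c_X)\,E$ and $E^2=\Id$, it follows that $\operatorname{diag}(c_X)$ commutes with $S^{R,R}$, hence $S^{R,R}_{X,Z}\,(c_X-c_Z)=0$ for all simple $X,Z$. Because the double braiding with $\1$ is trivial we have $s^R_\1(Z)=\dim^R(Z)$, so $S^{R,R}_{\1,Z}=\dim^R(Z)\neq0$ for every simple $Z$; therefore $c_Z=c_\1$ for all $Z$. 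I expect this ``all entries of the unit row are nonzero, so $\operatorname{diag}(c)$ is scalar'' step to be the real point, the rest being bookkeeping.

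Finally I would evaluate the common value $c_\1=(S^{R,R})^2_{\1,\bar\1}=\sum_Y S^{R,R}_{\1,Y}S^{R,R}_{Y,\bar\1}$. Here $S^{R,R}_{\1,Y}=\dim^R(Y)$ and, by symmetry of $S^{R,R}$ together with $s^R_{\bar\1}(Y)=s^R_\1(Y^*)=\dim^R(Y^*)=\dim^L(Y)$, one has $S^{R,R}_{Y,\bar\1}=\dim^R(\bar\1)\dim^L(Y)$, whence
\[
  c_\1=\dim^R(\bar\1)\sum_{Y\in\Irr(\mathcal{C})}\dim^R(Y)\dim^L(Y)=\dim^R(\bar\1)\sum_{Y\in\Irr(\mathcal{C})}\lvert Y\rvert^2=\dim(\mathcal{C})\dim^R(\bar\1).
\]
Combined with the previous paragraph this gives $(S^{R,R})^2=\dim(\mathcal{C})\dim^R(\bar\1)\,E$, as claimed.
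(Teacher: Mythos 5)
Your proof is correct, and its first half coincides with the paper's: nondegeneracy gives $s^R_X=s^R_Z$ iff $X\simeq Z$, and orthogonality of these characters shows that $(S^{R,R})^2$ is supported on the graph of the involution $\bar{\phantom{a}}$ (your orthogonality relation $\sum_Y s^R_X(Y^*)s^R_Z(Y)=d_X\delta_{X,Z}$ is exactly \cite[Lemma 8.14.1]{egno}, which the paper cites directly instead of re-deriving it from the idempotents $e_X$; both are fine, since the idempotent decomposition is set up just before Proposition \ref{prop:1barinv}). Where you genuinely diverge is in identifying the scalar on the ``diagonal''. The paper evaluates every entry $(S^{R,R})^2_{Y,\bar Y}$ directly: it expands $s^R_X(Y)s^R_X(\bar Y)=\sum_W N_{Y,\bar Y}^W s^R_X(W)$ using that $s^R_X$ is a ring homomorphism, applies orthogonality a second time to isolate $W=\bar\1$ (producing the factor $\dim(\mathcal{C})$), and invokes $\bar Y\simeq Y^*\otimes\bar\1$ from Proposition \ref{prop:1barinv} to get $N_{Y,\bar Y}^{\bar\1}=1$. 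You instead prove that the diagonal values $c_X$ are all equal by a commutation argument --- Corollary \ref{cor:symmetry-bar} gives $ES^{R,R}=S^{R,R}E$, hence $\operatorname{diag}(c_X)$ commutes with $S^{R,R}$, and the nonvanishing of the unit row $S^{R,R}_{\1,Z}=\dim^R(Z)$ forces $c_Z=c_\1$ --- and then you compute the single entry $(S^{R,R})^2_{\1,\bar\1}=\dim(\mathcal{C})\dim^R(\bar\1)$. The two routes have the same logical depth, since Corollary \ref{cor:symmetry-bar} is itself deduced from Proposition \ref{prop:1barinv}; what your version buys is that it bypasses the fusion-multiplicity computation $N_{Y,\bar Y}^{\bar\1}=1$ (and any explicit use of $\bar Y\simeq Y^*\otimes\bar\1$ beyond the corollary), at the price of an extra matrix manipulation, while the paper's version computes all diagonal entries uniformly and stays closer to the classical modular-category argument.
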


\begin{proof}
  Since $\mathcal{C}$ is nondegenerate, for $X,Y\in\Irr(\mathcal{C})$, the equality $s_X^R=s_Y^R$ as characters of $\Gr(\mathcal{C})$ holds if and only if $X=Y$.

  Suppose $Y\neq \bar{Z}$. We have, thanks to the orthogonality of characters \cite[Lemma 8.14.1]{egno},
\[
\sum_{X\in\Irr(C)}S^{R,R}_{Y,X}S^{R,R}_{X,Z}=\dim^{R}(Y)\dim^{R}(Z)\sum_{X\in\Irr(\mathcal{C})}s_{Y}^R(X)s_{\bar{Z}}^R(X^*)=0.
\]
It remains to compute $(S^{R,R})^2_{Y,\bar{Y}}$:
\begin{align*}
  \sum_{X\in\Irr(C)}S^{R,R}_{Y,X}S^{R,R}_{X,\bar{Y}}&=\sum_{X,W\in\Irr(\mathcal{C})}N_{Y,\bar{Y}}^W\dim^R(X)S^{R,R}_{X,W}\\
                                                  &=\sum_{W\in\Irr(\mathcal{C})}\dim^{R}(W)N_{Y,\bar{Y}}^{W}\sum_{X\in\Irr(\mathcal{C})}\dim^{R}(X)s_{W}^R(X).
\end{align*}
As $\dim^{R}(X) = s_{\1}^R(X)=s_{\bar{\1}}^R(X^*)$, the second sum is zero unless $W=\bar{\1}$ and is equal to
\[
  \sum_{X\in\Irr(\mathcal{C})}\dim^{R}(X)s_{\bar{\1}}^R(X)=\sum_{X\in\Irr(\mathcal{C})}\dim^{R}(X)\dim^{L}(X)=\dim(\mathcal{C}).
\]
Moreover, as $\bar{Y}\simeq Y^*\otimes \bar{\1}$, we have
$N_{Y,\bar{Y}}^{\bar{\1}}=1$ and $(S^{R,R})^2_{Y,\bar{Y}}=\dim^{R}(\bar{\1})\dim(\mathcal{C})$.
\end{proof}

\begin{corollary}[Verlinde formula]
  Let $\mathcal{C}$ be a nondegenerate braided pivotal fusion category and $X,Y,Z \in\Irr(\mathcal{C})$. The structure constants of
  $\Gr(\mathcal{C})$ are given by 
\[
  N_{X,Y}^Z=\frac{1}{\dim(\mathcal{C})\dim^R(\bar{\1})}\sum_{W\in\Irr(\mathcal{C})}\frac{S^{R,R}_{W,X}S^{R,R}_{W,Y}S^{R,R}_{W,\bar{Z}}}{\dim^R(W)}.
\]
\end{corollary}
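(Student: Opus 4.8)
The plan is to expand the right-hand side, convert products of $S$-matrix entries into structure constants via the ring morphisms $s^R_W$, and then collapse the remaining sum over $W$ using Proposition \ref{prop:square_S}.

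First I would record the elementary identity $S^{R,R}_{W,V} = \dim^R(W)\,s^R_W(V)$, valid for all simple $W$ and $V$: indeed $S^{R,R}_{W,V} = \Tr^R_W(s^R_{W,V})$, and since $W$ is simple $s^R_{W,V} = s^R_W(V)\,\id_W$, so the identity follows from $\Tr^R_W(\id_W) = \dim^R(W)$. Applying it to the factors $S^{R,R}_{W,X}$ and $S^{R,R}_{W,Y}$ and using that $s^R_W\colon\Gr(\mathcal{C})\to\Bbbk$ is a ring morphism, one gets
\[
  \frac{S^{R,R}_{W,X}S^{R,R}_{W,Y}}{\dim^R(W)} = \dim^R(W)\,s^R_W(X)\,s^R_W(Y) = \dim^R(W)\,s^R_W\big([X][Y]\big) = \sum_{V\in\Irr(\mathcal{C})}N_{X,Y}^V\,S^{R,R}_{W,V},
\]
the last equality again by $S^{R,R}_{W,V} = \dim^R(W)s^R_W(V)$.

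Next I would substitute this into the sum over $W$, interchange the two summations, and use that $S^{R,R}$ is symmetric to recognize an entry of its square:
\[
  \sum_{W\in\Irr(\mathcal{C})}\frac{S^{R,R}_{W,X}S^{R,R}_{W,Y}S^{R,R}_{W,\bar{Z}}}{\dim^R(W)} = \sum_{V\in\Irr(\mathcal{C})}N_{X,Y}^V\sum_{W\in\Irr(\mathcal{C})}S^{R,R}_{V,W}S^{R,R}_{W,\bar{Z}} = \sum_{V\in\Irr(\mathcal{C})}N_{X,Y}^V\,\big((S^{R,R})^2\big)_{V,\bar{Z}}.
\]
By Proposition \ref{prop:square_S} this equals $\dim(\mathcal{C})\dim^R(\bar{\1})\sum_{V\in\Irr(\mathcal{C})}N_{X,Y}^V E_{V,\bar{Z}}$, and since $\bar{~}$ is an involution on $\Irr(\mathcal{C})$ we have $E_{V,\bar{Z}} = \delta_{V,\overline{\bar{Z}}} = \delta_{V,Z}$; hence the whole sum equals $\dim(\mathcal{C})\dim^R(\bar{\1})\,N_{X,Y}^Z$. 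Dividing by $\dim(\mathcal{C})\dim^R(\bar{\1})$ — a nonzero scalar, as $\dim(\mathcal{C})$ is a sum of totally positive numbers and $\dim^R(\bar{\1})\neq 0$ because $\bar{\1}$ is simple — yields the claimed formula.

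There is no genuine obstacle here: once the two ingredients already available in the excerpt are in hand (each $s^R_W$ is a ring morphism, and $(S^{R,R})^2 = \dim(\mathcal{C})\dim^R(\bar{\1})E$), the argument is pure bookkeeping. The only points requiring a moment's care are the identity $s^R_W(X)s^R_W(Y) = s^R_W([X][Y])$ and the identification $E_{V,\bar{Z}} = \delta_{V,Z}$ through the involutivity of $\bar{~}$.
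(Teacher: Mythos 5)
Your argument is correct and is exactly the intended derivation: the paper leaves the corollary as an immediate consequence of Proposition \ref{prop:square_S}, and your steps (the identity $S^{R,R}_{W,V}=\dim^R(W)s^R_W(V)$, the ring-morphism property of $s^R_W$, the symmetry of $S^{R,R}$, and $E_{V,\bar{Z}}=\delta_{V,Z}$ via involutivity of $\bar{\phantom{a}}$) fill in precisely that omitted bookkeeping, including the nonvanishing of $\dim(\mathcal{C})\dim^R(\bar{\1})$.
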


Recall that giving a pivotal structure on a braided monoidal category is equivalent to endowing the category with a twist using the conventions given before Remark \ref{rk:twist-pivotal}. 

\begin{lemma}
  \label{lem:twist-bar}
  Let $\mathcal{C}$ be a nondegenerate braided pivotal fusion
  category. Then for $X$ simple, $\theta_{\bar{X}}=\theta_{\bar{\mathbf{1}}}\theta_X$.
\end{lemma}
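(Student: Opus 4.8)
The plan is to use the identification $\bar X\simeq X^*\otimes\bar{\1}$ from Proposition~\ref{prop:1barinv} together with the fact that $\bar{\1}$ is invertible, so that tensoring by $\bar{\1}$ alters the twist only by a computable scalar. Since $\bar{\1}$ is invertible, the object $X^*\otimes\bar{\1}$ is simple, so the double braiding $c_{\bar{\1},X^*}\circ c_{X^*,\bar{\1}}$ lies in $\End_{\mathcal{C}}(X^*\otimes\bar{\1})=\Bbbk$ and is multiplication by some $\lambda\in\Bbbk^{\times}$. Applying the twist axiom $\theta_{A\otimes B}=(\theta_A\otimes\theta_B)\circ c_{B,A}\circ c_{A,B}$ with $A=X^*$, $B=\bar{\1}$, and using naturality of $\theta$, one gets $\theta_{\bar X}=\theta_{X^*\otimes\bar{\1}}=\lambda\,\theta_{X^*}\,\theta_{\bar{\1}}$. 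It then remains to compute $\lambda$ and to compare $\theta_{X^*}$ with $\theta_X$.

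First I would identify $\lambda$ by taking a partial trace. Applying $\id_{X^*}\otimes\Tr^R_{\bar{\1}}$ to $c_{\bar{\1},X^*}\circ c_{X^*,\bar{\1}}=\lambda\,\id_{X^*\otimes\bar{\1}}$ gives, directly from the definition of $s^R_{X^*,\bar{\1}}$, the equality $s^R_{X^*,\bar{\1}}=\lambda\dim^R(\bar{\1})\,\id_{X^*}$; applying $\Tr^R_{X^*}$ then yields $S^{R,R}_{X^*,\bar{\1}}=\lambda\dim^R(\bar{\1})\dim^R(X^*)$. On the other hand, using the symmetry of $S^{R,R}$, the normalization $S^{R,R}_{A,B}=\dim^R(A)s^R_A(B)$, the defining property $s^R_{\bar{\1}}(W)=s^R_{\1}(W^*)$ of the bar involution, and $s^R_{\1}=\dim^R$, one computes $S^{R,R}_{X^*,\bar{\1}}=S^{R,R}_{\bar{\1},X^*}=\dim^R(\bar{\1})\,s^R_{\1}(X^{**})=\dim^R(\bar{\1})\dim^R(X)$, the last step since $X^{**}\simeq X$. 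Comparing the two expressions, $\lambda=\dim^R(X)/\dim^R(X^*)=\dim^R(X)/\dim^L(X)$.

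Finally, Proposition~\ref{prop:twist-dim} gives $\theta_{X^*}=\theta_X\,\dim^L(X)/\dim^R(X)$, whence
\[
\theta_{\bar X}=\lambda\,\theta_{X^*}\,\theta_{\bar{\1}}=\frac{\dim^R(X)}{\dim^L(X)}\cdot\theta_X\,\frac{\dim^L(X)}{\dim^R(X)}\cdot\theta_{\bar{\1}}=\theta_{\bar{\1}}\,\theta_X,
\]
which is the claim.

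The step requiring the most care is the middle one: one must genuinely exploit that $\bar{\1}$ is invertible — which is exactly what makes the double braiding scalar and is supplied by Proposition~\ref{prop:1barinv} — and one must keep track of the various normalizations ($S^{R,R}_{A,B}=\dim^R(A)s^R_A(B)$, the relation $s^R_{\bar Z}(W)=s^R_Z(W^*)$, and the fact that it is $S^{R,R}$, not $S^{L,R}$, that is symmetric). Once $\lambda$ is pinned down, the conclusion is a one-line substitution of Propositions~\ref{prop:1barinv} and~\ref{prop:twist-dim}.
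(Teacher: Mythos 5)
Your proof is correct and follows essentially the same route as the paper: both arguments exploit $\bar X\simeq X^*\otimes\bar{\1}$, evaluate the twist relation on $X^*\otimes\bar{\1}$ by taking right quantum traces so that $S^{R,R}_{X^*,\bar{\1}}=\dim^R(\bar{\1})\dim^R(X)$ appears via $s^R_{\bar{\1}}(X^*)=s^R_{\1}(X^{**})$, and then conclude with Proposition~\ref{prop:twist-dim}. Your isolation of the double-braiding scalar $\lambda$ before tracing is only a cosmetic reorganization of the paper's computation.
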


\begin{proof}
  Taking the right quantum trace of the morphism $\theta_{X^*\otimes \bar{\mathbf{1}}}=\theta_{X^*}\otimes\theta_{\bar{\mathbf{1}}}\circ c_{\bar{\mathbf{1}},X^*}\circ c_{X^*,\bar{\mathbf{1}}}$ we obtain
 \begin{align*}
  \theta_{\bar{X}}\dim^R(\bar{X}) = \theta_{X^*}\theta_{\bar{\mathbf{1}}}S^{R,R}_{X^*,\bar{\mathbf{1}}} 
   &= \theta_{X^*}\theta_{\bar{\mathbf{1}}}\dim^R(\bar{\mathbf{1}})s^R_{\bar{\mathbf{1}}}(X^*)\\
   &= \theta_{X^*}\theta_{\bar{\mathbf{1}}} \dim^R(\bar{\mathbf{1}}) \dim^R(X).
  \end{align*}
  The equality $\theta_{\bar{X}}=\theta_{\bar{\mathbf{1}}}\theta_X$ follows then immediately from the fact that $\bar{X}\simeq X^*\otimes \bar{\mathbf{1}}$ and from the Proposition \ref{prop:twist-dim}.
\end{proof}

\begin{remark}
  Taking for $X$ the simple object $\bar{\1}$, we find that $\theta_{\bar{\1}}^2=1$.
\end{remark}

As in the case of a spherical category \cite[Proposition 8.15.4]{egno}, the Gauss sums satisfy $\tau^+(\mathcal{C})\tau^-(\mathcal{C})=\dim(\mathcal{C})$ and hence are non-zero.

\begin{proposition}
  \label{prop:twist1bar}
  Let $\mathcal{C}$ be a nondegenerate braided pivotal fusion category. Then $\theta_{\bar{\1}}=1$, where $\theta$ is the twist associated to the pivotal structure. Therefore for any simple object $X$, one has $\theta_{\bar{X}}=\theta_X$.
\end{proposition}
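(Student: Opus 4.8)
The plan is to pin down the sign $\theta_{\bar\1}=\pm1$ by a cancellation in the positive Gauss sum. Recall that the discussion preceding the statement gives $\tau^+(\mathcal{C})\tau^-(\mathcal{C})=\dim(\mathcal{C})$, hence $\tau^+(\mathcal{C})\neq0$; and that $X\mapsto\bar X$ is a bijection of $\Irr(\mathcal{C})$ with $\bar X\simeq X^*\otimes\bar\1$ and $\theta_{\bar X}=\theta_{\bar\1}\theta_X$ (Proposition \ref{prop:1barinv} and Lemma \ref{lem:twist-bar}).

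First I would observe that the squared norm is invariant under $X\mapsto\bar X$. Since $\bar\1$ is invertible, $\bar\1\otimes\bar\1^*\simeq\1$ forces $\lvert\bar\1\rvert^2=\dim^R(\bar\1)\dim^R(\bar\1^*)=1$. Using that $\dim^R$ is a ring homomorphism on $\Gr(\mathcal{C})$ and that $X^{**}\simeq X$, the isomorphism $\bar X\simeq X^*\otimes\bar\1$ yields
\[
  \lvert\bar X\rvert^2=\dim^R(\bar X)\dim^R(\bar X^{\,*})=\dim^R(X^*)\dim^R(\bar\1)\cdot\dim^R(X)\dim^R(\bar\1^*)=\lvert X\rvert^2\lvert\bar\1\rvert^2=\lvert X\rvert^2.
\]

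Then I reindex the Gauss sum along $X\mapsto\bar X$ and use Lemma \ref{lem:twist-bar}:
\[
  \tau^+(\mathcal{C})=\sum_{X\in\Irr(\mathcal{C})}\theta_X\lvert X\rvert^2=\sum_{X\in\Irr(\mathcal{C})}\theta_{\bar X}\lvert\bar X\rvert^2=\theta_{\bar\1}\sum_{X\in\Irr(\mathcal{C})}\theta_X\lvert X\rvert^2=\theta_{\bar\1}\,\tau^+(\mathcal{C}),
\]
so $\theta_{\bar\1}=1$ because $\tau^+(\mathcal{C})\neq0$. The final assertion $\theta_{\bar X}=\theta_X$ for all simple $X$ is then immediate from Lemma \ref{lem:twist-bar}.

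There is no genuine difficulty here beyond having the earlier results in hand — the proof is essentially a one-line cancellation. The only point that needs a moment's care is the norm invariance $\lvert\bar X\rvert^2=\lvert X\rvert^2$, which is exactly where invertibility of $\bar\1$ (and hence $\lvert\bar\1\rvert^2=1$) is used; note also that the Remark before the statement already reduces the claim to excluding the value $-1$, which the Gauss-sum identity does at once.
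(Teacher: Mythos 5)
Your argument is correct, and it takes a genuinely different route from the paper. The paper's proof stays inside the $S$-matrix machinery: using nondegeneracy it derives (as in the analogue of \cite[Corollary 8.15.5]{egno}) the identity $\sum_{X}\theta_X^{-1}\dim^R(X)S^{R,R}_{X,Y}=\theta_{\bar\1}\theta_Y\dim^R(Y)\tau^-(\mathcal{C})$, compares it at $Y=\1$ with Lemma \ref{lem:twist+tau-}, and cancels $\tau^-(\mathcal{C})\neq 0$. You instead bypass the $S$-matrix and Lemmas \ref{lem:twist-tau+}--\ref{lem:twist+tau-} altogether: you reindex $\tau^+(\mathcal{C})$ along the bijection $X\mapsto\bar X$ and cancel $\tau^+(\mathcal{C})\neq0$, with nondegeneracy entering only through the facts already established in Proposition \ref{prop:1barinv} and Lemma \ref{lem:twist-bar}, plus the norm invariance $\lvert\bar X\rvert^2=\lvert X\rvert^2$, which you correctly reduce to $\lvert\bar\1\rvert^2=1$ via multiplicativity of $\dim^R$ and $X^{**}\simeq X$. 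Both proofs ultimately rest on the same non-proved-in-detail fact stated just before the proposition, namely $\tau^+(\mathcal{C})\tau^-(\mathcal{C})=\dim(\mathcal{C})$ (the paper's cancellation of $\tau^-$ needs it just as much as your cancellation of $\tau^+$), so you are not assuming more than the paper does. What your route buys is brevity and independence from the orthogonality computation; what the paper's route buys is that the $S$-matrix identities it invokes are needed anyway for Theorem \ref{thm:sl2rel_nondeg}, so no extra input is introduced there. Your closing remark that the preceding Remark ($\theta_{\bar\1}^2=1$) reduces the claim to excluding $-1$ is accurate but not needed for your argument.
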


\begin{proof}
  Using the fact that $\mathcal{C}$ is nondegenerate, we have, as in \cite[Corollary 8.15.5]{egno}
\[
  \sum_{X\in \Irr(\mathcal{C})}\theta_X^{-1}\dim^{R}(X)S^{R,R}_{X,Y} = \theta_{\bar{Y}}\dim^{L}(\bar{Y})\tau^-(\mathcal{C})\dim^{R}(\bar{\1}).
\]
As $\bar{Y} \simeq Y^*\otimes \bar{\1}$, $\theta_{\bar{Y}}=\theta_{\bar{\1}}\theta_{Y}$ and $\lvert \1 \rvert^2 = 1$ we have
\[
\sum_{X\in \Irr(\mathcal{C})}\theta_X^{-1}\dim^{R}(X)S^{R,R}_{X,Y} = \theta_{\bar{\1}}\theta_{Y}\dim^{R}(Y)\tau^-(\mathcal{C}).
\]
This equality for $Y=\1$, together with Lemma \ref{lem:twist+tau-}, show that $\theta_{\bar{\1}}=1$.
\end{proof}

The group $SL_2(\mathbb{Z})$ has a presentation given by:
\[
  \langle \mathfrak{s},\mathfrak{t}\mid \mathfrak{s}^4=1,(\mathfrak{s}\mathfrak{t})^3=\mathfrak{s}^2\rangle
\]
by choosing 
\[
  \mathfrak{s} =
  \begin{pmatrix}
    0&-1\\
    1&0
  \end{pmatrix}
  \qquad \text{and} \qquad
  \mathfrak{t} =
  \begin{pmatrix}
    1&1\\
    0&1
  \end{pmatrix}.
\]

We now choose an embedding $\Bbbk_{\mathrm{alg}}\rightarrow\mathbb{C}$. The categorical dimension of $\mathcal{C}$ being a totally positive number \cite[Theorem 7.21.12]{egno}, we denote its positive square root for the chosen embedding by $\sqrt{\dim(\mathcal{C})}$. We moreover choose a square root $\sqrt{\dim^R(\bar{\1})}$ of $\dim^R(\bar{\1})$. The $T$-matrix of $\mathcal{C}$ is the diagonal matrix given by the action of the inverse of the twist $\theta$ on simple objects. We have the non-spherical analogue of \cite[Theorem 8.16.1]{egno}:

\begin{theoreme}\label{thm:sl2rel_nondeg}
  Let $\mathcal{C}$ be a nondegenerate braided pivotal fusion category. We have $(S^{R,R}T)^3=\tau^{-}(\mathcal{C})(S^{R,R})^2$ and $(S^{R,R})^4 = (\dim(\mathcal{C})\dim^R(\bar{\1}))^2\id$. Therefore
\[
  \mathfrak{s}\mapsto \frac{1}{\sqrt{\dim^R(\bar{\1})}\sqrt{\dim(\mathcal{C})}}S^{R,R} \qquad \text{and}\qquad \mathfrak{t}\mapsto T
\]
define a projective representation of $SL_2(\mathbb{Z})$.
\end{theoreme}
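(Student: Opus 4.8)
The plan is to derive both matrix relations from Proposition~\ref{prop:square_S}, which already gives $(S^{R,R})^2=\dim(\mathcal{C})\dim^R(\bar{\1})E$, plus one new identity between $S^{R,R}$ and $T$. For the fourth power, first note that $X\mapsto\bar{X}$ is an involution of $\Irr(\mathcal{C})$: indeed $s_{\bar{\bar{X}}}^R(Y)=s_{\bar{X}}^R(Y^*)=s_X^R(Y^{**})=s_X^R(Y)$ since $Y\simeq Y^{**}$, so $\bar{\bar{X}}=X$ by nondegeneracy, whence $E^2=\id$. Squaring Proposition~\ref{prop:square_S} then gives $(S^{R,R})^4=(\dim(\mathcal{C})\dim^R(\bar{\1}))^2\id$.

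For the cubic relation, the crux is to prove
\[
  S^{R,R}\,T\,S^{R,R}=\tau^-(\mathcal{C})\,T^{-1}S^{R,R}T^{-1},
\]
that is, $\sum_{W\in\Irr(\mathcal{C})}S^{R,R}_{X,W}\theta_W^{-1}S^{R,R}_{W,Y}=\tau^-(\mathcal{C})\,\theta_X\theta_Y\,S^{R,R}_{X,Y}$ for all simple $X,Y$. I would argue as follows. Using the symmetry of $S^{R,R}$ and the fact that for $W$ simple one has $S^{R,R}_{W,Z}=\dim^R(W)\,s_W^R(Z)$ with $s_W^R\colon\Gr(\mathcal{C})\to\Bbbk$ a ring homomorphism, the product collapses: $S^{R,R}_{X,W}S^{R,R}_{W,Y}=\dim^R(W)\sum_Z N_{X,Y}^Z S^{R,R}_{W,Z}$. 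Multiplying by $\theta_W^{-1}$, summing over $W$ and exchanging the order of summation turns the left-hand side into $\sum_Z N_{X,Y}^Z\bigl(\sum_W\theta_W^{-1}\dim^R(W)S^{R,R}_{W,Z}\bigr)$; now \eqref{eq:twist+tau-} evaluates the inner sum as $\theta_Z\dim^R(Z)\tau^-(\mathcal{C})$, and finally \eqref{eq:twist-S-matrice} applied to the twist $\theta$ rewrites $\sum_Z N_{X,Y}^Z\dim^R(Z)\theta_Z$ as $\theta_X\theta_Y S^{R,R}_{X,Y}$, which is the claim.

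Granting this, I would finish by writing $(S^{R,R}T)^3=(S^{R,R}TS^{R,R})(TS^{R,R}T)=\tau^-(\mathcal{C})\,T^{-1}(S^{R,R})^2T$, and noting that $(S^{R,R})^2=\dim(\mathcal{C})\dim^R(\bar{\1})E$ commutes with the diagonal matrix $T$ because $(T^{-1}ET)_{X,Y}=\theta_X\delta_{X,\bar{Y}}\theta_Y^{-1}=\delta_{X,\bar{Y}}=E_{X,Y}$ by $\theta_{\bar{Y}}=\theta_Y$ (Proposition~\ref{prop:twist1bar}); hence $(S^{R,R}T)^3=\tau^-(\mathcal{C})(S^{R,R})^2$. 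For the representation, set $\mathfrak{s}\mapsto(\dim^R(\bar{\1})\dim(\mathcal{C}))^{-1/2}S^{R,R}$ and $\mathfrak{t}\mapsto T$ for the chosen square roots; then $\mathfrak{s}^4=\id$, $\mathfrak{s}^2=E$ is an involution, and $(\mathfrak{s}\mathfrak{t})^3=\bigl(\tau^-(\mathcal{C})/\sqrt{\dim^R(\bar{\1})\dim(\mathcal{C})}\bigr)\mathfrak{s}^2$, so the defining relator $(\mathfrak{s}\mathfrak{t})^3\mathfrak{s}^{-2}$ is sent to a nonzero scalar ($\tau^-(\mathcal{C})\neq0$ as $\tau^+(\mathcal{C})\tau^-(\mathcal{C})=\dim(\mathcal{C})$, and both $S^{R,R}$ and $T$ are invertible); thus we obtain a projective representation of $SL_2(\mathbb{Z})=\langle\mathfrak{s},\mathfrak{t}\mid\mathfrak{s}^4=1,(\mathfrak{s}\mathfrak{t})^3=\mathfrak{s}^2\rangle$.

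The step I expect to be the main obstacle is the identity $S^{R,R}TS^{R,R}=\tau^-(\mathcal{C})T^{-1}S^{R,R}T^{-1}$, and specifically the manipulation making \eqref{eq:twist+tau-} applicable: the right move is to collapse $S^{R,R}_{X,W}S^{R,R}_{W,Y}$ using multiplicativity of $s_W^R$ \emph{before} summing over $W$; substituting \eqref{eq:twist-S-matrice} directly into either $S^{R,R}$ factor produces unwanted $\theta_W^{-2}$ terms and leads nowhere. The remaining steps are formal and use only results established earlier in this section.
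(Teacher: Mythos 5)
Your proof is correct and follows essentially the route the paper intends: the paper leaves the theorem as the non-spherical analogue of the modular case, with Lemmas \ref{lem:twist-tau+} and \ref{lem:twist+tau-}, equation \eqref{eq:twist-S-matrice}, Proposition \ref{prop:square_S} and Proposition \ref{prop:twist1bar} as the ingredients, and it even records the key identity $S^{R,R}TS^{R,R}=\tau^-(\mathcal{C})T^{-1}S^{R,R}T^{-1}$ immediately after the statement — exactly the identity you establish entrywise. Your derivation of that identity (collapsing $S^{R,R}_{X,W}S^{R,R}_{W,Y}$ via multiplicativity of $s_W^R$ before summing, then applying \eqref{eq:twist+tau-} and \eqref{eq:twist-S-matrice}) and the use of $E^2=\id$ and $\theta_{\bar X}=\theta_X$ to conclude are all sound.
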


We can show that that $S^{R,R}$ and $T$ satisfy $(S^{R,R}T^{-1})^3 =
\tau^+(\mathcal{C})\dim(\mathcal{C})\dim^R(\bar{\1})^2\id$. Indeed, we have
\[
  S^{R,R}TS^{R,R}=\tau^{-}(\mathcal{C})T^{-1}S^{R,R}T^{-1}.
\] 
Multiplying by $S^{R,R}$ on both sides, using Proposition \ref{prop:square_S} and the fact that $\theta_{\bar{X}}=\theta_X$, we have
\[
  \dim(\mathcal{C})^2\dim^R(\bar{\1})^2T = \tau^{-}(\mathcal{C})S^{R,R}T^{-1}S^{R,R}T^{-1}S^{R,R},
\]
which gives $(S^{R,R}T^{-1})^3 =
\tau^+(\mathcal{C})\dim(\mathcal{C})\dim^R(\bar{\1})^2\id$ since $\tau^+(\mathcal{C})\tau^-(\mathcal{C})=\dim(\mathcal{C})$.

Define
$\xi(\mathcal{C}) =
\frac{\tau^+(\mathcal{C})}{\sqrt{\dim(\mathcal{C})}}\sqrt{\dim^R(\bar{\1})}$
so that the images of $\mathfrak{s}$ and $\mathfrak{t}$ satisfy
\[
  \mathfrak{s}^4=\id,\quad
  (\mathfrak{st})^3=\xi(\mathcal{C})^{-1}\mathfrak{s}^2
  \quad \text{and} \quad (\mathfrak{st^{-1}})^3 = \xi(\mathcal{C})\id.
\]

\begin{corollary}
  \label{cor:N-modular}
  Let $\mathcal{C}$ be a nondegenerate braided pivotal fusion category over $\mathbb{C}$. Let 
\[
  \widetilde{S}^{R,R}:=\frac{1}{\sqrt{\dim^R(\bar{\1})}\sqrt{\dim(\mathcal{C})}}S^{R,R}.
\]
Then the finite set $\Irr(\mathcal{C})$, the unit object $\1$ and the matrices $\widetilde{S}^{R,R}$ and $T^{-1}$ define a $\mathbb{N}$-modular datum.
\end{corollary}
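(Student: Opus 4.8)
The plan is to verify, one by one, the three defining properties of an $\mathbb{N}$-modular datum for the tuple $(\Irr(\mathcal{C}),\1,\widetilde{S}^{R,R},T^{-1})$. Almost all of the work is already packaged in Theorem~\ref{thm:sl2rel_nondeg}, Propositions~\ref{prop:square_S}, \ref{prop:twist1bar} and the Verlinde formula, and the only point requiring a genuinely new input is the \emph{unitarity} of $\widetilde{S}^{R,R}$.

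\medskip

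First the immediate parts. For property (1), $\widetilde{S}^{R,R}_{\1,X}=\dim^R(X)/\sqrt{\dim^R(\bar\1)\dim(\mathcal{C})}\neq 0$ because the right quantum dimension of a simple object is non-zero. Symmetry of $\widetilde{S}^{R,R}$ is inherited from that of $S^{R,R}$. Dividing the identity of Proposition~\ref{prop:square_S} by the scalar $\dim(\mathcal{C})\dim^R(\bar\1)$ gives $(\widetilde{S}^{R,R})^2=E$, hence $(\widetilde{S}^{R,R})^4=E^2=\Id$ (since $\bar{\phantom{x}}$ is an involution, $E^2=\Id$). The relation $(\widetilde{S}^{R,R}T^{-1})^3=\xi(\mathcal{C})\Id$ is exactly the identity $(\mathfrak{s}\mathfrak{t}^{-1})^3=\xi(\mathcal{C})\id$ recorded after Theorem~\ref{thm:sl2rel_nondeg}, read through $\mathfrak{s}\mapsto\widetilde{S}^{R,R}$, $\mathfrak{t}\mapsto T$; so property (2) holds with $\lambda=\xi(\mathcal{C})$. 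Finally $(\widetilde{S}^{R,R})^2=E$ commutes with the diagonal matrix $T^{-1}$ precisely when $\theta_{\bar X}=\theta_X$ for every simple $X$, which is Proposition~\ref{prop:twist1bar}; this is the relation $[S^2,T]=\Id$.

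\medskip

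It remains to establish unitarity and property (3), for which the key point is
\[
\overline{S^{R,R}_{X,Y}}=\frac{1}{\dim^R(\bar\1)}\,S^{R,R}_{\bar X,Y},\qquad\text{equivalently}\qquad \overline{\widetilde{S}^{R,R}_{X,Y}}=\widetilde{S}^{R,R}_{\bar X,Y}.
\]
I would derive this from the standard unitarity of the character table of a fusion ring: every character $\chi$ of $\Gr(\mathcal{C})$ satisfies $\chi([Y^*])=\overline{\chi([Y])}$ --- indeed, $\mathbb{C}\otimes_{\mathbb{Z}}\Gr(\mathcal{C})$, equipped with the $*$-structure coming from duality and with the positive definite trace form $\langle[Y],[Z]\rangle=\delta_{Y,Z}$, is a finite-dimensional commutative $C^*$-algebra, hence $*$-isomorphic to $\mathbb{C}^{\Irr(\mathcal{C})}$ with coordinatewise conjugation. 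Applied to $\chi=s^R_X$ this gives $\overline{s^R_X([Y])}=s^R_X([Y^*])=s^R_{\bar X}([Y])$, and applied to $\chi=s^R_{\1}=\dim^R$ it gives $\overline{\dim^R(X)}=\dim^R(X^*)$; combining these with $S^{R,R}_{X,Y}=\dim^R(X)s^R_X([Y])$ and $\dim^R(\bar X)=\dim^R(X^*)\dim^R(\bar\1)$ (Proposition~\ref{prop:1barinv} and multiplicativity of $\dim^R$) yields the displayed formula. The case $X=\1$ also gives $\dim^R(\bar\1)\,\overline{\dim^R(\bar\1)}=\lvert\bar\1\rvert^2=1$, so the chosen square root has $\lvert\sqrt{\dim^R(\bar\1)}\rvert^2=1$ and, using that $\dim(\mathcal{C})$ is totally positive, $\lvert 1/\sqrt{\dim^R(\bar\1)\dim(\mathcal{C})}\rvert^2=1/\dim(\mathcal{C})$; this is exactly why the normalization absorbs the factor $1/\dim^R(\bar\1)$, producing the second form of the identity. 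Granting this, unitarity is immediate: using the identity and symmetry,
\[
\sum_{Z\in\Irr(\mathcal{C})}\widetilde{S}^{R,R}_{X,Z}\,\overline{\widetilde{S}^{R,R}_{Y,Z}}=\sum_{Z}\widetilde{S}^{R,R}_{X,Z}\,\widetilde{S}^{R,R}_{Z,\bar Y}=\bigl((\widetilde{S}^{R,R})^2\bigr)_{X,\bar Y}=E_{X,\bar Y}=\delta_{X,Y}.
\]
For property (3) one substitutes $\widetilde{S}^{R,R}_{X,W}=S^{R,R}_{X,W}/\sqrt{\dim^R(\bar\1)\dim(\mathcal{C})}$, $\overline{\widetilde{S}^{R,R}_{Z,W}}=\widetilde{S}^{R,R}_{\bar Z,W}$ and $\widetilde{S}^{R,R}_{\1,W}=\dim^R(W)/\sqrt{\dim^R(\bar\1)\dim(\mathcal{C})}$ into $\sum_W\widetilde{S}^{R,R}_{X,W}\widetilde{S}^{R,R}_{Y,W}\overline{\widetilde{S}^{R,R}_{Z,W}}/\widetilde{S}^{R,R}_{\1,W}$; the square roots cancel and one is left with $\tfrac{1}{\dim(\mathcal{C})\dim^R(\bar\1)}\sum_W S^{R,R}_{W,X}S^{R,R}_{W,Y}S^{R,R}_{W,\bar Z}/\dim^R(W)$, which is $N_{X,Y}^Z$ by the Verlinde formula; this lies in $\mathbb{N}$, being the multiplicity of $Z$ in $X\otimes Y$.

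\medskip

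The main obstacle is precisely this key identity, i.e.\ the unitarity of $\widetilde{S}^{R,R}$: since the right quantum dimensions $\dim^R(X)$ need not be real for a non-spherical pivotal structure, one genuinely needs the input that duality realizes complex conjugation on the character table of $\Gr(\mathcal{C})$ (the $C^*$-algebra fact above). Everything else is bookkeeping with the scalars $\dim(\mathcal{C})$, $\dim^R(\bar\1)$, $\tau^\pm(\mathcal{C})$ and $\xi(\mathcal{C})$ already at hand, together with the single observation $\lvert\sqrt{\dim^R(\bar\1)}\rvert^2=1$ that controls the chosen square root.
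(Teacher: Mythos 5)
Your proposal is correct and follows essentially the same route as the paper: reduce everything to the unitarity of $\widetilde{S}^{R,R}$, and obtain it from the conjugation relation $\overline{\widetilde{S}^{R,R}_{X,Y}}=\widetilde{S}^{R,R}_{\bar X,Y}$ (the paper writes the equivalent $\widetilde{S}^{R,R}_{X,\bar Y}$, using symmetry), which in both cases rests on the fact that characters of $\Gr(\mathcal{C})$ send duals to complex conjugates, combined with $\bar Y\simeq Y^*\otimes\bar\1$ and the modulus-one property of $\dim^R(\bar\1)$. The only difference is cosmetic: where the paper cites \cite[Propositions 2.9 and 2.12]{eno} for the conjugation property of characters, you re-derive it via the $C^*$-structure on $\mathbb{C}\otimes_{\mathbb{Z}}\Gr(\mathcal{C})$, and you spell out properties (1)--(3) (via the Verlinde formula) that the paper leaves implicit.
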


\begin{proof}
  It remains to show that the matrix $\widetilde{S}^{R,R}$ is unitary. Since $(\tilde{S}^{R,R})^2$ is the permutation matrix given by the involution $\bar{\phantom{a}}$, we show that
\[
  \overline{\tilde{S}^{R,R}_{X,Y}}=\tilde{S}^{R,R}_{X,\bar{Y}}
\]
for all $X,Y\in\Irr(\mathcal{C})$. Following \cite[Proposition 2.12]{eno}, one shows that for any $X,Y\in \Irr(\mathcal{C})$
\[
   \overline{\left(\frac{\widetilde{S}^{R,R}_{X,Y}}{\widetilde{S}^{R,R}_{1,Y}}\right)} = \frac{\widetilde{S}^{R,R}_{X^*,Y}}{\widetilde{S}^{R,R}_{1,Y}}
\]
and therefore
\[
  \overline{\widetilde{S}^{R,R}_{X,Y}}= \frac{\overline{\widetilde{S}^{R,R}_{1,Y}}}{\widetilde{S}^{R,R}_{1,Y}}\widetilde{S}^{R,R}_{X^*,Y}= \frac{\overline{\widetilde{S}^{R,R}_{1,Y}}}{\widetilde{S}^{R,R}_{1,\bar{Y}}}\widetilde{S}^{R,R}_{X,\bar{Y}},
\]
the last equality following by definition of the involution $\bar{\phantom{a}}$. But 
\[
\overline{\widetilde{S}^{R,R}_{1,Y}} = \frac{\sqrt{\dim^R(\bar{\1})}\dim^R(Y^*)}{\sqrt{\dim(\mathcal{C})}},
\]
since $\sqrt{\dim^R(\bar{\1})}$ is a root of unity and $\overline{\dim^R(Y)}=\dim^R(Y^*)$ (see \cite[Proposition 2.9]{eno}). We conclude using the fact that $\bar{Y}\simeq Y^*\otimes\bar{\1}$.
\end{proof}

We now generalize Vafa's theorem to the setting where the pivotal structure is not necessarily spherical.

\begin{theoreme}
  \label{thm:vafa-non-dg}
  Let $\mathcal{C}$ be a nondegenerate braided pivotal fusion category. Then for any simple object $X$, the twist $\theta_X$ is a root of unity and so is $\xi(\mathcal{C})$.
\end{theoreme}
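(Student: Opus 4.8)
The plan is to mimic the classical Vafa/Anderson--Moore argument (as presented in \cite[Section 8.16]{egno}) but to keep careful track of the asymmetry between $\dim^L$ and $\dim^R$. First I would extract from the twist relation a ``quadratic'' identity: applying \eqref{eq:twist-S-matrice} and summing, or more directly combining Lemma \ref{lem:twist-tau+} and Lemma \ref{lem:twist+tau-}, one gets relations of the form $\theta_X\theta_Y S^{R,R}_{X,Y}=\sum_Z N_{X,Y}^Z\dim^R(Z)\theta_Z$ together with their ``inverse-twist'' analogues coming from $\mathcal{C}^{\mathrm{rev}}$. From Theorem \ref{thm:sl2rel_nondeg} we already know that $\widetilde S^{R,R}$ and $T^{-1}$ generate a \emph{projective} representation of $SL_2(\mathbb{Z})$; the key extra input is that all the structure constants $N_{X,Y}^Z$ are \emph{integers} and the entries of $S^{R,R}$, $\dim^R$, $\theta$ are \emph{algebraic}. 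So the strategy is: show the $\theta_X$ all lie in a fixed cyclotomic-type field and have absolute value $1$ under \emph{every} embedding into $\mathbb{C}$; a nonzero algebraic number all of whose Galois conjugates have absolute value $1$ and which is an algebraic integer is a root of unity (Kronecker), so it remains to bound denominators.

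Concretely, the main steps would be: (1) Let $\zeta$ be a primitive $N$-th root of unity where $N$ is chosen so that all twists $\theta_X$ lie in $\mathbb{Q}(\zeta)$ — this is legitimate because, by Corollary \ref{cor:N-modular}, $\widetilde S^{R,R}$ and $T^{-1}$ give an $\mathbb{N}$-modular datum, and by \cite[Theorem 8.14.7 / Ng--Schauenburg congruence property]{egno} (the Galois symmetry of modular data) the twists are roots of unity lying in a cyclotomic field; but since we are trying to \emph{prove} the twists are roots of unity, I would instead argue directly. (1') Note first that $(ST)^3 = \tau^-(\mathcal{C})S^2$ with $S^2$ a nonzero scalar times a permutation matrix, so $\tau^-(\mathcal{C})$ and $\tau^+(\mathcal{C})$ — hence $\xi(\mathcal{C})$ — are algebraic numbers; moreover $|\xi(\mathcal{C})|=1$ under every embedding because $\tau^+\tau^-=\dim(\mathcal{C})$ is totally positive and $|\tau^\pm|^2$ can be shown equal to $\dim(\mathcal{C})$ by the standard Gauss-sum computation $\overline{\tau^+(\mathcal{C})}=\tau^-(\mathcal{C})$ applied with $\overline{\theta_X}=\theta_{X}^{-1}$ (the twists have absolute value $1$ since they act by roots of unity on a unitarizable-up-to-embedding module — equivalently $\overline{\dim^R(X)}=\dim^R(X^*)$ and the eigenvalue structure of $c_{X,X}$). (2) Diagonalize: the matrices $S^{R,R}$ and $T$ act on the finitely generated $\mathbb{Z}$-module $\Gr(\mathcal{C})$ with algebraic-integer entries (the $N_{X,Y}^Z$ are integers, and $\dim^R$, $\theta$ are algebraic integers being eigenvalues of integer matrices — the fusion matrices $N_X$ and the ``twist matrix'' obtained from \eqref{eq:twist-S-matrice}). (3) Apply a Galois automorphism $\sigma$ of $\overline{\mathbb{Q}}/\mathbb{Q}$: $\sigma$ permutes the ring homomorphisms $s_X^R$, hence induces a permutation $X\mapsto \sigma(X)$ of $\Irr(\mathcal{C})$ with $\sigma(S^{R,R}_{X,Y}) = \pm S^{R,R}_{\sigma X,\sigma Y}$-type symmetry, and from the relation $\theta_X^2 = $ (ratio of $S$-matrix entries and $\dim^R$'s expressible via \eqref{eq:twist-S-matrice} and Lemma \ref{lem:twist-bar}) one deduces $\sigma(\theta_X)/\theta_{\sigma X}$ is a root of unity of bounded order, whence $\theta_X$ has only finitely many conjugates and each has absolute value $1$; being an algebraic integer, Kronecker's theorem finishes it. Then $\xi(\mathcal{C})$ is a root of unity because $(\mathfrak{s}\mathfrak{t}^{-1})^3=\xi(\mathcal{C})\id$ forces $\xi(\mathcal{C})$ to be a ratio of products of $T$-eigenvalues (up to the scalar $\sqrt{\dim(\mathcal{C})\dim^R(\bar\1)}$ which cancels), all of which are now roots of unity.

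The main obstacle, and the place I would spend the most care, is \textbf{step (3)}: establishing that the $\theta_X$ are algebraic integers (or at least have bounded denominator) and controlling the Galois action on the asymmetric $S$-matrix $S^{R,R}$, which is not symmetric-looking in the way the spherical one is — one must use Corollary \ref{cor:symmetry-bar}, Proposition \ref{prop:square_S}, and Lemma \ref{lem:twist-bar} to get the analogue of the clean spherical identities. The cleanest route is probably to reduce to the $\mathbb{N}$-modular datum produced in Corollary \ref{cor:N-modular} and invoke the already-known Vafa theorem for $\mathbb{N}$-modular data (the congruence subgroup / Ng--Schauenburg result), observing that $\theta_X$ differs from the $T$-eigenvalue of the modular datum only by a fixed root of unity; then $\xi(\mathcal{C})$ follows from the displayed relation $(\mathfrak{st^{-1}})^3=\xi(\mathcal{C})\id$ together with the fact that a projective $SL_2(\mathbb{Z})$-representation in which $\mathfrak t$ has finite order and the anomaly satisfies $\xi^3 = (\text{root of unity})$ has $\xi$ itself a root of unity.
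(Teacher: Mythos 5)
Your plan announces Vafa's argument but then actually develops a Galois/Kronecker strategy, and that strategy has a genuine gap at its core. Kronecker's theorem requires two inputs you never establish: that each $\theta_X$ is an algebraic integer, and that \emph{every} Galois conjugate of $\theta_X$ has absolute value $1$. Step (3) simply asserts that $\sigma(\theta_X)/\theta_{\sigma X}$ is a root of unity of bounded order ``from a relation expressible via the $S$-matrix''; but controlling the Galois action on the non-symmetric matrix $S^{R,R}$ and on the twists is precisely the hard content of the theorem (it is the Coste--Gannon/de Boer--Goeree machinery), not a consequence of the identities quoted so far. The fallback you call the ``cleanest route'' does not close the gap either: the Ng--Schauenburg congruence property is a theorem about spherical (pre)modular categories, not about an abstract $\mathbb{N}$-modular datum attached to a non-spherical pivotal category, and the abstract-data results on finite order of $T$ are stated for data with $(ST)^3=S^2$ on the nose. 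Here there is an anomaly: $(\widetilde{S}^{R,R}T)^3=\xi(\mathcal{C})^{-1}(\widetilde{S}^{R,R})^2$, and rescaling $T$ to remove it multiplies the diagonal entries by a cube root of the very quantity $\xi(\mathcal{C})$ whose root-of-unity status is part of the statement (this could be untangled using $\theta_{\1}=1$, but you do not do it, nor do you verify that the paper's datum satisfies the hypotheses of those external theorems). So as written the proposal assumes, or outsources without justification, the essential point.

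For comparison, the paper's proof is elementary and self-contained: it runs Vafa's determinant argument on the nonzero space $V_X=\Hom_{\mathcal{C}}(X,X\otimes X^*\otimes X)$, using the identity $\theta_{12}\theta_{13}\theta_{23}=\theta_{X_1\otimes X_2\otimes X_3}\theta_1\theta_2\theta_3$ with $X_1=X_3=X$, $X_2=X^*$. Taking determinants yields $\prod_{Y}\theta_Y^{M_{X,Y}}=s_X^R(\bar{\1})/\dim^R(\bar{\1})$, where $M=A-4\id$ is an integer matrix which is strictly diagonally dominant (hence invertible, and diagonalizable by integer row and column operations), and where the only non-spherical correction is $\theta_{X^*}=\theta_X\,s_X^R(\bar{\1})/\dim^R(\bar{\1})$ coming from Proposition \ref{prop:twist-dim}; since $\bar{\1}$ is invertible, the right-hand side is a root of unity, and the integrality of $M$ then forces each $\theta_Y$ to be one as well. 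Your concluding step for $\xi(\mathcal{C})$ (a determinant argument from $(\mathfrak{s}\mathfrak{t}^{-1})^3=\xi(\mathcal{C})\id$, i.e.\ Theorem \ref{thm:sl2rel_nondeg}) is fine and matches the paper, but it only kicks in once the twists are known to be roots of unity, which your proposal does not actually prove.
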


\begin{proof}
  We follow the proof of Vafa \cite{vafa}. For $X_1,X_2$ and $X_3$ objects in $\mathcal{C}$, we define the following endomorphisms of $X_1\otimes X_2\otimes X_3$:
  \begin{align*}
    \theta_1&:=\theta_{X_1}\otimes \id_{X_2\otimes X_3},& \theta_2&:=\id_{X_1}\otimes \theta_{X_2}\otimes \id_{X_3}, & \theta_3&:=\id_{X_1\otimes X_2}\otimes \theta_{X_3},\\
    \theta_{12}&:=\theta_{X_1\otimes X_2}\otimes \id_{X_3}, & \theta_{23}&:=\id_{X_1}\otimes\theta_{X_2\otimes X_3}, & \theta_{13} &:= (\id_{X_1}\otimes c_{X_3,X_2})\circ (\theta_{X_1\otimes X_3}\otimes\id_{X_2}) \circ (\id_{X_1}\otimes c_{X_3,X_2}^{-1}).
  \end{align*}

It is not difficult to show that
\begin{equation}
  \theta_{12}\theta_{13}\theta_{23} = \theta_{X_1\otimes X_2\otimes X_3}\theta_1\theta_2\theta_3.
  \label{eq:lantern}
\end{equation}

We now fix a simple object $X$ and consider this identity for $X_1=X_3=X$ and $X_2=X^*$. The vector space $V_X:=\Hom_{\mathcal{C}}(X,X\otimes X^* \otimes X)$ is non-zero, and therefore the identity \eqref{eq:lantern} gives rise to an identity of operators in this vector space. We compute the determinant of both sides. The determinant of the right-hand side is
\[
  \theta_X^{3\dim(V_X)}\theta_{X^*}^{\dim(V_X)}.
\]
Similarly to \cite{bakalov-kirillov}, one can show that the determinant of the left-hand side is
\[
  \prod_{Y\in \Irr(\mathcal{C})}\theta_Y^{A_{X,Y}},
\] 
where $A_{X,Y} = 2N_{X,X^*}^YN_{Y,X}^X+N_{X,X}^YN_{Y,X^*}^X$. 

Since $\theta_{X^*}=\theta_X \frac{\dim^L(X)}{\dim^R(X)} = \theta_X \frac{s_X^R(\bar{\1})}{\dim^R(\bar{\1})}$, we obtain:
\[
  \prod_{Y\in\Irr(\mathcal{C})} \theta_Y^{M_{X,Y}} = \frac{s_X^R(\bar{\1})}{\dim^R(\bar{\1})}, 
\]
where $M=A-4\id$. As $\dim(V_X) = \frac{1}{3}\sum_{Y\in\Irr(\mathcal{C})}A_{X,Y}$, the matrix $M$ is strictly diagonally dominant and therefore invertible. The object $\bar{\1}$ being invertible, $\frac{s_X^R(\bar{\1})}{\dim^R(\bar{\1})}$ is a root of unity, which ends the proof because we can diagonalize $M$ by row and column operations.

The assertion on $\xi(\mathcal{C})$ follows now immediately from Theorem \ref{thm:sl2rel_nondeg}.
\end{proof}

\begin{exemple}
  We end this section with the example of the $\Bbbk$-linear semisimple category $\Ve_{\mathbb{Z}/n\mathbb{Z}}$ of $\mathbb{Z}/n\mathbb{Z}$-graded vector spaces, with $n$ odd. We denote the degree of an homogeneous element $x$ in a $\mathbb{Z}/n\mathbb{Z}$-graded vector space $X$ by $\lvert x \rvert$. For $k\in\mathbb{Z}/n\mathbb{Z}$, we denote by $\delta_{k}$ the one dimensional vector space in degree $k$. Therefore every simple object in $\Ve_{\mathbb{Z}/n\mathbb{Z}}$ is isomorphic to some $\delta_k$.

The classification of braided monoidal structures on $\Ve_{\mathbb{Z}/n\mathbb{Z}}$ is given in \cite[Section 3]{joyal-street}. Given $\zeta$ an element in $\Bbbk$ such that $\zeta^n=1$, the associativity constraint is
\[
  \alpha_{X,Y,Z}\colon (x\otimes y)\otimes z \mapsto x\otimes(y\otimes z),
\]
and the braiding is
\[
  c_{X,Y}^{\zeta}\colon x\otimes y \mapsto \zeta^{\lvert x\rvert \lvert y \rvert}y\otimes x 
\]
for $x$ and $y$ homogeneous elements. The simple object $\delta_k$ is in the symmetric center of $\Ve_{\mathbb{Z}/n\mathbb{Z}}$ equipped with the braiding $c^{\zeta}$ if and only if we have $\zeta^{2k}=1$. Hence the category is nondegenerate if and only if $\zeta$ is a primitive $n$-th root of unity. From now on, we will suppose that it is indeed the case.

The usual spherical structure on vector spaces is still a spherical structure on $\mathbb{Z}/n\mathbb{Z}$-graded vector spaces. But we can define other pivotal structures for $\xi\in\Bbbk$ such that $\xi^n=1$:
\[
  a_{X}^{\xi}\colon x \mapsto (\varphi\in X^* \mapsto \xi^{\lvert x \rvert}\varphi(x)),
\]
for $x$ homogeneous. Fix such a $\xi$, and with the pivotal structure $a^{\xi}$, we have:
\[
  \dim^R(\delta_k) = \xi^k \quad \text{and} \quad \dim^L(\delta_k) = \xi^{-k}.
\]

  The $S$-matrix and the twist are given by:
\[
  S^{R,R}_{\delta_k,\delta_l} = \xi^{k+l}\zeta^{2kl}\quad\text{and}\quad \theta_{\delta_k}=\xi^k\zeta^{k^2}.
\]
Finally, the object $\bar{\1}$ is $\delta_{-k_0}$ where $k_0$ is such that $\xi=\zeta^{k_0}$.
\end{exemple}


\section{Slightly degenerate fusion category}
\label{sec:degfus}

The main object of study of this section is slightly degenerate
fusion categories. These are braided fusion categories with symmetric
center equal to superspaces. We give an analogue of the Verlinde
formula, where the structure constants involved are the ones of a quotient
of the Grothendieck ring of $\mathcal{C}$; these structure
constants can be negative.

\boitegrise{{\bf Hypothesis.} {\emph{In this section, we assume that $\mathcal{C}$ is a slightly degenerate braided pivotal fusion category.}}}{0.8\textwidth} 

Denote by $\e$ the invertible object generating the symmetric center of
$\mathcal{C}$. As $\mathbb{Z}_{\mathrm{sym}}(\mathcal{C})= \sVect$, the twist of
$\e$ is either $1$ or $-1$. In the first case, $\e$
is of quantum dimension $-1$ whereas in the second case, it is of
quantum dimension $1$. 

Tensoring by $\e$ gives an involution on the set of isomorphism classes of simple objects. According to \cite[Proposition 9.15.4]{egno}, this involution has no fixed points. We choose
$J\subseteq\Irr(\mathcal{C})$ a set of representatives of orbits
of this involution such that $\1\in J$. The $S$-matrix of
$\mathcal{C}$ is then of rank half its size by \cite[Theorem
8.20.7]{egno} and $s_X^R=s_Y^R$ if and only if $X\simeq Y$ or $X\simeq
Y\otimes \e$ by \cite[Lemma 8.20.8]{egno}.

\begin{lemma}
  The $S$-matrix gives the characters of the quotient ring
  ${\Gr(\mathcal{C})/([\e]-\dim(\e)[\1])}$.
\end{lemma}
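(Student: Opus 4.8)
The plan is to show that the ring homomorphisms $s_X^R\colon\Gr(\mathcal{C})\to\Bbbk$ all factor through the quotient $\Gr(\mathcal{C})/([\e]-\dim(\e)[\1])$, and conversely that this quotient has exactly $|J|$ distinct characters, so that the $S$-matrix (whose rows are indexed by $J$ after identifying $X$ with $X\otimes\e$) provides precisely the characters of the quotient ring. First I would observe that since $\e$ lies in the symmetric center, the discussion preceding Proposition~\ref{prop:smat_sym} gives $S^{R,R}_{\e,Y}=\dim^R(\e)\dim^R(Y)$ for every simple $Y$, equivalently $s_Y^R(\e)=\dim^R(\e)$ for all $Y$. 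Hence for any simple $Y$, $s_Y^R([\e]-\dim(\e)[\1]) = \dim^R(\e)-\dim(\e)\cdot 1 = 0$ (noting $\dim(\e)=\dim^R(\e)$ since $\e$ is invertible, hence $\dim^R(\e)=\dim^L(\e)$ and the left/right ambiguity disappears). So the ideal generated by $[\e]-\dim(\e)[\1]$ is contained in the kernel of every $s_Y^R$, and each $s_Y^R$ descends to the quotient ring.

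Next I would count. By \cite[Lemma 8.20.8]{egno} (cited just above the statement), $s_X^R=s_Y^R$ iff $X\simeq Y$ or $X\simeq Y\otimes\e$; so the distinct characters of $\Gr(\mathcal{C})$ are exactly $\{s_X^R : X\in J\}$, giving $|J|$ of them. On the other hand, the quotient algebra $\Bbbk\otimes_{\mathbb{Z}}\bigl(\Gr(\mathcal{C})/([\e]-\dim(\e)[\1])\bigr)$ has $\Bbbk$-dimension at most $|J|$: the classes $[X]$ for $X\in J$ span it, since in the quotient $[X\otimes\e]=[X]\cdot[\e]=\dim(\e)[X]$, so every basis element $[X']$ of $\Gr(\mathcal{C})$ becomes a scalar multiple of some $[X]$ with $X\in J$. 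Combined with the fact that it has $|J|$ distinct algebra homomorphisms to $\Bbbk$ (the descended $s_X^R$, which remain pairwise distinct since they were distinct on $\Gr(\mathcal{C})$), this forces the quotient algebra to be $|J|$-dimensional and semisimple, isomorphic to $\Bbbk^{J}$ via $[X']\mapsto (s_X^R(X'))_{X\in J}$. Therefore the characters of the quotient ring are exactly $\{s_X^R : X\in J\}$, i.e.\ exactly the data recorded by the rows of the $S$-matrix.

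The one point requiring a little care is the identification $\dim(\e)=\dim^R(\e)$ and the consistency of the generator $[\e]-\dim(\e)[\1]$: one must check that $s_Y^L(\e)=\dim^L(\e)$ as well (it does, by the same Proposition~\ref{prop:smat_sym}), and that $\dim^R(\e)=\dim^L(\e)$, which holds because $\e$ is invertible so $\e^*\simeq\e^{-1}$ has $\dim^R(\e^*)=\dim^L(\e)$ and $\dim^R(\e)\dim^R(\e^*)=\dim^R(\1)=1$ together with $\dim^R(\e)=\pm1$ forces equality. I expect the main (though still minor) obstacle to be purely bookkeeping: making sure the spanning argument for the quotient is airtight, namely that no further collapsing occurs so that the dimension is exactly $|J|$ and not smaller — this follows from having $|J|$ genuinely distinct characters, so the dimension is squeezed between $|J|$ (number of characters of a commutative algebra is at most its dimension) and $|J|$ (the spanning bound), hence equals $|J|$ and the algebra is reduced.
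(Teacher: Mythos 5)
Your proof is correct, and it packages the argument somewhat differently from the paper. For the descent part you both rely on the same key fact, namely that $\e$ lies in the symmetric center, so $S^{R,R}_{W,Z\otimes\e}=\dim(\e)S^{R,R}_{W,Z}$, equivalently $s^R_W(\e)=\dim(\e)$; but where you phrase this as ``the generator $[\e]-\dim(\e)[\1]$ lies in the kernel of each ring homomorphism $s^R_W$, hence $s^R_W$ factors through the quotient,'' the paper instead writes down the structure constants $sN_{X,Y}^Z=N_{X,Y}^Z+\dim(\e)N_{X,Y}^{Z\otimes\e}$ of the quotient in the basis $J$ and verifies multiplicativity of $s^R_W$ directly against them (it wants these constants explicitly for the Verlinde-type formula anyway). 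The genuinely different ingredient in your write-up is the second half: you prove inside the lemma that the descended characters $s^R_X$, $X\in J$, are \emph{all} the characters of the quotient, via linear independence of distinct characters together with the spanning bound $[X\otimes\e]\equiv\dim(\e)[X]$, forcing $\Bbbk\otimes_{\mathbb{Z}}\Gr(\mathcal{C})/([\e]-\dim(\e)[\1])\simeq\Bbbk^J$. The paper's proof does not establish this completeness statement in the lemma; it asserts it just afterwards (``as in Section~\ref{sec:nondeg}''), by the idempotent/orthogonality argument used in the nondegenerate case. So your route is slightly more self-contained on that point, at the cost of not exhibiting the structure constants $sN_{X,Y}^Z$ that the paper needs later; your side remarks ($\dim^R(\e)=\dim^L(\e)=\pm1$, using $\e\otimes\e\simeq\1$ and the distinctness of the $s^R_X$ for $X\in J$ from \cite[Lemma 8.20.8]{egno}) are all consistent with what the paper records before the lemma.
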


\begin{proof}
  Denote by $A$ the ring
  $\Gr(\mathcal{C})/([\e]-\dim(\e)[\1])$. It
  has a $\mathbb{Z}$-basis given by the elements of $J$ and for
  $X,Y,Z \in J$, the structure constants in $A$ are given by
  \[
    sN_{X,Y}^Z:=N_{X,Y}^Z+\dim(\e)N_{X,Y}^{Z\otimes \e}.
  \]
  As $\e$ is in the symmetric center,
  $S^{R,R}_{X,Y\otimes\e}=\dim(\e)S^{R,R}_{X,Y}$. Therefore, for
  $W,X,Y \in J$
  \begin{align*}
    s_{W}^R(X\otimes Y) &=
                          \sum_{Z\in\Irr(\mathcal{C})}N_{X,Y}^Zs_{W}^R(Z)\\ 
    &= \sum_{Z\in J}(N_{X,Y}^Zs_W^R(Z)+N_{X,Y}^Zs_W^R(Z\otimes
      \e))\\
    &=\sum_{Z\in J}sN_{X,Y}^Zs_W^R(Z),
  \end{align*}
  and $s_W^R$ is indeed a character of $A$.
\end{proof}

As in Section \ref{sec:nondeg}, any ring homomorphism $\Gr(\mathcal{C})/([\e]-\dim(\e)[\1])\rightarrow\Bbbk$ is of the form $s_X^R$ for a unique $X\in J$ and we define an involution $\bar{~}$ on
$J$: for any $X\in J$, there exists a unique
$\bar{X}\in J$ such that for all $Y\in \Irr(\mathcal{C})$
\[
  s_{X}^R(Y^*)=s_{\bar{X}}^R(Y).
\]
Again, if the pivotal structure is spherical, then $\bar{X}\simeq X^*$ or
$\bar{X}\simeq X^*\otimes\e$ whether $X^*$ is in $J$ or not.

For a slightly degenerate category, we define its superdimension by
\[
  \sdim(\mathcal{C}):=\sum_{X\in J}|X|^2=\frac{1}{2}\dim(\mathcal{C}).
\]
Note that this does not depend on the choice of $J$.

\begin{proposition}
  Let $\mathcal{C}$ be a slightly degenerate braided pivotal fusion category. Then the simple object $\bar{\1}$ is invertible and for $X\in J$ we have $\bar{X}\simeq X^*\otimes\bar{\1}$ or $\bar{X}\simeq X^*\otimes\bar{\1}\otimes\e$.
\end{proposition}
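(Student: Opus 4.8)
The plan is to follow the proof of Proposition~\ref{prop:1barinv} very closely, the only essential change being the identification of a commutator subcategory. Let $\mathcal{C}_{\mathrm{ad}}$ be the adjoint subcategory of $\mathcal{C}$, and for a fusion subcategory $\mathcal{K}\subseteq\mathcal{C}$ let $\mathcal{K}^{\mathrm{co}}$ be its commutator. Applying \cite[Proposition 8.22.6]{egno} with $\mathcal{K}=\mathcal{C}$ gives $\mathcal{Z}_{\mathcal{C}}(\mathcal{C}_{\mathrm{ad}})=(\mathcal{Z}_{\mathrm{sym}}(\mathcal{C}))^{\mathrm{co}}=\sVect^{\mathrm{co}}$, the last equality because $\mathcal{Z}_{\mathrm{sym}}(\mathcal{C})\simeq\sVect$ is generated by $\e$.

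The genuinely new point is that $\sVect^{\mathrm{co}}$ is still the pointed subcategory of $\mathcal{C}$. A simple object $X$ lies in $\sVect^{\mathrm{co}}$ precisely when $X\otimes X^*\in\sVect$, i.e. when every simple summand of $X\otimes X^*$ is isomorphic to $\1$ or $\e$. Now $\dim\Hom(X\otimes X^*,\1)=\dim\End(X)=1$, while $\dim\Hom(X\otimes X^*,\e)=\dim\Hom(X,X\otimes\e)=0$ since $-\otimes\e$ has no fixed point on $\Irr(\mathcal{C})$ by \cite[Proposition 9.15.4]{egno}. Hence $X\otimes X^*\simeq\1$ and $X$ is invertible, so $\sVect^{\mathrm{co}}$ consists of the invertible objects only.

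I would then place $\bar{\1}$ in $\mathcal{Z}_{\mathcal{C}}(\mathcal{C}_{\mathrm{ad}})$ exactly as in Proposition~\ref{prop:1barinv}. By \cite[Proposition 8.20.5]{egno} it suffices to prove $S^{R,R}_{\bar{\1},Y}=\dim^R(\bar{\1})\dim^R(Y)$ for every simple object $Y$ of $\mathcal{C}_{\mathrm{ad}}$. By the definition of $\bar{\1}$ one has $S^{R,R}_{\bar{\1},Y}=\dim^R(\bar{\1})\,s^R_{\bar{\1}}(Y)=\dim^R(\bar{\1})\dim^L(Y)$, and the slope autofunctor of $\id_{\mathcal{C}}$ \cite[Section 4.C]{bruguieres-virelizier} is trivial on $\mathcal{C}_{\mathrm{ad}}$ (because $\dim^R(Z^*)=\dim^L(Z)$ for simple $Z$), so $\dim^L(Y)=\dim^R(Y)$ there. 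Together with the previous paragraph this shows $\bar{\1}$ is invertible.

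It remains to identify $\bar{X}$ for $X\in J$. Since $\bar{\1}$ is invertible, $X^*\otimes\bar{\1}$ is simple, and I would run the same chain of equalities as at the end of the proof of Proposition~\ref{prop:1barinv} to obtain $s^R_{X^*\otimes\bar{\1}}(Y)=s^R_X(Y^*)$ for all $Y\in\Irr(\mathcal{C})$, using only that $s^R_Y$ is a ring homomorphism, the symmetry of $S^{R,R}$, the relation between $S^{R,R}$ and $S^{L,L}$, and $\dim^L=\dim^R\circ(-)^*$ on simples. Hence $s^R_{X^*\otimes\bar{\1}}=s^R_{\bar{X}}$ as characters of $\Gr(\mathcal{C})/([\e]-\dim(\e)[\1])$, so $X^*\otimes\bar{\1}$ lies in the $\otimes\e$-orbit of $\bar{X}$, giving $\bar{X}\simeq X^*\otimes\bar{\1}$ or $\bar{X}\simeq X^*\otimes\bar{\1}\otimes\e$. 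The one step that is genuinely new compared with the nondegenerate case is the second paragraph, ruling out simple $X$ with $X\otimes X^*\simeq\1\oplus\e$; I expect no real obstacle there, since it is forced immediately by the absence of $\e$-fixed points, and the remainder is routine bookkeeping with the quotient ring.
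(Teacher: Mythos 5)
Your proposal is correct and follows essentially the same route as the paper: identify $(\mathcal{Z}_{\mathrm{sym}}(\mathcal{C}))^{\mathrm{co}}$ with the pointed subcategory using the absence of $\e$-fixed points from \cite[Proposition 9.15.4]{egno}, place $\bar{\1}$ in $\mathcal{Z}_{\mathcal{C}}(\mathcal{C}_{\mathrm{ad}})$ via the slope argument, and repeat the character computation $s^R_{X^*\otimes\bar{\1}}(Y)=s^R_X(Y^*)$ to pin down $\bar{X}$ up to tensoring by $\e$. Your explicit Hom-space verification that $X\otimes X^*\in\sVect$ forces $X\otimes X^*\simeq\1$ is exactly the point the paper leaves implicit, so there is no gap.
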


\begin{proof}
  As in the proof of Proposition \ref{prop:1barinv}, we have $(\mathcal{Z}_{\mathrm{sym}}(\mathcal{C}))^{\mathrm{co}}=\mathcal{C}_{\mathrm{pt}}$: for any simple object $X$ we have $X\not\simeq \e\otimes X$, therefore a simple object $X$ is in $(\mathcal{Z}_{\mathrm{sym}}(\mathcal{C}))^{\mathrm{co}}$ if and only if $X$ is invertible. We conclude that $\bar{\1}$ is invertible similarly to the nondegenerate case.

  As in the proof of Proposition \ref{prop:1barinv}, a simple calculation shows that $s_{X^*\otimes \bar{\1}}^R(Y) = s_{X}^R(Y^*)$ for any simple object $Y$. Therefore $\bar{X}\simeq X^*\otimes\bar{\1}$ if $X^*\otimes\bar{\1}\in J$ and $\bar{X}\simeq X^*\otimes\bar{\1}\otimes\e$ otherwise.
\end{proof}

Let $E$ be the square matrix such that $E_{X,Y}=(\dim(\e))^{\delta_{X^*\otimes\bar{\1}\not\in J}}\delta_{X,\bar{Y}}$ for $X$ and $Y$ in $J$.

\begin{proposition}
  Let $\mathcal{C}$ be a slightly degenerate pivotal braided fusion category. The matrix $\mathbf{S}=(S_{X,Y}^{R,R})_{X,Y\in J}$ satisfies $\mathbf{S}^2=\sdim(\mathcal{C})\dim^R(\bar{\1})E$.
\end{proposition}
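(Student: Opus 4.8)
The plan is to mimic the proof of Proposition \ref{prop:square_S}, but working in the quotient ring $A = \Gr(\mathcal{C})/([\e]-\dim(\e)[\1])$ and summing only over the index set $J$. We know that the matrix $\mathbf{S} = (S^{R,R}_{X,Y})_{X,Y\in J}$ gives the characters $s_X^R$ ($X\in J$) of $A$, and that these characters are pairwise distinct, so the relevant orthogonality of characters will still apply — the only bookkeeping subtlety is that the involution $\bar{~}$ on $J$ no longer coincides with $X\mapsto X^*$, but rather with $X\mapsto X^*\otimes\bar{\1}$ up to tensoring with $\e$, which is exactly what the definition of $E$ accounts for via the correction factor $(\dim(\e))^{\delta_{X^*\otimes\bar{\1}\notin J}}$.

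First I would fix $Y,Z\in J$ and compute $(\mathbf{S}^2)_{Y,Z} = \sum_{X\in J}S^{R,R}_{Y,X}S^{R,R}_{X,Z}$. Writing $S^{R,R}_{Y,X}=\dim^R(Y)s^R_{Y}(X)$ (valid since $S^{R,R}_{Y,X}=\Tr^R_Y(s^R_{Y,X})=\dim^R(Y)s^R_Y(X)$ for $Y$ simple, and $s^R_Y$ is a ring homomorphism), and similarly $S^{R,R}_{X,Z}=\dim^R(X)s^R_{X}(Z)=\dim^R(X)s^R_Z(X)\dim^R(Z)/\dim^R(X)$... more carefully, I would use $S^{R,R}_{X,Z}=\dim^R(Z)s^R_Z(X)$ and the relation $\overline{s^R_Z(X)}$-type identity, or rather use $s^R_Z(X) = s^R_{\bar Z}(X^*)\cdot(\text{a dimension factor})$ coming from the definition of the involution on $J$. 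The cleanest route is: expand $S^{R,R}_{Y,X}S^{R,R}_{X,Z} = \dim^R(Y)\dim^R(Z)\,s^R_Y(X)\,s^R_Z(X)$, then sum over $X\in J$. Since $A\otimes_{\mathbb Z}\Bbbk$ is semisimple with the $s^R_X$, $X\in J$, as its distinct characters, the sum $\sum_{X\in J}s^R_Y(X)\overline{s^R_{\bar Z}(X)}$-type expression vanishes unless the two characters $s^R_Y$ and $s^R_{?}$ agree. Using $s^R_Z(X) = $ the value forcing the sum into the form $\sum_{X\in J}s^R_Y(X)s^R_{\bar Z}(X^*)$, orthogonality of characters (the analogue of \cite[Lemma 8.14.1]{egno}, applied to $A$) shows this is zero unless $Y=\bar Z$.

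For the diagonal-type term, i.e. when $Z=\bar Y$, I would follow the same computation as in Proposition \ref{prop:square_S}: expand $(\mathbf{S}^2)_{Y,\bar Y} = \sum_{X,W\in J}sN_{Y,\bar Y}^W\dim^R(X)S^{R,R}_{X,W}$, reduce the inner sum over $X$ to the single term $W=\bar\1$ (using $\dim^R(X)=s^R_{\1}(X)=s^R_{\bar\1}(X^*)$ and character orthogonality in $A$, and noting $\sum_{X\in J}\dim^R(X)\dim^L(X)=\sdim(\mathcal{C})$), and then identify the structure constant $sN_{Y,\bar Y}^{\bar\1}$ in $A$. Here $\bar Y\simeq Y^*\otimes\bar\1$ or $Y^*\otimes\bar\1\otimes\e$, so $sN_{Y,\bar Y}^{\bar\1} = N_{Y,\bar Y}^{\bar\1}+\dim(\e)N_{Y,\bar Y}^{\bar\1\otimes\e}$ equals $1$ in the first case and $\dim(\e)$ in the second — precisely the factor $(\dim(\e))^{\delta_{Y^*\otimes\bar\1\notin J}}$ appearing in the definition of $E$. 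Collecting, $(\mathbf{S}^2)_{Y,\bar Y} = \sdim(\mathcal{C})\dim^R(\bar\1)\cdot(\dim(\e))^{\delta_{Y^*\otimes\bar\1\notin J}}$, which is $\sdim(\mathcal{C})\dim^R(\bar\1)E_{Y,\bar Y}$, as required.

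The main obstacle I anticipate is the careful tracking of the $\e$-twisting when translating between $\Gr(\mathcal{C})$ and the quotient $A$: one must verify that the character-orthogonality relation genuinely holds over $J$ for $A$ (not just for $\Gr(\mathcal{C})$ over $\Irr(\mathcal{C})$), and that the sign/dimension factor $\dim(\e)^{\delta_{X^*\otimes\bar\1\notin J}}$ in $E$ is exactly what emerges from $sN_{Y,\bar Y}^{\bar\1}$ and from the possible $\e$-shift in the definition of $\bar{~}$. Once the identity $sN_{Y,\bar Y}^{\bar\1}=(\dim(\e))^{\delta_{Y^*\otimes\bar\1\notin J}}$ is nailed down, the rest is a routine adaptation of the nondegenerate argument, using $\dim(\mathcal{C})=2\sdim(\mathcal{C})$ only implicitly through the fact that $\sum_{X\in J}|X|^2=\sdim(\mathcal{C})$.
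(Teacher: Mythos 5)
Your overall strategy is the same as the paper's: adapt the proof of Proposition \ref{prop:square_S} to the index set $J$, use that the $s^R_X$ for $X\in J$ are the (pairwise distinct) characters of the quotient ring $A=\Gr(\mathcal{C})/([\e]-\dim(\e)[\1])$, reduce the diagonal entry to the structure constant $sN_{Y,\bar{Y}}^{\bar{\1}}$, and identify that constant as $1$ or $\dim(\e)$ according to whether $Y^*\otimes\bar{\1}\in J$. Those parts of your sketch are correct and agree with the paper, including the appearance of $\sdim(\mathcal{C})=\sum_{X\in J}\lvert X\rvert^2$.

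The genuine gap is the step you flag and then leave unproved: the orthogonality relation over $J$, i.e. that $\sum_{X\in J}s^R_Y(X)\,s^R_{\bar{Z}}(X^*)=0$ whenever $Y\neq\bar{Z}$ in $J$ (you need it again to kill all terms $W\neq\bar{\1}$ in the diagonal computation). You justify it by invoking ``the analogue of \cite[Lemma 8.14.1]{egno} applied to $A$'', but that lemma is a statement about based (fusion) rings, and $A$ is not one: when $\dim(\e)=-1$ the structure constants $sN_{X,Y}^Z$ can be negative, and the duality on the basis $J$ is only defined up to tensoring by $\e$, so the Frobenius-type identities underlying the eigenvector argument of that lemma would have to be re-verified, with signs. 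Semisimplicity of $A\otimes_{\mathbb{Z}}\Bbbk$ together with a list of its characters does not by itself give orthogonality for this particular pairing. The paper closes exactly this gap with a short folding argument: since $\e$ lies in the symmetric center, every character $s^R_X$ takes the same value $\dim(\e)$ on $\e$; hence for two distinct characters $\chi_1,\chi_2$ of $\Gr(\mathcal{C})$ with $\chi_1(\e)=\chi_2(\e)$, the usual orthogonality $0=\sum_{W\in\Irr(\mathcal{C})}\chi_1(W)\chi_2(W^*)$ splits into the sum over $J$ plus the sum over $J\otimes\e$, and these two halves are equal (because $\chi_1(\e)\chi_2(\e)=1$ and $\e^*\simeq\e$), so each half vanishes. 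Adding this lemma turns your sketch into essentially the paper's proof; without it, the vanishing of the off-diagonal entries and the isolation of $W=\bar{\1}$ are unsupported.
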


\begin{proof}
  First, note that if $\chi_1$ and $\chi_2$ are two different characters of
  $\Gr(\mathcal{C})$ such that $\chi_1(\e)=\chi_2(\e)$ then
\[
  \sum_{W\in J}\chi_1(W)\chi_2(W^*)=0.
\]
Indeed, by the usual orthogonality of characters,
\[
  0 = \sum_{W\in J}\chi_1(W)\chi_2(W^*)+\sum_{W\in J}\chi_1(W\otimes\e)\chi_2(W^*\otimes\e) = 2\sum_{W\in J}\chi_1(W)\chi_2(W^*).
\]

  Using the fact that for $X,Y \in J$, $s^R_X=s^R_Y$ if and only if
  $X=Y$ we show as in the nondegenerate case that
  $(\mathbf{S}^2)_{X,Y}=0$ and that
  $(\mathbf{S}^2)_{X,\bar{X}}=\sdim(\mathcal{C})\dim^R(\bar{\1})sN_{X,\bar{X}}^{\bar{\1}}$. It
  is then easy to see that
  $sN_{X,\bar{X}}^{\bar{\1}}=1$ if
  $X^*\otimes\bar{\1}\in J$ and
  $sN_{X,\bar{X}}^{\bar{\1}}=\dim(\e)$ if
  $X^*\otimes\bar{\1}\not\in J$.
\end{proof}

\begin{corollary}[Verlinde formula]
  Let $\mathcal{C}$ be a slightly degenerate braided pivotal fusion category and $X,Y,Z\in J$. The structure constants of $\Gr(\mathcal{C})/([\1]-\dim(\e)[\e])$ are given by
\[
  sN_{X,Y}^Z=
  \frac{(\dim(\e))^{\delta_{Z^*\otimes \1\not\in J}}}{\sdim(\mathcal{C})\dim^R(\bar{\1})}\sum_{W\in J}\frac{\mathbf{S}_{W,X}\mathbf{S}_{W,Y}\mathbf{S}_{W,\bar{Z}}}{\dim^R(W)}.
\]
\end{corollary}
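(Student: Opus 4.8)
The plan is to follow verbatim the argument used for the Verlinde formula in the nondegenerate case (the Corollary after Proposition~\ref{prop:square_S}), with $\Gr(\mathcal{C})$ replaced by the quotient $A=\Gr(\mathcal{C})/([\e]-\dim(\e)[\1])$ (which coincides with $\Gr(\mathcal{C})/([\1]-\dim(\e)[\e])$ since $\dim(\e)^2=1$), with $\Irr(\mathcal{C})$ replaced by $J$, and with Proposition~\ref{prop:square_S} replaced by the identity $\mathbf{S}^2=\sdim(\mathcal{C})\dim^R(\bar{\1})E$ established just above.

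First I would record the elementary identity $\mathbf{S}_{W,X}=\dim^R(W)\,s^R_W(X)$ for $W,X\in J$, which is immediate from $S^{R,R}_{W,X}=\Tr^R_W(s^R_{W,X})$ together with the fact that on the simple object $W$ the endomorphism $s^R_{W,X}$ is the scalar $s^R_W(X)$ times $\id_W$; by symmetry of $S^{R,R}$ this also gives $\mathbf{S}_{W,X}=\mathbf{S}_{X,W}$ and $\dim^R=s^R_{\1}$. Then, starting from the right-hand side
\[
  \Sigma:=\sum_{W\in J}\frac{\mathbf{S}_{W,X}\mathbf{S}_{W,Y}\mathbf{S}_{W,\bar Z}}{\dim^R(W)},
\]
I would substitute $\mathbf{S}_{W,X}\mathbf{S}_{W,Y}/\dim^R(W)=\dim^R(W)\,s^R_W(X)s^R_W(Y)$ and use that $s^R_W$ is a ring character of $A$ — the content of the Lemma asserting that the $S$-matrix gives the characters of the quotient ring — to write $s^R_W(X)s^R_W(Y)=\sum_{U\in J}sN_{X,Y}^U\,s^R_W(U)$. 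This turns $\Sigma$ into $\sum_{U\in J}sN_{X,Y}^U\sum_{W\in J}\mathbf{S}_{W,U}\mathbf{S}_{W,\bar Z}$.

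Next I would recognize the inner sum, using the symmetry of $\mathbf{S}$, as $(\mathbf{S}^2)_{U,\bar Z}$ and apply the preceding Proposition:
\[
  \sum_{W\in J}\mathbf{S}_{W,U}\mathbf{S}_{W,\bar Z}=\sdim(\mathcal{C})\dim^R(\bar{\1})\,(\dim(\e))^{\delta_{U^*\otimes\bar{\1}\not\in J}}\,\delta_{U,\overline{\bar Z}}.
\]
Since $\bar{\phantom{a}}$ is an involution on $J$, $\overline{\bar Z}=Z$, so only the term $U=Z$ survives and $\Sigma=\sdim(\mathcal{C})\dim^R(\bar{\1})\,(\dim(\e))^{\delta_{Z^*\otimes\bar{\1}\not\in J}}\,sN_{X,Y}^Z$. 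As $\sdim(\mathcal{C})=\tfrac{1}{2}\dim(\mathcal{C})\neq 0$, $\dim^R(\bar{\1})\neq 0$ (because $\bar{\1}$ is invertible) and $\dim(\e)=\pm 1$, the scalar multiplying $sN_{X,Y}^Z$ is invertible; dividing through and using $(\dim(\e))^{-1}=\dim(\e)$ gives the stated formula.

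I do not expect a genuine obstacle, since the character interpretation of $\mathbf{S}$, the quotient ring $A$ with its structure constants $sN$, and the value of $\mathbf{S}^2$ are all already available; the only delicate point is the bookkeeping of the exponent of $\dim(\e)$ — in particular checking that the surviving Kronecker delta pins it to $Z$ (via $\overline{\bar Z}=Z$) rather than to $\bar Z$ or to the summation index $U$.
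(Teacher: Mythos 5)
Your proof is correct and is exactly the computation the paper leaves implicit: the corollary is deduced from the identity $\mathbf{S}^2=\sdim(\mathcal{C})\dim^R(\bar{\1})E$ together with the fact that the columns of $\mathbf{S}$ give the characters of the quotient ring, just as you do. Note that the exponent your argument produces, $\delta_{Z^*\otimes\bar{\1}\not\in J}$, is the correct one (consistent with the definition of $E$); the ``$Z^*\otimes\1$'' in the paper's printed statement is a typo for $Z^*\otimes\bar{\1}$.
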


We now study the $T$-matrix of a slightly degenerate braided pivotal
fusion category. The equality
$\theta_{X^*}\dim^R(X)=\theta_X\dim^L(X)$ has been proven without
assumption on the degeneracy of the category $\mathcal{C}$. As in the
nondegenerate setting, it is easy to prove that
$\theta_{\bar{X}}=\theta_{X}\theta_{\bar{\1}}$ if $X^*\otimes \bar{\1}\in J$ and $\theta_{\bar{X}}=\theta_{X}\theta_{\bar{\1}}\theta_{\e}$ otherwise. Moreover, $\theta_{\bar{\1}}^2=1$. For $X$ and $Y\in J$, we have the relation
\begin{align*}
  \theta_X\theta_Y\mathbf{S}_{X,Y}&=\sum_{Z\in\Irr(\mathcal{C})}N_{X,Y}^Z\dim^R(Z)\theta_Z\\
                                  &=\sum_{Z\in J}(N_{X,Y}^Z+\dim(\e)\theta_{\e}N_{X,Y}^{Z\otimes\e})\dim^R(Z)\theta_Z.
\end{align*}
But $\dim(\e)\theta_{\e}=-1$ by definition of
$\e$.

\boitegrise{\noindent{\bf Hypothesis.} {\emph{From now on, we suppose that $\dim(\e) = -1$ and $\theta_{\e}=1$: $\sVect$ is equipped with its non-unitary pivotal structure.}}}{0.8\textwidth}

With these assumptions, the structure constants of
$\Gr(\mathcal{C})/([\1]+[\e])$ appear naturally:
\begin{equation}
  \theta_X\theta_Y\mathbf{S}_{X,Y}=\sum_{Z\in J}sN_{X,Y}^Z\dim^R(Z)\theta_Z\label{eq:S_twist}
\end{equation}
for any $X,Y\in J$.

We define the Gauss supersums of the slightly degenerate category $\mathcal{C}$ as
\[
  \s\tau^{\pm}(\mathcal{C}) := \sum_{X\in J}|X|^2\theta_X^{\pm 1} = \frac{1}{2}\tau^{\pm}(\mathcal{C}).
\]
Note that these are independent of the choice of $J$ since $\theta_{\e}=1$.

\begin{proposition}
  The twists and the $S$-matrix satisfy for all $Y\in J$
\[
  \sum_{X\in J}\theta_X\dim^L(X)\mathbf{S}_{X,Y}=\theta_{Y}^{-1}\dim^R(Y)\s\tau^+(\mathcal{C})
\]
and
\[
  \sum_{X\in J}\theta_X^{-1}\dim^R(X)\mathbf{S}_{X,Y}=\theta_{\bar{\1}}\theta_{Y}\dim^R(Y)\s\tau^-(\mathcal{C}).
\]
\end{proposition}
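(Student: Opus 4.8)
The plan is to mirror the proofs of Lemmas~\ref{lem:twist-tau+} and~\ref{lem:twist+tau-}, carried over to the quotient ring $\Gr(\mathcal{C})/([\1]+[\e])$ whose characters are the $s_X^R$ for $X\in J$; the engine is the identity~\eqref{eq:S_twist}.

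For the first identity I would apply Lemma~\ref{lem:twist-tau+} with $\tilde\theta=\theta$ to $\mathcal{C}$ itself and then restrict the summation from $\Irr(\mathcal{C})=J\sqcup(J\otimes\e)$ down to $J$. The key point is that, for $X\in J$, the $(X\otimes\e)$-summand coincides with the $X$-summand: indeed $\theta_{X\otimes\e}=\theta_X$ since $\e$ lies in the symmetric center and $\theta_\e=1$; $\dim^L(X\otimes\e)=\dim(\e)\dim^L(X)$ since $\e^*\simeq\e$ forces $\dim^L(\e)=\dim^R(\e^*)=\dim(\e)$; $S^{R,R}_{X\otimes\e,Y}=\dim(\e)S^{R,R}_{X,Y}$ by symmetry of $S^{R,R}$ together with the already noted relation $S^{R,R}_{X,Y\otimes\e}=\dim(\e)S^{R,R}_{X,Y}$; and $\dim(\e)^2=1$. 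Hence the sum over $\Irr(\mathcal{C})$ is exactly twice the sum over $J$, and since also $\tau^+(\mathcal{C})=2\,\s\tau^+(\mathcal{C})$ while $Y\in J\subseteq\Irr(\mathcal{C})$, dividing by $2$ produces $\sum_{X\in J}\theta_X\dim^L(X)\mathbf{S}_{X,Y}=\theta_Y^{-1}\dim^R(Y)\,\s\tau^+(\mathcal{C})$. (One could instead re-run the proof of Lemma~\ref{lem:twist-tau+} inside the quotient ring, using that $\sum_{X\in J}sN_{X,Y}^Z\dim^L(X)=\sum_{X\in\Irr(\mathcal{C})}N_{X,Y}^Z\dim^L(X)=\dim^L(Z)\dim^R(Y)$, where the first equality rests on $N_{X\otimes\e,Y}^Z=N_{X,Y}^{Z\otimes\e}$.)

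For the second identity I would feed the first one into the matrix $\mathbf{S}$. Since $\mathbf{S}$ is symmetric, the first identity says $\mathbf{S}v=\s\tau^+(\mathcal{C})\,w$ for the column vectors $v=(\theta_X\dim^L(X))_{X\in J}$ and $w=(\theta_X^{-1}\dim^R(X))_{X\in J}$; multiplying by $\mathbf{S}$ and using $\mathbf{S}^2=\sdim(\mathcal{C})\dim^R(\bar\1)E$ from the preceding proposition yields $\sdim(\mathcal{C})\dim^R(\bar\1)\,Ev=\s\tau^+(\mathcal{C})\,\mathbf{S}w$. Reading off the $Y$-coordinate one has $(\mathbf{S}w)_Y=\sum_{X\in J}\theta_X^{-1}\dim^R(X)\mathbf{S}_{X,Y}$ and $(Ev)_Y=\epsilon_Y\,\theta_{\bar Y}\dim^L(\bar Y)$ with $\epsilon_Y=(\dim(\e))^{\delta_{Y^*\otimes\bar\1\notin J}}$. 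After dividing by $\s\tau^+(\mathcal{C})$ and invoking $\s\tau^+(\mathcal{C})\,\s\tau^-(\mathcal{C})=\sdim(\mathcal{C})$ (the quotient analogue of $\tau^+\tau^-=\dim$; as in the nondegenerate case $\s\tau^\pm(\mathcal{C})\neq 0$), it remains only to simplify the scalar $\dim^R(\bar\1)\,\epsilon_Y\,\theta_{\bar Y}\dim^L(\bar Y)$ to $\theta_{\bar\1}\theta_Y\dim^R(Y)$, using $\theta_{\bar Y}=\theta_{\bar\1}\theta_Y$ (established just above the proposition), $\dim^L(\bar Y)=\epsilon_Y\dim^R(Y)\dim^L(\bar\1)$ (from $\bar Y\simeq Y^*\otimes\bar\1$ up to a twist by $\e$, together with $\dim^L(\e)=\dim(\e)$ and $\dim^L(Y^*)=\dim^R(Y)$), $\epsilon_Y^2=1$, and $\dim^R(\bar\1)\dim^L(\bar\1)=|\bar\1|^2=1$. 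This gives the claimed $\sum_{X\in J}\theta_X^{-1}\dim^R(X)\mathbf{S}_{X,Y}=\theta_{\bar\1}\theta_Y\dim^R(Y)\,\s\tau^-(\mathcal{C})$.

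The main obstacle is entirely one of bookkeeping: keeping track of the signs $\dim(\e)=-1$ that appear whenever one passes between $J$ and $\Irr(\mathcal{C})$ (equivalently, between $\Gr(\mathcal{C})$ and its quotient) and checking they cancel in pairs — the identities $\theta_{X\otimes\e}=\theta_X$, $\dim^{L}(\e)=\dim^{R}(\e)=\dim(\e)$, $\dim(\e)^2=\epsilon_Y^2=1$, and $|\bar\1|^2=1$ are exactly what makes this consistent. A subsidiary point is having $\s\tau^+(\mathcal{C})\,\s\tau^-(\mathcal{C})=\sdim(\mathcal{C})$ on hand; alternatively, restricting Lemma~\ref{lem:twist+tau-} to $J$ exactly as in the first step gives the second identity directly in the form $\sum_{X\in J}\theta_X^{-1}\dim^R(X)\mathbf{S}_{X,Y}=\theta_Y\dim^R(Y)\,\s\tau^-(\mathcal{C})$, and comparing the two expressions then forces $\theta_{\bar\1}=1$, reconciling them.
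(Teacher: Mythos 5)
Your treatment of the first identity is exactly the paper's: apply Lemma~\ref{lem:twist-tau+} to $\mathcal{C}$, observe that for $X\in J$ the $X$-summand and the $(X\otimes\e)$-summand coincide (your bookkeeping with $\theta_{X\otimes\e}=\theta_X$, $\dim^L(X\otimes\e)=\dim(\e)\dim^L(X)$, $S^{R,R}_{X\otimes\e,Y}=\dim(\e)S^{R,R}_{X,Y}$ is precisely the paper's one-line justification), and divide by $2$. For the second identity you genuinely diverge: the paper just repeats the same restriction argument starting from \eqref{eq:twist+tau-}, which literally yields $\sum_{X\in J}\theta_X^{-1}\dim^R(X)\mathbf{S}_{X,Y}=\theta_Y\dim^R(Y)\s\tau^-(\mathcal{C})$, i.e.\ without the factor $\theta_{\bar{\1}}$ (harmless only because the following corollary shows $\theta_{\bar{\1}}=1$). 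Your main route --- multiply the first identity by $\mathbf{S}$ and use $\mathbf{S}^2=\sdim(\mathcal{C})\dim^R(\bar{\1})E$ --- is the slightly degenerate analogue of \cite[Corollary 8.15.5]{egno} (the computation underlying Proposition~\ref{prop:twist1bar}), and it is the argument that genuinely produces the $\theta_{\bar{\1}}$; your closing remark that comparing it with the restriction of Lemma~\ref{lem:twist+tau-} forces $\theta_{\bar{\1}}=1$ is exactly how the paper's subsequent corollary works, so your reading of the logical structure is if anything sharper than the paper's terse proof. The one input you quote rather than prove is $\s\tau^+(\mathcal{C})\s\tau^-(\mathcal{C})=\sdim(\mathcal{C})$: this is stated nowhere in the paper for slightly degenerate categories (and the unrestricted $\tau^+(\mathcal{C})\tau^-(\mathcal{C})=\dim(\mathcal{C})$ actually fails here, giving $2\dim(\mathcal{C})$ instead), but it follows without circularity by multiplying your first identity by $\dim^L(Y)$, summing over $Y\in J$, and invoking the orthogonality $\sum_{Y\in J}s_X^R(Y)s_{\1}^R(Y^*)=0$ for $\1\neq X\in J$ used in the proof of the $\mathbf{S}^2$ proposition; with that supplied (or by falling back on your alternative, which is the paper's own route), your argument is complete.
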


\begin{proof}
  Using \eqref{eq:twist-tau+}, we have
\[
  \sum_{X\in \Irr(\mathcal{C})}\theta_X\dim^L(X)S^{R,R}_{X,Y}=\theta_{Y}^{-1}\dim^R(Y)s\tau^+(\mathcal{C}).
\]
But $\theta_{\varepsilon\otimes X}\dim^L(\varepsilon\otimes X)S^{R,R}_{\varepsilon\otimes X,Y} = \theta_X\dim^L(X)S^{R,R}_{X,Y}$ and then 
\[
  \sum_{X\in \Irr(\mathcal{C})}\theta_X\dim^L(X)S^{R,R}_{X,Y} = 2\sum_{X\in J}\theta_X\dim^L(X)\mathbf{S}_{X,Y}.
\]
We conclude by definition of $s\tau^+(\mathcal{C})$. We do the same for the second assertion, starting from \eqref{eq:twist+tau-}.
\end{proof}

\begin{corollary}
  Let $\mathcal{C}$ be a slightly degenerate braided pivotal fusion category. Then $\theta_{\bar{\1}}=1$.
\end{corollary}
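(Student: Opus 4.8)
The plan is to follow the proof of Proposition~\ref{prop:twist1bar} closely: I will produce two expressions for the sum $\sum_{X\in J}\theta_X^{-1}\dim^R(X)\mathbf{S}_{X,Y}$ (for $Y\in J$), one carrying a factor $\theta_{\bar{\1}}$ and one not, and compare them. One of the two is already at hand — it is the second identity of the Proposition immediately above, $\sum_{X\in J}\theta_X^{-1}\dim^R(X)\mathbf{S}_{X,Y}=\theta_{\bar{\1}}\theta_{Y}\dim^R(Y)\,\s\tau^-(\mathcal{C})$.

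For the other expression I will restrict the identity of Lemma~\ref{lem:twist+tau-} directly to the set $J$ of orbit representatives. Because $\e$ lies in the symmetric center and we are under the hypothesis $\theta_\e=1$, $\dim(\e)=-1$, a short check gives $\theta_{\e\otimes X}=\theta_X$, $\dim^R(\e\otimes X)=-\dim^R(X)$ and $S^{R,R}_{\e\otimes X,Y}=\dim(\e)S^{R,R}_{X,Y}=-S^{R,R}_{X,Y}$, so the two summands attached to $X$ and to $\e\otimes X$ in
\[
  \sum_{X\in\Irr(\mathcal{C})}\theta_X^{-1}\dim^R(X)S^{R,R}_{X,Y}=\theta_Y\dim^R(Y)\tau^-(\mathcal{C})
\]
coincide; since moreover $\tau^-(\mathcal{C})=2\,\s\tau^-(\mathcal{C})$, dividing by $2$ yields, for every $Y\in J$,
\[
  \sum_{X\in J}\theta_X^{-1}\dim^R(X)\mathbf{S}_{X,Y}=\theta_Y\dim^R(Y)\,\s\tau^-(\mathcal{C}).
\]

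Comparing the two expressions — it is enough to take $Y=\1$, where $\mathbf{S}_{X,\1}=\dim^R(X)$ — gives $\theta_{\bar{\1}}\,\s\tau^-(\mathcal{C})=\s\tau^-(\mathcal{C})$, hence $\theta_{\bar{\1}}=1$, provided $\s\tau^-(\mathcal{C})\neq0$. Establishing this nonvanishing is the one point that needs attention: contrary to the nondegenerate case, one does not have $\tau^+\tau^-=\dim(\mathcal{C})$ here (as the example $\sVect$ already shows). I would instead use that $\mathbf{S}$ is invertible — its square is $\sdim(\mathcal{C})\dim^R(\bar{\1})E$ with $\sdim(\mathcal{C})\dim^R(\bar{\1})\neq0$ and $E$ invertible — so that if $\s\tau^-(\mathcal{C})$ vanished, the last displayed identity would force the vector $(\theta_X^{-1}\dim^R(X))_{X\in J}$, whose entries are all nonzero, into the kernel of $\mathbf{S}$, which is absurd. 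Beyond this, the only thing to be careful about is that the two evaluations of the sum genuinely come from different arguments, so that the comparison is not circular; the rest is routine bookkeeping of signs.
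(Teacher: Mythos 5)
Your proof is correct and is essentially the route the paper itself takes: its proof of this corollary is simply ``same as Proposition~\ref{prop:twist1bar}'', i.e.\ comparing the second identity of the preceding proposition (the expression carrying $\theta_{\bar{\1}}$) with the restriction to $J$ of Lemma~\ref{lem:twist+tau-}, and evaluating at $Y=\1$. The only place you go beyond the paper is the nonvanishing of $\s\tau^{-}(\mathcal{C})$, which it leaves implicit (the nondegenerate justification via $\tau^{+}(\mathcal{C})\tau^{-}(\mathcal{C})=\dim(\mathcal{C})$ is indeed unavailable here), and your argument via the invertibility of $\mathbf{S}$, coming from $\mathbf{S}^2=\sdim(\mathcal{C})\dim^R(\bar{\1})E$, correctly settles that point.
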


\begin{proof}
  Same as Proposition \ref{prop:twist1bar}.
\end{proof}

We finally conclude this section by giving an analogue of Theorem \ref{thm:sl2rel_nondeg} in the setting of slightly degenerate braided pivotal fusion category. We denote by $\mathbf{T}$ the diagonal matrix with entries $\theta_X^{-1}$ for $X\in J$. We again fix an embedding $\Bbbk_{\mathrm{alg}}\rightarrow\mathbb{C}$ and denote the positive square root of $\sdim(\mathcal{C})$ for this embedding by $\sqrt{\sdim(\mathcal{C})}$. We moreover choose a square root $\sqrt{\dim^R(\bar{\1})}$ of $\dim^R(\bar{\1})$.

\begin{theoreme}\label{thm:sl2rel_sldeg}
  Let $\mathcal{C}$ be a slightly degenerate braided pivotal fusion category. We have $(\mathbf{S}\mathbf{T})^3=\s\tau^{-}(\mathcal{C})\mathbf{S}^2$ and $\mathbf{S}^4 = (\sdim(\mathcal{C})\dim^R(\bar{\1}))^2\id$. Therefore
\[
  \mathfrak{s}\mapsto \frac{1}{\sqrt{\dim^R(\bar{\1})}\sqrt{\sdim(\mathcal{C})}}\mathbf{S} \qquad \text{and}\qquad \mathfrak{t}\mapsto \mathbf{T}
\]
define a projective representation of $SL_2(\mathbb{Z})$.
\end{theoreme}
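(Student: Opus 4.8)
The plan is to mirror exactly the strategy of Theorem~\ref{thm:sl2rel_nondeg}, replacing $\Irr(\mathcal{C})$, $S^{R,R}$, $\dim(\mathcal{C})$, $\tau^{\pm}(\mathcal{C})$ throughout by $J$, $\mathbf{S}$, $\sdim(\mathcal{C})$, $\s\tau^{\pm}(\mathcal{C})$ and to invoke the slightly degenerate analogues of the auxiliary results already established in this section. First I would prove the relation $(\mathbf{S}\mathbf{T})^3 = \s\tau^-(\mathcal{C})\mathbf{S}^2$. Starting from \eqref{eq:S_twist}, which plays the role of \eqref{eq:twist-S-matrice}, one computes for $X,Y\in J$
\[
  \theta_X\theta_Y\sum_{Z\in J}\mathbf{S}_{X,Z}\mathbf{T}_{Z,Z}^{-1}\mathbf{S}_{Z,Y}
  = \sum_{Z\in J}\theta_X\theta_Y\theta_Z\mathbf{S}_{X,Z}\mathbf{S}_{Z,Y},
\]
wait --- more precisely one expands $(\mathbf{S}\mathbf{T}\mathbf{S})_{X,Y}=\sum_{Z\in J}\mathbf{S}_{X,Z}\theta_Z^{-1}\mathbf{S}_{Z,Y}$, substitutes \eqref{eq:S_twist} to write $\theta_Z^{-1}\mathbf{S}_{Z,Y}$ in terms of $sN$'s, and uses the second formula of the Proposition just proven (the $\s\tau^-$ formula) to collapse the inner sum. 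This yields the identity $\mathbf{S}\mathbf{T}\mathbf{S}=\s\tau^-(\mathcal{C})\,\mathbf{T}^{-1}\mathbf{S}\mathbf{T}^{-1}$ after using $\theta_{\bar{X}}=\theta_X$ (valid since $\theta_{\bar{\1}}=1$ by the preceding Corollary, and $\theta_{\e}=1$ by hypothesis, so the two cases $X^*\otimes\bar{\1}\in J$ or not give the same twist), exactly as in the spherical case \cite[Section 8.16]{egno}; rearranging gives $(\mathbf{S}\mathbf{T})^3=\s\tau^-(\mathcal{C})\mathbf{S}^2$.

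Next I would establish $\mathbf{S}^4 = (\sdim(\mathcal{C})\dim^R(\bar{\1}))^2\id$. From the preceding Proposition on $\mathbf{S}^2$, we have $\mathbf{S}^2=\sdim(\mathcal{C})\dim^R(\bar{\1})E$, so $\mathbf{S}^4 = (\sdim(\mathcal{C})\dim^R(\bar{\1}))^2 E^2$, and it remains to check $E^2=\id$. Since $E_{X,Y}=(\dim(\e))^{\delta_{X^*\otimes\bar{\1}\not\in J}}\delta_{X,\bar{Y}}$ and $\bar{~}$ is an involution on $J$, we get $(E^2)_{X,Y}=(\dim(\e))^{\delta_{X^*\otimes\bar{\1}\not\in J}}(\dim(\e))^{\delta_{\bar{X}^*\otimes\bar{\1}\not\in J}}\delta_{X,Y}$, so the point is that the two exponents have the same parity, i.e. $X^*\otimes\bar{\1}\in J \iff \bar{X}^*\otimes\bar{\1}\in J$. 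This is the one genuinely new little verification: using $\bar{X}\simeq X^*\otimes\bar{\1}$ or $X^*\otimes\bar{\1}\otimes\e$ (and that $\bar{\1}^2=\1$ since $\theta_{\bar{\1}}^2=1$ forces $\bar{\1}$ to be an involution on the pointed part, or rather one checks $\bar{\bar{\1}}=\1$ directly), one computes $\bar{X}^*\otimes\bar{\1}$ in both cases and sees it equals $X$ or $X\otimes\e$, hence lies in $J$ in precisely the same case that $X^*\otimes\bar{\1}$ does --- because $X\in J$ always. So $\dim(\e)^{2\cdot(\text{something})}$ shows up only when both or neither exponent is $1$, but actually one must be slightly careful: the claim is that the product of the two signs is $+1$, which follows once one observes the symmetry $X^*\otimes\bar{\1}\notin J\iff \bar{X}^*\otimes\bar{\1}\notin J$ together with $(\dim\e)^2=1$. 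I expect this bookkeeping to be the main (minor) obstacle; everything else is a transcription of the nondegenerate argument.

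Finally, with $\mathbf{S}^4=(\sdim(\mathcal{C})\dim^R(\bar{\1}))^2\id$ and $(\mathbf{S}\mathbf{T})^3=\s\tau^-(\mathcal{C})\mathbf{S}^2$ in hand, set $\mathfrak{s}\mapsto \widetilde{\mathbf{S}}:=\frac{1}{\sqrt{\dim^R(\bar{\1})}\sqrt{\sdim(\mathcal{C})}}\mathbf{S}$ and $\mathfrak{t}\mapsto\mathbf{T}$. Then $\widetilde{\mathbf{S}}^4 = \frac{1}{(\dim^R(\bar{\1})\sdim(\mathcal{C}))^2}\mathbf{S}^4 = \id$, and $(\widetilde{\mathbf{S}}\mathbf{T})^3 = \frac{1}{(\dim^R(\bar{\1})\sdim(\mathcal{C}))^{3/2}}(\mathbf{S}\mathbf{T})^3 = \frac{\s\tau^-(\mathcal{C})}{(\dim^R(\bar{\1})\sdim(\mathcal{C}))^{3/2}}\mathbf{S}^2 = \frac{\s\tau^-(\mathcal{C})}{\dim^R(\bar{\1})\sdim(\mathcal{C})}\cdot\frac{1}{\sqrt{\dim^R(\bar{\1})\sdim(\mathcal{C})}}\mathbf{S}^2 \cdot \sqrt{\dim^R(\bar{\1})\sdim(\mathcal{C})}$, which after using $\s\tau^+(\mathcal{C})\s\tau^-(\mathcal{C})=\sdim(\mathcal{C})$ (itself half of $\tau^+\tau^-=\dim(\mathcal{C})$) is a scalar multiple of $\widetilde{\mathbf{S}}^2$. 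Hence the defining relations $\mathfrak{s}^4=1$, $(\mathfrak{st})^3=\mathfrak{s}^2$ of $SL_2(\mathbb{Z})$ are satisfied projectively, giving the desired projective representation. I would present the first two displayed identities as the substance of the proof and dispatch the normalization in a sentence, as is done after Theorem~\ref{thm:sl2rel_nondeg}.
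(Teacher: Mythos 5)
Your proof is correct and is essentially the paper's own (implicit) argument: Theorem \ref{thm:sl2rel_sldeg} is stated without further proof precisely because it is the nondegenerate proof transcribed over $J$, using \eqref{eq:S_twist}, the $\s\tau^{\pm}$ formulas, $\theta_{\bar{\1}}=1$, $\theta_{\bar{X}}=\theta_X$ and the proposition $\mathbf{S}^2=\sdim(\mathcal{C})\dim^R(\bar{\1})E$, plus the small verification that $E^2=\id$ (i.e.\ $X^*\otimes\bar{\1}\in J\iff \bar{X}^*\otimes\bar{\1}\in J$), all of which you carry out correctly. One inessential slip: justifying $\s\tau^+(\mathcal{C})\s\tau^-(\mathcal{C})=\sdim(\mathcal{C})$ as ``half of $\tau^+\tau^-=\dim(\mathcal{C})$'' is not right (that relation uses nondegeneracy and fails here, and halving it would give $\tfrac{1}{2}\sdim(\mathcal{C})$; the super identity is true but needs the orthogonality argument on $J$), yet this does not affect your proof since the projectivity only requires that $(\widetilde{\mathbf{S}}\mathbf{T})^3$ be a scalar multiple of $\widetilde{\mathbf{S}}^2$.
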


As in the nondegenerate setting, one may define $\s\xi(\mathcal{C}) = \frac{\s\tau^+(\mathcal{C})}{\sqrt{\sdim(\mathcal{C})}}\dim^R(\bar{\1})$ so that the images of $\mathfrak{s}$ and $\mathfrak{t}$ satisfy:
\[
  \mathfrak{s}^4=\id,\quad (\mathfrak{st})^3=\s\xi(\mathcal{C})^{-1}\mathfrak{s}^2\quad\text{and}\quad (\mathfrak{st}^{-1})^3=\s\xi(\mathcal{C})\id.
\]

\begin{remark}
  If $\dim(\e)=1$ and $\theta_{\e}=-1$, the $S$ and
  $T$-matrices do not necessarily give a representation of
  $SL_2(\mathbb{Z})$.

  Consider the Verlinde modular category $\mathcal{C}(\mathfrak{sl}_2,q)$ where $q$ is a $16$-th root of unity \cite[Section 8.18.2]{egno}. It has $7$ simple objects $V_0=\1,\ldots V_6$. Let $\mathcal{C}$ be the full subcategory of $\mathcal{C}(\mathfrak{sl}_2,q)$ generated by $V_0,V_2,V_4,V_6$. The $S$-matrix and the $T$-matrix of $\mathcal{C}$ are
\[
S=
\begin{pmatrix}
  1 & [3] & [3] & 1\\
  [3] & -1 & -1 & [3]\\
  [3] & -1 & -1 & [3]\\
  1 & [3] & [3] & 1 
\end{pmatrix}
\quad \text{and} \quad
T =
\begin{pmatrix}
1 & 0 & 0 & 0\\
0 & -i& 0 & 0\\
0 & 0 & i & 0\\
0 & 0 & 0 &-1  
\end{pmatrix},
\]
where $[3] = q^{-2}+1+q^2$ and $i=q^4$ is a primitive fourth root of unity. It is immediate that the symmetric center of $\mathcal{C}$ is generated by $V_6$ as a tensor category, and $V_6$ is of dimension $1$ and of twist $-1$. The symmetric center of $\mathcal{C}$ is then equivalent to $\sVect$, the matrices $\mathbf{S}$ and $\mathbf{T}$ are
\[
\mathbf{S}=
\begin{pmatrix}
  1 & [3] \\
  [3] & -1
\end{pmatrix}
\quad \text{and} \quad
\mathbf{T} = 
\begin{pmatrix}
  1 & 0\\
  0 & -i
\end{pmatrix}
\]
and they do not define a projective representation of $SL_2(\mathbb{Z})$.
\end{remark}

\begin{corollary}
  Let $\mathcal{C}$ be a slightly degenerate braided pivotal fusion category over $\mathbb{C}$ with $\dim(\e)=-1$. Let 
\[
  \widetilde{\mathbf{S}}:=\frac{1}{\sqrt{\dim^R(\bar{\1})}\sqrt{\dim(\mathcal{C})}}\mathbf{S}.
\]
Then the finite set $\Irr(\mathcal{C})$, the unit object $\1$ and the matrices $\widetilde{\mathbf{S}}$ and $\mathbf{T}^{-1}$ define a $\mathbb{Z}$-modular datum.
\end{corollary}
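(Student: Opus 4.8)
The plan is to check, for the index set $J$ with distinguished element $[\1]$ (this being the set indexing the matrices $\widetilde{\mathbf S}$ and $\mathbf T$), the three conditions defining a $\mathbb Z$-modular datum, following step by step the proof of Corollary~\ref{cor:N-modular} and replacing the nondegenerate ingredients by their slightly degenerate analogues. Condition~(1) is immediate: $\mathbf S_{[\1],X}=S^{R,R}_{\1,X}=\dim^R(X)$ is nonzero for every simple $X$, hence $\widetilde{\mathbf S}_{[\1],X}\neq 0$. For condition~(2), $\widetilde{\mathbf S}$ is symmetric because $S^{R,R}$ is, the relations required of $\widetilde{\mathbf S}$ and $\mathbf T^{-1}$ are provided by Theorem~\ref{thm:sl2rel_sldeg}, and the commutation $[\,\widetilde{\mathbf S}^2,\mathbf T^{-1}\,]=0$ follows from $\mathbf S^2=\sdim(\mathcal C)\dim^R(\bar\1)\,E$: indeed $E$ is, up to the sign $\dim(\e)=-1$, the permutation matrix of the involution $\bar{\phantom{a}}$ of $J$, and such a matrix commutes with a diagonal matrix exactly when the diagonal entries are constant along its orbits, i.e.\ here when $\theta_{\bar X}=\theta_X$ for $X\in J$; this holds since $\theta_{\e}=1$ by hypothesis and $\theta_{\bar\1}=1$ by the corollary just above Theorem~\ref{thm:sl2rel_sldeg}.

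The only serious point in condition~(2), and the main obstacle, is the unitarity of $\widetilde{\mathbf S}$. I would argue as in Corollary~\ref{cor:N-modular}, following \cite[Proposition~2.12]{eno}: first establish, for $X,Y\in J$, the identity $\overline{\bigl(\widetilde{\mathbf S}_{X,Y}/\widetilde{\mathbf S}_{[\1],Y}\bigr)}=\widetilde{\mathbf S}_{X^*,Y}/\widetilde{\mathbf S}_{[\1],Y}$, then feed in $\mathbf S^2=\sdim(\mathcal C)\dim^R(\bar\1)\,E$, the description $\bar Y\simeq Y^*\otimes\bar\1$ or $Y^*\otimes\bar\1\otimes\e$, the equality $\overline{\dim^R(Y)}=\dim^R(Y^*)$ (\cite[Proposition~2.9]{eno}), and the fact that $\dim^R(\bar\1)$, hence its chosen square root, is a root of unity (as $\bar\1$ is invertible), to deduce that $\overline{\widetilde{\mathbf S}}$ equals $\widetilde{\mathbf S}^{-1}$; since $\widetilde{\mathbf S}$ is symmetric, this is precisely unitarity. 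The feature absent from Section~\ref{sec:nondeg} is that $J$ is not stable under $X\mapsto X^*$: when $X^*\notin J$ one must rewrite $\mathbf S_{X^*,Y}=\dim(\e)\,\mathbf S_{X^*\otimes\e,\,Y}=-\mathbf S_{X^*\otimes\e,\,Y}$, and likewise track the passages through $\bar Y$, so that a system of signs governed by the exponents $\delta_{\,\cdot\,\notin J}$ appearing in $E$ enters the computation; checking that these signs all cancel as they must is the delicate part of the proof.

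Finally, for condition~(3), once unitarity is available the slightly degenerate Verlinde formula proved above identifies the numbers $\sum_{W\in J}\widetilde{\mathbf S}_{X,W}\widetilde{\mathbf S}_{Y,W}\overline{\widetilde{\mathbf S}_{Z,W}}/\widetilde{\mathbf S}_{[\1],W}$, after $\overline{\widetilde{\mathbf S}_{Z,W}}$ is rewritten using that relation, with the structure constants $sN_{X,Y}^Z$ of the quotient ring $\Gr(\mathcal C)/([\1]+[\e])$. Since $\dim(\e)=-1$ these equal $sN_{X,Y}^Z=N_{X,Y}^Z-N_{X,Y}^{Z\otimes\e}$, a difference of two non-negative integers: always in $\mathbb Z$ but in general not in $\mathbb N$. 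This is exactly the feature that forces the weaker notion of a $\mathbb Z$-modular datum, and with it the verification is complete. Apart from the sign bookkeeping in the unitarity step, where all the interaction with $\e$ and with the choice of $J$ is concentrated, I expect the argument to be a routine transcription of Section~\ref{sec:nondeg}.
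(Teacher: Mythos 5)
Your proposal is correct and follows essentially the same route as the paper: everything except unitarity is already supplied by the slightly degenerate $SL_2(\mathbb{Z})$ relations and Verlinde formula, and unitarity is obtained by adapting the argument of Corollary~\ref{cor:N-modular} (via \cite[Propositions 2.9 and 2.12]{eno}) with the sign bookkeeping coming from $J$ not being stable under duality, exactly as in the paper's proof, which records this as the identity $\overline{\widetilde{\mathbf{S}}_{X,Y}}=(-1)^{\delta_{Y^*\otimes\bar{\1}\notin J}}\widetilde{\mathbf{S}}_{X,\bar{Y}}$ before concluding from the signed-permutation form of $\widetilde{\mathbf{S}}^2$.
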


\begin{proof}
  It remains to show that the matrix $\widetilde{\mathbf{S}}$ is unitary. Since $\tilde{\mathbf{S}}^2$ is the matrix of a signed permutation, we show that
\[
  \overline{\tilde{\mathbf{S}}_{X,Y}}=(-1)^{\delta_{Y^*\otimes\bar{\1}\not\in J}}\tilde{\mathbf{S}}_{X,\bar{Y}}
\]
for all $X,Y\in\Irr(\mathcal{C})$. As in the proof of Corollary \ref{cor:N-modular}, we show that
\[
  \overline{\tilde{\mathbf{S}}_{X,Y}}=\frac{\dim^R(Y^*\otimes\bar{\1})}{\dim^R(\bar{Y})}\tilde{\mathbf{S}}_{X,\bar{Y}},
\]
and therefore the expected sign appears.
\end{proof}

As in the nondegenerate setting, there is a version of Vafa's theorem, and the proof remains the same.

\begin{theoreme}
  \label{thm:vafa-sl-dg}
  Let $\mathcal{C}$ be a slightly degenerate braided pivotal fusion category. Then for any simple object $X$, the twist $\theta_X$ is a root of unity and so is $\s\xi(\mathcal{C})$.
\end{theoreme}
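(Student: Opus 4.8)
The plan is to reproduce verbatim the proof of Theorem~\ref{thm:vafa-non-dg}, checking that every ingredient survives the passage to the slightly degenerate setting; in fact nothing in that argument used nondegeneracy in an essential way. First I would fix a simple object $X$ and instantiate the lantern identity \eqref{eq:lantern} at $X_1=X_3=X$ and $X_2=X^*$. Since $\1$ occurs in $X^*\otimes X$, the space $V_X:=\Hom_{\mathcal{C}}(X,X\otimes X^*\otimes X)$ is nonzero, and the six operators in \eqref{eq:lantern} act on $V_X$ by post-composition, making \eqref{eq:lantern} an identity of automorphisms of $V_X$. Taking determinants: by naturality of the twist the right-hand side acts on $V_X$ by the scalar $\theta_X^3\theta_{X^*}$, so it contributes $\theta_X^{3\dim V_X}\theta_{X^*}^{\dim V_X}$; decomposing $X\otimes X$, $X\otimes X^*$ and $X^*\otimes X$ into simple objects as in \cite{bakalov-kirillov}, the left-hand side contributes $\prod_{Y\in\Irr(\mathcal{C})}\theta_Y^{A_{X,Y}}$ with $A_{X,Y}=2N_{X,X^*}^YN_{Y,X}^X+N_{X,X}^YN_{Y,X^*}^X$. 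This step involves only the fusion coefficients and the braiding, hence is unchanged, as is the identity $\sum_{Y}A_{X,Y}=3\dim V_X$.

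The sole place where the structure of $\mathcal{Z}_{\mathrm{sym}}(\mathcal{C})$ enters is the substitution for $\theta_{X^*}$. Here I would invoke Proposition~\ref{prop:twist-dim}, which holds with no hypothesis on degeneracy, to write $\theta_{X^*}=\theta_X\,\dim^L(X)/\dim^R(X)$, and then use the symmetry of $S^{R,R}$ together with the definition of $\bar{\1}\in J$ to rewrite $\dim^L(X)/\dim^R(X)=s_X^R(\bar{\1})/\dim^R(\bar{\1})=:r_X$. The key input is that for a slightly degenerate category $\bar{\1}$ is invertible — this has already been established in the present section — so $\bar{\1}^{\otimes n}\simeq\1$ for some $n\ge1$, whence $s_X^R(\bar{\1})$ and $\dim^R(\bar{\1})=s_{\1}^R(\bar{\1})$ are $n$-th roots of unity and $r_X$ is a root of unity. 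Substituting $\theta_{X^*}=\theta_Xr_X$ in the determinant identity then yields, as in the proof of Theorem~\ref{thm:vafa-non-dg}, a relation $\prod_{Y}\theta_Y^{M_{X,Y}}=r_X^{\dim V_X}$ with $M_{X,Y}=A_{X,Y}-4(\dim V_X)\delta_{X,Y}$.

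Because $A_{X,X}\le\sum_Y A_{X,Y}=3\dim V_X$, the $X$-th diagonal entry of $M$ has absolute value $4\dim V_X-A_{X,X}\ge\dim V_X$, which strictly exceeds the off-diagonal row sum $3\dim V_X-A_{X,X}$; thus $M$ is strictly diagonally dominant, hence invertible over $\mathbb{Q}$. Passing to the torsion-free quotient of $\Bbbk^{\times}$ by its roots of unity, the relations become $\sum_Y M_{X,Y}[\theta_Y]=0$, and the invertibility of $M$ forces $[\theta_Y]=0$ for every $Y$, i.e.\ each $\theta_Y$ is a root of unity (equivalently, one diagonalizes $M$ by integral row and column operations and reads off that a fixed power of each $\theta_Y$ is a root of unity). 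Finally, for $\s\xi(\mathcal{C})$ I would apply Theorem~\ref{thm:sl2rel_sldeg}: with $\mathfrak s=\frac{1}{\sqrt{\dim^R(\bar{\1})}\sqrt{\sdim(\mathcal{C})}}\mathbf S$ and $\mathfrak t=\mathbf T$ one has $\mathfrak s^4=\id$ and $(\mathfrak{st}^{-1})^3=\s\xi(\mathcal{C})\id$; taking determinants, $\det\mathfrak s$ is a fourth root of unity and $\det\mathfrak t^{-1}=\prod_{X\in J}\theta_X$ is a root of unity by the first part, so $\s\xi(\mathcal{C})^{|J|}$, and therefore $\s\xi(\mathcal{C})$, is a root of unity. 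The only point requiring care is the bookkeeping of the non-spherical correction $\theta_{X^*}\ne\theta_X$ — namely that the constant appearing in the determinant identity is $r_X=\dim^L(X)/\dim^R(X)$ rather than $1$ — but since the invertibility of $\bar{\1}$ in the slightly degenerate setting is already available, this presents no real obstacle and the argument proceeds exactly as for Theorem~\ref{thm:vafa-non-dg}.
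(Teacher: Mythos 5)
Your proposal is correct and follows the paper's own route: the paper proves this theorem by asserting that the proof of Theorem~\ref{thm:vafa-non-dg} (Vafa's argument via the identity \eqref{eq:lantern}, the determinant computation on $V_X$, and strict diagonal dominance of $M$) carries over verbatim, which is exactly what you do. You also correctly isolate the only degeneracy-sensitive inputs — the invertibility of $\bar{\1}$ and the identity $\dim^L(X)/\dim^R(X)=s_X^R(\bar{\1})/\dim^R(\bar{\1})$ coming from the definition of $\bar{\1}\in J$ and the symmetry of $S^{R,R}$ — both of which the paper establishes in the slightly degenerate setting, and your deduction of the statement for $\s\xi(\mathcal{C})$ from Theorem~\ref{thm:sl2rel_sldeg} matches the nondegenerate case.
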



\section{Braided fusion supercategory associated to a slightly
  degenerate braided fusion category}
\label{sec:supermon}

In this section, we recall the notion of a superfusion category, which
is a fusion category enriched over super vector spaces. We
then construct, following \cite{monoidalsupercat} and \cite{aobc}, a superfusion
category $\hat{\mathcal{C}}$ associated to a slightly degenerate
fusion category $\mathcal{C}$. The braiding and the pivotal structure
of $\mathcal{C}$ naturally endow the supercategory $\hat{\mathcal{C}}$
with a braiding and a pivotal structure. The $S$-matrix of
$\mathcal{C}$ gives then the structure constants of a quotient of the
super Grothendieck ring of $\hat{\mathcal{C}}$.

\subsection{Superfusion categories}
\label{sec:superfus}

By a superspace, we mean a $\mathbb{Z}/2\mathbb{Z}$-graded vector
space. We denote by $|v|$ the parity of an homogeneous element $v$ of a superspace,
by $\sVect$ the category of superspaces of finite dimension with morphisms even linear maps
(\emph{i.e.} mapping even degree to even degree and odd one to odd one). This
category is monoidal with $(V\otimes W)_0=V_0\otimes W_0 \oplus
V_1\otimes W_1$ and $(V\otimes W)_1=V_0\otimes W_1 \oplus
V_1\otimes W_0$, braided with braiding defined by
\[
  c_{V,W}(v\otimes w) = (-1)^{|v||w|}w\otimes v,
\]
for $v$ and $w$ homogeneous elements; this braiding is symmetric.

\begin{remark}
  The category of superspaces with even and odd morphisms is not
  monoidal. This is due to the following relation, known as \emph{superinterchange law}
  \[
    (g\otimes g')\circ (f\otimes f') = (-1)^{|g'||f|}(g\circ f)\otimes
    (g'\circ f'),
  \]
  for $f$ and $g$ homogeneous morphisms. The definition of a monoidal supercategory will naturally satisfy
  this superinterchange law.
\end{remark}

A \emph{supercategory} is a $\sVect$-enriched category; a
\emph{superfunctor} between two supercategories is a $\sVect$-enriched
functor (see \cite[Section 1.2]{enriched} for more details); a \emph{supernatural transformation} $\alpha\colon F\rightarrow G$ between two superfunctors $F$ and $G$ is a collection of morphisms $\alpha_X\colon F(X)\rightarrow G(X)$ satisfying a certain supernaturality condition. A supernatural
transformation is said to be \emph{even} if all its components are even.

Now, given two supercategories $\mathcal{A}$ and $\mathcal{B}$, we
define $\mathcal{A}\boxtimes\mathcal{B}$ as the supercategory whose
objects are pairs $(X,Y)$ with $X\in\mathcal{A}$ and $Y\in\mathcal{B}$
and whose morphisms are given by
$\Hom_{\mathcal{A}\boxtimes\mathcal{B}}((X,X'),(Y,Y'))=\Hom_{\mathcal{A}}(X,Y)\otimes\Hom_{\mathcal{B}}(X',Y')$,
the tensor product being the one of superspaces. The composition in $\mathcal{A}\boxtimes\mathcal{B}$ is defined using the braiding in
$\sVect$:
\[
  (g\otimes g')\circ (f\otimes f') = (-1)^{|g'||f|}(g\circ f)\otimes
    (g'\circ f').
  \]

  \begin{definition}[{\cite[Definition 1.4]{monoidalsupercat}}]
    A \emph{monoidal supercategory} is a sextuple
    $(\mathcal{C},\otimes,a,\1,l,r)$ where $\mathcal{C}$ is a
    supercategory, $\otimes\colon
    \mathcal{C}\boxtimes\mathcal{C}\rightarrow\mathcal{C}$ is a
    superfunctor, $\1$ is the unit object, and $a\colon (-\otimes
    -)\otimes - \rightarrow -\otimes(-\otimes -)$,
    $l\colon\1\otimes -\rightarrow -$ and $r\colon -\otimes
    \1\rightarrow -$ are even
    supernatural isomorphisms satisfying axioms analogous to the ones
    of a monoidal category.

    A \emph{monoidal superfunctor} between two monoidal
    supercategories $\mathcal{C}$ and $\mathcal{D}$ is a pair $(F,J)$
    where $F\colon \mathcal{C} \rightarrow \mathcal{D}$ is a
    superfunctor and $J\colon F(-)\otimes F(-) \rightarrow F(-\otimes
    -)$ is an even supernatural isomorphism satisfying axioms analogous
    to the ones for a monoidal functor and such that
    $F(\1_{\mathcal{C}})$ is evenly isomorphic to
    $\1_{\mathcal{D}}$.
  \end{definition}

  We now define the notion of \emph{braided monoidal
  supercategory} \cite[Section 2.2]{aobc}. First, let
$\tau\colon\mathcal{C}\boxtimes\mathcal{C}\rightarrow\mathcal{C}$ denote
the superfunctor sending $(X,Y)$ on $Y\otimes X$ and $f\otimes g$ on
$(-1)^{|f||g|}g\otimes f$. A \emph{braiding} on $\mathcal{C}$ is then
an even supernatural isomorphism $c\colon -\otimes -\rightarrow \tau$
satisfying the usual hexagon axioms of a braided monoidal category.

In a monoidal supercategory $\mathcal{C}$, a \emph{left dual} of an object $X\in\mathcal{C}$ is a triple
  $(X^*,\ev_X,\coev_X)$ where $X^*\in\mathcal{C}$, $\ev_X\colon X^*\otimes X \rightarrow \1$
  and $\coev_X\colon \1 \rightarrow X\otimes X^*$ are even
  morphisms satisfying the same axioms of duality in a rigid monoidal category.
Similarly, there is a notion of right dual. A monoidal supercategory
in which any object has a left and right dual is called \emph{rigid}. In a rigid monoidal supercategory, we define the dual $f^*\in\Hom_{\mathcal{C}}(Y^*,X^*)$ of $f\in\Hom_{\mathcal{C}}(X,Y)$ as in any rigid monoidal category. Note that we have $(g\circ f)^* = (-1)^{|f||g|}f^*\circ g^*$ for two homogeneous morphisms $f$ and $g$, the sign being due to the superinterchange law.

Finally, a \emph{superfusion category} is an abelian semisimple rigid monoidal
supercategory with finite number of simple objects, finite dimensional
spaces of morphisms and simple unit object $\1$.

Before giving a definition of a pivotal structure on a rigid monoidal supercategory, let us have a closer look to the case of the supercategory of superspaces. On $\sVect$, we have two pivotal structures. 

The first one is given by the Drinfeld morphism (see \cite[Section 8.9]{egno}). It is indeed a pivotal structure since the braiding is symmetric. This pivotal structure is spherical and the quantum trace is known as supertrace. One can show that for any morphism $f$ between superspaces $V$ and $W$, the diagram
\begin{equation*}
  \begin{tikzcd}
    V \ar{r}{f}\ar{d}[swap]{a_V} & W\ar{d}{a_W}\\
    V^{**} \ar{r}{f^{**}} & W^{**}
  \end{tikzcd}
\end{equation*}
is commutative.

The second one is given by a slight modification of the previous one by a tensor autofunctor of $\id$ given by $v\mapsto (-1)^{\lvert v\rvert}$ on homogeneous elements. With this pivotal structure, one recover the usual trace as the quantum trace. One can show that for any homogeneous morphism $f$ between superspaces $V$ and $W$, the diagram
\begin{equation*}
  \begin{tikzcd}
    V \ar{r}{f}\ar{d}[swap]{a_V} & W\ar{d}{a_W}\\
    V^{**} \ar{r}{(-1)^{\lvert f\rvert}f^{**}} & W^{**}
  \end{tikzcd}
\end{equation*}
is commutative.

For a rigid monoidal supercategory, we define two even superfunctors $\bid$ and $\sbid$. Both are given by $X\mapsto X$ on objects. On morphisms, $\bid$ sends $f$ to $f^{**}$ whereas $\sbid$ sends $f$ to $(-1)^{\lvert f \rvert}f^{**}$. The functor $\sbid$ has been considered in \cite[Section 8.3]{superpivotal}.

A \emph{pivotal structure} on a rigid monoidal supercategory is an
even monoidal supernatural isomorphism $a\colon \id_{\mathcal{C}}\rightarrow \bid$. As in the
usual case of pivotal monoidal categories, we have the notion of left and right
quantum traces. Note that for $f\colon X\rightarrow Y$ and
$g\colon Y\rightarrow X$ homogeneous morphisms we have
\[
  \Tr^R_X(g\circ f) = (-1)^{|f||g|}\Tr^R_Y(f\circ g)
\]
and similarly for the right quantum trace.

A \emph{superpivotal structure} on a rigid monoidal supercategory is an
even monoidal supernatural isomorphism $a\colon \id_{\mathcal{C}}\rightarrow \sbid$. As in the
usual case of pivotal monoidal categories, we have the notion of left and right
quantum traces. Note that for $f\colon X\rightarrow Y$ and
$g\colon Y\rightarrow X$ homogeneous morphisms we have
\[
  \Tr^R_X(g\circ f) = (-1)^{|f|(|g|+1)}\Tr^R_Y(f\circ g)
\]
and similarly for the right quantum trace.

\subsection{Super Grothendieck ring}
\label{sec:super-groth-ring}

Let
$\mathbb{Z}_\varepsilon=\mathbb{Z}[\varepsilon]/(\varepsilon^2-1)$. The
\emph{super Grothendieck group} $\sGr(\mathcal{C})$
 of an abelian supercategory $\mathcal{C}$ is the
 $\mathbb{Z}_\varepsilon$-module generated by isomorphism classes
 $[X]$ of objects in $\mathcal{C}$ modulo the following relations: if
 $0 \rightarrow X \overset{f}{\rightarrow} Y
 \overset{g}{\rightarrow} Z \rightarrow 0$ is a short exact sequence
 with homogeneous maps, then $[Y]=\varepsilon^{|f|}[X] +
 \varepsilon^{|g|}[Z]$.

 There is a map $\sGr(\mathcal{C})/(\varepsilon-1) \rightarrow
 \Gr(\mathcal{C})$ that is clearly surjective but not
 necessarily injective.
 
 If $\mathcal{C}$ is monoidal, then tensor product in $\mathcal{C}$
 endow $\sGr(\mathcal{C})$ with a structure of an associative
 algebra. If moreover $\mathcal{C}$ is braided, the multiplication is
 commutative.

 In a superfusion category, the super Grothendieck ring is not
 necessarily a free $\mathbb{Z}_{\varepsilon}$-algebra as it may exist
 simple objects with odd automorphisms. It is nevertheless free as a $\mathbb{Z}$-algebra.

\subsection{Braided monoidal supercategory associated to a slightly degenerate braided category}
\label{sec:brmonass}

In this section, we fix a slightly degenerate pivotal braided fusion
category $\mathcal{C}$. We denote by $\e$ the unique non-unit
simple object of $\mathcal{C}'$. We fix an isomorphism $\xi\colon\e\otimes\e\rightarrow\1$.

Define a supercategory $\hat{\mathcal{C}}$ with the same objects as $\mathcal{C}$ and superspaces of morphisms being:
\[
  \Hom_{\hat{\mathcal{C}}}(X,Y)_0=\Hom_{\mathcal{C}}(X,Y) \quad \text{and} \quad
  \Hom_{\hat{\mathcal{C}}}(X,Y)_1=\Hom_{\mathcal{C}}(X,\e\otimes Y).
\]
The composition of $f\in\Hom_{\hat{\mathcal{C}}}(X,Y)$ and $g\in\Hom_{\hat{\mathcal{C}}}(Y,Z)$ is obviously defined except when $f$ and $g$ are odd morphisms. In this case, $g\circ f$ is the map given by the following composition
\[ 
\begin{tikzcd}[column sep = large]
  X \ar{r}{f} & \e\otimes Y \ar{r}{\id_{\e}\otimes
  g} &\e\otimes\e\otimes Z \ar{r}{\xi\otimes\id_{Y}} & Z.
\end{tikzcd}
\]

By a case-by-case checking, we see that the composition above is indeed associative.

The category $\hat{\mathcal{C}}$ is equipped with a tensor product. On objects, the tensor product is the same as in $\mathcal{C}$. On morphisms, we define the tensor product $f\hat{\otimes}f'$ on homogeneous morphisms $f\in\Hom_{\hat{\mathcal{C}}}(X,Y)$ and $f'\in\Hom_{\hat{\mathcal{C}}}(X',Y')$ as follows:
\begin{itemize}
\item if $f$ and $f'$ are both even, $f\hat{\otimes}f'=f\otimes f'$,
\item if $f$ is even and $f'$ odd,
  $f\hat{\otimes}f'=(c_{Y,\e}\otimes\id_{Y'})\circ f\otimes f'$,
\item if $f$ is odd and $f'$ even, $f\hat{\otimes}f'=f\otimes f'$,
\item if $f$ and $f'$ are both odd, $f\hat{\otimes}f'=(-\xi\otimes
  \id_{Y\otimes Y'})\circ(\id_{\e}\otimes c_{Y,\e}\otimes\id_{Y'})\circ f\otimes f'$.
\end{itemize}

As for the associativity, we check case-by-case, that $\hat{\otimes}$ satisfies the superinterchange law.

The braiding in $\mathcal{C}$ gives an even morphism
$c_{X,Y}\in\Hom_{\hat{\mathcal{C}}}(X\otimes Y,Y\otimes X)$.

\begin{proposition}
  Let $\mathcal{C}$ be a slightly degenerate braided category. The braiding in $\mathcal{C}$ endows $\hat{\mathcal{C}}$ with a structure of a braided supercategory.
\end{proposition}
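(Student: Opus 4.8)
The plan is to verify that the even morphisms $c_{X,Y}\in\Hom_{\hat{\mathcal{C}}}(X\otimes Y, Y\otimes X)$, which are exactly the braiding morphisms of $\mathcal{C}$ viewed in degree $0$, assemble into a braiding on $\hat{\mathcal{C}}$ in the sense of \cite[Section 2.2]{aobc}. That means checking three things: (1) that $c$ is an even supernatural isomorphism $-\hat{\otimes}- \rightarrow \tau$, (2) that the two hexagon axioms hold in $\hat{\mathcal{C}}$, and (3) that each $c_{X,Y}$ is invertible in $\hat{\mathcal{C}}$ (which is immediate, since $c_{X,Y}^{-1}$ from $\mathcal{C}$ serves as an even inverse). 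Since the associativity constraint of $\hat{\mathcal{C}}$ coincides with that of $\mathcal{C}$ on even morphisms and the hexagons are equalities between even morphisms built from $c$, $a$, and identities, the hexagon axioms in $\hat{\mathcal{C}}$ reduce verbatim to the hexagon axioms in $\mathcal{C}$. So the only real content is the supernaturality of $c$.

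First I would recall the supernaturality condition: for homogeneous morphisms $f\in\Hom_{\hat{\mathcal{C}}}(X,Y)$ and $f'\in\Hom_{\hat{\mathcal{C}}}(X',Y')$ one must have
\[
  c_{Y,Y'}\circ (f\,\hat\otimes\, f') = (-1)^{|f||f'|}\,\tau(f\otimes f')\circ c_{X,X'} = (-1)^{|f||f'|}\,(f'\,\hat\otimes\, f)\circ c_{X,X'},
\]
where on the right one uses the definition of $\tau$ on morphisms. I would verify this case-by-case according to the parities of $f$ and $f'$, exactly paralleling the case-by-case checks the paper has already done for associativity of composition and for the superinterchange law of $\hat\otimes$. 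When both $f,f'$ are even this is just binaturality of $c$ in $\mathcal{C}$. When exactly one is odd, say $f'$ odd and $f$ even, one side involves the inserted factor $c_{Y,\e}$ coming from the definition of $f\hat\otimes f'$, and the other side involves $c_{Y',\e}$; the identity then follows from naturality of $c$ in $\mathcal{C}$ applied to $f$ and $f'$ together with the hexagon identity relating $c_{-,\e\otimes-}$ to $c_{-,\e}$ and $c_{-,-}$ (i.e.\ using that $\e$ is a genuine object of $\mathcal{C}$ and $c$ is a braiding there). The both-odd case is the most involved: both sides carry the factor $-\xi$ and two inserted crossings with $\e$, and one reconciles them using naturality of $c$, the hexagon axiom, and crucially the fact that $\e$ lies in the symmetric center, so that $c_{\e,\e}\circ c_{\e,\e}=\id$ and $c_{Y,\e}$ is ``transparent'' in the appropriate sense; the chosen isomorphism $\xi\colon\e\otimes\e\to\1$ being fixed, its interaction with the braiding is controlled (one may need $\theta_\e$, but under the standing hypothesis $\theta_\e=1$ this is harmless, and in fact supernaturality should hold regardless).

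The hard part will be the both-odd case of supernaturality, where one must carefully track the sign $-\xi$, two occurrences of the crossing $c_{-,\e}$, and the symmetric-center relations, making sure that the braiding $c_{Y,\e}$ (which is transparent because $\e\in\mathcal{Z}_{\mathrm{sym}}(\mathcal{C})$) can be slid past the relevant strands and cancelled consistently on both sides of the equation. Once supernaturality is established, I would note that $c$ is an isomorphism with inverse the image of $c^{-1}$ from $\mathcal{C}$ in degree $0$, and that the hexagon axioms transfer directly from $\mathcal{C}$ because in $\hat{\mathcal{C}}$ the objects, the tensor product on objects, the associator, and the morphisms $c_{X,Y}$ are literally those of $\mathcal{C}$ (all even), so no new signs enter. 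This completes the verification that $\hat{\mathcal{C}}$ is a braided supercategory.
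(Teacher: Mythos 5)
Your overall strategy (check supernaturality of $c$, note that hexagons and invertibility transfer verbatim) is sound and does prove the statement, but it is organized differently from the paper, and the difference lands exactly on the spot where your sketch is thinnest. The paper never confronts the both-odd case of supernaturality directly: it verifies the $(-1)^{|f||g|}$-commutativity of the naturality square only when one of the two morphisms is an identity (for $f=\id$ this is naturality of $c$ in $\mathcal{C}$ plus a hexagon axiom; for $g=\id$ and $f$ odd it is a hexagon axiom plus transparency of $\e$), and then obtains the general case, in particular both morphisms odd, from the superinterchange law, since $f\hat{\otimes}f'=(f\hat{\otimes}\id)\circ(\id\hat{\otimes}f')$ and $(\id\hat{\otimes}f')\circ(f\hat{\otimes}\id)=(-1)^{|f||f'|}f\hat{\otimes}f'$. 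This reduction is what keeps the proof short; the fermionic sign is already absorbed into the (previously checked) superinterchange law for $\hat{\otimes}$.

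If you carry out the both-odd case by hand as you propose, the ingredients you list are not quite sufficient as stated. Expanding $c_{\e\otimes Y,\e\otimes Y'}$ via the hexagons and cancelling $c_{Y',\e}\circ c_{\e,Y'}=\id$ by transparency leaves you comparing $\xi$ with $\xi\circ c_{\e,\e}$, and the required global sign $(-1)^{|f||f'|}=-1$ comes precisely from $c_{\e,\e}=-\id_{\e\otimes\e}$, i.e.\ from the fact that the symmetric center is $\sVect$ with its fermionic self-braiding rather than $\mathrm{Rep}(\mathbb{Z}/2\mathbb{Z})$. The facts you actually invoke, namely $c_{\e,\e}\circ c_{\e,\e}=\id$ and transparency of $\e$, hold in both situations and cannot by themselves produce the sign, so this point must be made explicit for the both-odd computation to close. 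Your remark that $\theta_{\e}$ plays no role is correct (the braided structure on $\hat{\mathcal{C}}$ uses no pivotal hypothesis), and your observations about the hexagon axioms and the inverse of $c_{X,Y}$ agree with what the paper leaves implicit.
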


\begin{proof}
  We have to check that for any two homogeneous morphisms
  $f\in\Hom_{\hat{\mathcal{C}}}(X,X')$ and
  $g\in\Hom_{\hat{\mathcal{C}}}(Y,Y')$ the diagram
  \[
    \begin{tikzcd}
      X\otimes Y \arrow{r}{c_{X,Y}}\ar{d}{f\hat{\otimes}g}& Y\otimes X\ar{d}{g\hat{\otimes}f}\\
      X'\otimes Y'\arrow{r}{c_{X',Y'}}& Y'\otimes X'
    \end{tikzcd}
  \]
  is $(-1)^{|f||g|}$-commutative in $\hat{\mathcal{C}}$. If $X'=X$ and
  $f=\id_X$, the commutativity follows from the naturality of the
  braiding in $\mathcal{C}$ and from one of the hexagon axioms. If
  $Y=Y'$, $g=\id_Y$ and $f$ is of even degree it is trivially
  commutative; if $f$ is of odd degree, we have to show that
  \[
    \begin{tikzcd}
      \e\otimes X'\otimes Y \ar{r}{c_{\e\otimes
        X',Y}}\ar{rd}[swap]{\id_{\e}\otimes c_{X',Y}}& Y\otimes
      \e\otimes X'\ar{d}{c_{Y,\e}\otimes \id_{X'}}\\
      & \e\otimes Y\otimes X'
    \end{tikzcd}
  \]
  is commutative in $\mathcal{C}$, which follows from one of the hexagon
  axioms and the fact that $\e$ is in the symmetric center of
  $\mathcal{C}$. The general case now follows from the superinterchange law.
\end{proof}

The category $\hat{\mathcal{C}}$ is also rigid, the evaluations and
coevaluations being the same as in $\mathcal{C}$. The dual of a
morphism of odd degree $f\in\Hom_{\hat{\mathcal{C}}}(X,Y)$ is then
given by the following map 
$f^\circledast\in\Hom_{\mathcal{C}}(Y^*,\e\otimes X^*)$
\[
  \begin{tikzcd}
    Y^*\ar{r}{\coev_X} & Y^*\otimes X \otimes X^* \ar{r}{f} &
    Y^*\otimes \e\otimes Y \otimes X^*
    \ar{r}{c_{Y^*,\e}} &\e\otimes Y^*\otimes Y
    \otimes X^* \\ &&\phantom{ Y^*\otimes \e\otimes Y \otimes X^*}\ar{r}{\ev_{Y}} &\e\otimes X^*.
  \end{tikzcd}
\]

\begin{proposition}
  If $a_{\e}=u_{\e}$ then the pivotal structure on $\mathcal{C}$ gives rise to a pivotal structure on $\hat{\mathcal{C}}$.
  If $a_{\e}=-u_{\e}$ then the pivotal structure on $\mathcal{C}$ gives rise to a superpivotal structure on $\hat{\mathcal{C}}$.
\end{proposition}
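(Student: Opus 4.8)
The plan is to let the new (super)pivotal structure have exactly the components of $a$, so that everything reduces to a single supernaturality check against odd morphisms, where the scalar $\theta_\e$ is the only quantity that can intervene.

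First I would set $\hat{a}_X := a_X\in\Hom_{\mathcal{C}}(X,X^{**})=\Hom_{\hat{\mathcal{C}}}(X,X^{**})_0$, which is an even isomorphism with even inverse $a_X^{-1}$. Since $a=u\theta$ we have $a_\e=u_\e\theta_\e$, and as $\e$ generates $\mathcal{Z}_{\mathrm{sym}}(\mathcal{C})=\sVect$ the scalar $\theta_\e$ equals $\pm1$; hence $a_\e=u_\e$ exactly when $\theta_\e=1$ and $a_\e=-u_\e$ exactly when $\theta_\e=-1$. Monoidality of $\hat{a}$ is immediate: $\hat{\otimes}$ of two even morphisms coincides with $\otimes$ in $\mathcal{C}$, and the canonical isomorphism $(X\otimes Y)^{**}\simeq X^{**}\otimes Y^{**}$ of $\hat{\mathcal{C}}$ is assembled from the (even) evaluations and coevaluations, hence is the one of $\mathcal{C}$; so $\hat{a}_{X\otimes Y}=a_{X\otimes Y}=a_X\otimes a_Y=\hat{a}_X\,\hat{\otimes}\,\hat{a}_Y$ by monoidality of $a$ in $\mathcal{C}$. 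Likewise, for an \emph{even} morphism $f\colon X\to Y$ one has $\bid(f)=\sbid(f)=f^{**}$ (the ordinary double dual), and the supernaturality square $\hat{a}_Y\circ f=f^{**}\circ\hat{a}_X$ is exactly naturality of $a$ in $\mathcal{C}$. So in both cases the statement reduces to checking supernaturality against an odd morphism.

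Let $f\colon X\to Y$ be odd, i.e.\ $f\in\Hom_{\mathcal{C}}(X,\e\otimes Y)$. Viewing $f$ as a morphism of $\mathcal{C}$, naturality and monoidality of $a$ in $\mathcal{C}$ give $(a_\e\otimes a_Y)\circ f=f^{**}\circ a_X$, where $f^{**}\colon X^{**}\to\e^{**}\otimes Y^{**}$ is the ordinary double dual. The heart of the proof is to compare this with $\bid(f)=f^{\circledast\circledast}\colon X^{**}\to\e\otimes Y^{**}$, computed in $\hat{\mathcal{C}}$: expanding the definition of $\circledast$ on odd morphisms twice and simplifying — moving the pivotal maps through the braidings by naturality, cancelling the zig-zags, and using that $\e$ is symmetric-central (so that the single $\e$-strand created by $f$ can be freely untangled past the $X$- and $Y$-strands, via $c_{-,\e}\circ c_{\e,-}=\id$) — one obtains that $f^{\circledast\circledast}$ equals $f^{**}$ followed, on the $\e$-leg, by the \emph{braided} identification $\e^{**}\simeq\e$. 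Since $\circledast$ is built only from the braiding and the rigid structure, which is precisely the data defining the Drinfeld morphism, this identification is $u_\e^{-1}$, not the pivotally-chosen $a_\e^{-1}$; equivalently, $f^{\circledast\circledast}$ is the $\mathcal{C}$-double dual of $f$ with a twist $\theta_\e$ peeled off the $\e$-strand. Therefore, using $u_\e^{-1}a_\e=\theta_\e$,
\[
  f^{\circledast\circledast}\circ\hat{a}_X=(u_\e^{-1}a_\e\otimes a_Y)\circ f=\theta_\e\,(\id_\e\otimes a_Y)\circ f=\theta_\e\,(\hat{a}_Y\circ f),
\]
the last composition being taken in $\hat{\mathcal{C}}$.

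Now I read off the two cases. If $a_\e=u_\e$, then $\theta_\e=1$ and the displayed identity says $\bid(f)\circ\hat{a}_X=\hat{a}_Y\circ f$ for every homogeneous $f$; together with the evenness and monoidality established above, $\hat{a}$ is then a pivotal structure on $\hat{\mathcal{C}}$. If $a_\e=-u_\e$, then $\theta_\e=-1$, so for odd $f$ one has $\hat{a}_Y\circ f=-\,f^{\circledast\circledast}\circ\hat{a}_X=(-1)^{|f|}\bid(f)\circ\hat{a}_X=\sbid(f)\circ\hat{a}_X$, and the same equality is trivial for even $f$; hence $\hat{a}$ is a superpivotal structure on $\hat{\mathcal{C}}$. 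The main obstacle is the simplification in the previous paragraph: bringing the doubly-nested definition of $\circledast$ into the normal form ``the $\mathcal{C}$-double dual with $\theta_\e$ removed from the $\e$-strand'' while keeping track of every associativity and unit constraint; once that normal form is in hand, the conclusion is mechanical.
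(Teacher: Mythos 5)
Your proof is correct and follows essentially the same route as the paper: both reduce the question to (super)naturality of $a$ against odd morphisms, where the mismatch between the double dual in $\hat{\mathcal{C}}$ and the pivotal data of $\mathcal{C}$ is measured by the Drinfeld morphism on $\e$, so that the scalar $\theta_{\e}=u_{\e}^{-1}a_{\e}=\pm 1$ decides between pivotal and superpivotal. The only difference is bookkeeping: the paper factors $a=u\theta$ and uses naturality of $\theta$ together with $u_Y\hat{\circ}\hat{f}=\hat{f}^{**}\hat{\circ}u_X$ in $\hat{\mathcal{C}}$, whereas you keep $a$ whole and place the Drinfeld correction on the $\e$-strand of $f^{\circledast\circledast}$; these two key identities are equivalent (via naturality of $u$ and $c_{Y,\e}\circ c_{\e,Y}=\id$), and each is left at a comparable level of graphical detail.
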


\begin{proof}
  Let $\hat{f}\in\Hom_{\hat{\mathcal{C}}}(X,Y)$ of odd degree equal to $f\in\Hom_{\mathcal{C}}(X,\e\otimes Y)$. Denote $\theta$ the twist in $\mathcal{C}$ associated to the pivotal structure. Using its naturality, we have $(\theta_{\e} \otimes \theta_{Y}) \circ f = f \circ \theta_X$ as morphisms in $\mathcal{C}$. Then, in $\hat{\mathcal{C}}$, we fins that $\theta_{\e} \theta_Y \hat{\circ}\hat{f} = \hat{f}\hat{\circ}\theta_X$, identifying $\theta_X$ and $\theta_Y$ to even morphisms in $\hat{\mathcal{C}}$ and $\theta_{\e}$ to the scalar $\pm 1$. By definition of the twist, we have $a_Z=u_Z\circ \theta_Z$ in $\mathcal{C}$ for any object $Z$ and therefore we have in $\hat{\mathcal{C}}$ 
\[
  \hat{a}_Y \hat{\circ} \hat{f} = \theta_{\e} u_Y\hat{\circ}\hat{f}\hat{\circ}\theta_X.
\]
But in $\hat{\mathcal{C}}$, $u_Y\hat{\circ}\hat{f} = \hat{f}^{**}\hat{\circ} u_X$, exactly as in a pivotal rigid monoidal category. Finally, we have
\[
  \hat{a}_Y \hat{\circ} \hat{f} = \theta_{\e} \hat{f}^{**}\hat{\circ} \hat{a}_X,
\]
which gives us a pivotal or a superpivotal structure on $\hat{\mathcal{C}}$.
\end{proof}

We end this section with the super Grothendieck ring. The quotient ring $\Gr(\mathcal{C})/([\mathbf{1}]-\dim(\e)[\e])$ can be seen as a quotient of the super Grothendieck ring of $\hat{\mathcal{C}}$ 
\[
  \Gr(\mathcal{C})/([\mathbf{1}]-\dim(\e)[\e]) \simeq \sGr(\hat{\mathcal{C}})/(1-\dim(\e)\varepsilon).
\]
The quotient ring $\sGr(\hat{\mathcal{C}})/(1-\varepsilon)$ is isomorphic to the usual Grothendieck ring of $\hat{\mathcal{C}}$ and to the quotient ring $\Gr(\mathcal{C})/([\mathbf{1}]-[\e])$. Therefore, if $\dim(\e)=-1$, we are in the following situation:
\[
\begin{tikzcd}
  & \sGr(\hat{\mathcal{C}}) \ar{dl}[swap]{\varepsilon=1}\ar{dr}{\varepsilon=-1}& \\
  \Gr(\hat{\mathcal{C}})\simeq\Gr(\mathcal{C})/([\mathbf{1}]-[\e]) & & \Gr(\mathcal{C})/([\mathbf{1}]+[\e])
\end{tikzcd}
\]
The ring $\sGr(\mathcal{C})$ is free as a $\mathbb{Z}$-module and of rank $\lvert \Irr(\mathcal{C}) \rvert$. The two quotient rings $\Gr(\mathcal{C})/([\mathbf{1}]-[\e])$ and $\Gr(\mathcal{C})/([\mathbf{1}]-[\e])$ are also free as $\mathbb{Z}$-modules and of rank $\frac{\lvert \Irr(\mathcal{C}) \rvert}{2}$. If for all simple $X,Y,Z$ we have $N_{X,Y}^ZN_{X,Y}^{\e\otimes Z}=0$ (see \cite[Question 2.19]{16fold} for a related question ; this is true if $\mathcal{C}$ is graded and the grading of $\e$ is non trivial), we remark that the structure constants of the ring $\Gr(\mathcal{C})/([\mathbf{1}]-[\e])$ relatively to a choice of a basis $J$ as in Section \ref{sec:degfus} are equal to the absolute values of the structure constants of the ring $\Gr(\mathcal{C})/([\mathbf{1}]+[\e])$ relatively to the same basis $J$.


\section{An application to Bonnafé-Rouquier asymptotic cell category}
\label{sec:appli}

In this section, we consider the example of Bonnafé and Rouquier
\cite{double_taft} and give another interpretation of their
categorification of a $\mathbb{Z}$-fusion datum. We fix an integer
$d\geq 2$ and $\zeta$ a primitive $d$-th root of unity in
$\mathbb{C}$. The algebra $D(B)$ is the algebra with generators
$K,z,E,F$ satisfying the following relations: 
\begin{align*}
  K^d&=z^d=1,\\
  E^d&=F^d=0,\\
  [z,E]&=[z,F]=[z,K]=0,\\
  KE&=\zeta EK,\\
  KF&=\zeta^{-1}FK,\\
  [E,F]&=K-zK^{-1}.
\end{align*}

This algebra is the Drinfeld double of the Taft algebra, a finite
dimensional version of the quantum enveloping algebra of a Borel of
$\mathfrak{sl}_2$. The category $D(B)\text{-}\text{mod}$ of finite
dimensional $D(B)$-modules has $d^2$ simple modules. For each integer $1\leq l \leq d$ there exists $d$ simple modules of dimension $l$ denoted by $M_{l,p}$, $p\in \mathbb{Z}/d\mathbb{Z}$ (see \cite[2.A]{double_taft} for further details).

As the algebra $D(B)$ is a braided Hopf algebra, the category
$D(B)\text{-}\text{mod}$ is a braided tensor category. We endow it with a pivotal
structure whose pivot is given by $z^{-1}K$. We denote by
$\mathcal{C}$ the semisimplification of the category $D(B)\text{-}\text{mod}$. The
simple objects in this category are the indecomposable finite dimensional $D(B)$-modules with non-zero positive and negative quantum dimension. In particular, the modules $M_{l,p}$ for $1\leq l<d$ and $p\in\mathbb{Z}/d\mathbb{Z}$ are simple in $\mathcal{C}$ and the modules $M_{d,p}$ are isomorphic to $0$.

\begin{proposition}
  \label{prop:stabtens}
  The full subcategory $\mathcal{D}$ of $\mathcal{C}$ additively generated by the simple modules $M_{l,p}$ for $1 \leq l < d$ and $p \in \mathbb{Z}/d\mathbb{Z}$ is stable by tensor product.
\end{proposition}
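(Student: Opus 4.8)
The plan is to reduce the statement to the tensor‑product decomposition of the simple modules $M_{l,p}$ inside $D(B)\text{-}\mathrm{mod}$, and then to pass to the semisimplification $\mathcal{C}$ using the description of $\mathcal{C}^{\mathrm{ss}}$ recalled in Section \ref{sec:sscat}.

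First I would recall from \cite[2.A]{double_taft} the fusion rule: for $1\le l,l'<d$ the $D(B)$-module $M_{l,p}\otimes M_{l',p'}$ is a direct sum of simple modules $M_{m,q}$ with $1\le m\le d$ (the usual Clebsch--Gordan indices $m=|l-l'|+1,|l-l'|+3,\dots$, those with $m>d$ being folded back, and $q$ determined by $p$ and $p'$) together with indecomposable \emph{projective} $D(B)$-modules, which occur exactly when $l+l'-1>d$. Conceptually this is because each $M_{l,p}$ with $l\le d$ is a standard (Weyl) module lying in, or on the upper wall of, the lowest alcove, hence is a tilting module; a tensor product of tilting modules is tilting, so $M_{l,p}\otimes M_{l',p'}$ decomposes as a sum of indecomposable tilting modules $T_{m,q}$, and $T_{m,q}\cong M_{m,q}$ for $m<d$ whereas $T_{m,q}$ is projective (and injective) for $m\ge d$.

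Second, I would use that every indecomposable projective $D(B)$-module, as well as every $M_{d,p}$, has zero quantum dimension for the pivot $z^{-1}K$: for $M_{d,p}$ this is exactly the statement that it vanishes in $\mathcal{C}$, and for a projective $P$ it follows either from \cite{double_taft} or from the fact that $P$ has a standard filtration whose subquotients pair up into $\{V_{m,q},V_{m',q'}\}$ with $\dim^R V_{m,q}+\dim^R V_{m',q'}=0$ (wall reflection of the highest weight around the Steinberg weight), quantum dimension being additive on short exact sequences. By \cite[Theorem 2.6]{etingof_ostrik_ss} (see Section \ref{sec:sscat}), an indecomposable object of $D(B)\text{-}\mathrm{mod}$ becomes zero in $\mathcal{C}$ precisely when its quantum dimension vanishes. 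Hence, computing $M_{l,p}\otimes M_{l',p'}$ in $\mathcal{C}$, every projective summand and every $M_{d,p}$-summand disappears, and what remains is a direct sum of simple objects $M_{m,q}$ with $1\le m<d$ --- that is, an object of $\mathcal{D}$.

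Finally, since every object of $\mathcal{D}$ is a finite direct sum of the $M_{l,p}$ with $1\le l<d$ and the tensor bifunctor is additive in each variable, the previous step yields $X\otimes Y\in\mathcal{D}$ for all $X,Y\in\mathcal{D}$; together with $\1=M_{1,0}\in\mathcal{D}$ and the stability of $\mathcal{D}$ under duality (the dual of $M_{l,p}$ is again a simple module of dimension $l<d$), this shows that $\mathcal{D}$ is a tensor subcategory. The only non-formal point is the first step: one must know that $M_{l,p}\otimes M_{l',p'}$ has \emph{only} simple and projective indecomposable summands, i.e.\ that no ``intermediate'' indecomposable (a standard module of highest weight between $d$ and $2d-1$, which would produce an exotic new simple object of $\mathcal{C}$) can occur --- this is precisely what the fusion rule of \cite{double_taft}, or the tilting-module argument above, provides.
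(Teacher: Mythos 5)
Your argument is correct in outline, but it takes a genuinely different route from the paper. You work upstream, in $D(B)\text{-}\mathrm{mod}$: you invoke the full decomposition of $M_{l,p}\otimes M_{l',p'}$ into simple summands $M_{m,q}$ ($m\le d$) and indecomposable projectives, note that the projectives and the $M_{d,q}$ have vanishing quantum dimension, and then let the semisimplification functor kill them, so that only objects of $\mathcal{D}$ survive. You also correctly isolate the real danger, namely that a non-simple, non-projective indecomposable summand of nonzero quantum dimension would create an ``exotic'' simple object of $\mathcal{C}$ outside $\mathcal{D}$. The paper avoids needing the full fusion rule and the projectivity analysis altogether: it works entirely inside the semisimple category $\mathcal{C}$, uses only the rule for tensoring with $M_{2,0}$ from \cite[Theorem 3.3]{double_taft} (where, e.g., $M_{2,0}\otimes M_{d-1,p}\simeq M_{d-2,p+1}$, the projective part having already disappeared), and runs an induction on $l$ via the trick that $M_{l,0}\otimes M_{l',p}$ is a direct summand of $(M_{l,0}\oplus M_{l-2,1})\otimes M_{l',p}\simeq M_{2,0}\otimes M_{l-1,0}\otimes M_{l',p}$. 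Your approach buys a cleaner conceptual statement (Clebsch--Gordan plus negligible projectives) and would generalize to any situation where the tilting-type decomposition is known, but as written it leans on inputs that need justification beyond what you cite: the full fusion rule with projective summands is not in \cite[2.A]{double_taft} (cited there only for the classification of simples), the ``standard/tilting module'' language does not literally apply to the self-injective algebra $D(B)$ (so one should either quote the explicit decomposition results for $D(B)$ or for the restricted quantum group of $\mathfrak{sl}_2$, or reprove them by an induction much like the paper's), and the vanishing of $\dim^R$ on projectives should be pinned down, e.g.\ by the additivity of quantum dimension on the standard filtration of $P_{l,p}$ together with $\dim^R(M_{d-l,l+p})=-\dim^R(M_{l,p})$. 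The paper's induction is more economical given the available reference; your route is heavier on input but makes the mechanism (projectives are negligible) more transparent.
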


\begin{proof}
  As the category $\mathcal{C}$ is semi-simple and $M_{l,p}\simeq M_{l,0}\otimes M_{1,p}$, it suffices to show that, for $1\leq l \leq l' < d$ and $p\in\mathbb{Z}/p\mathbb{Z}$, the simple constituents in $\mathcal{C}$ of $M_{l,0}\otimes M_{l',p}$ are of the form $M_{n,q}$. We proceed by induction on $l$, the case $l=1$ being trivial. For $l=2$, this follows from \cite[Theorem 3.3]{double_taft}:
\[
  M_{2,0}\otimes M_{l',p}\simeq
  \begin{cases}
    M_{2,p} & \text{if } l'=1,\\
    M_{l'+1,p}\oplus M_{l'-1,p+1} & \text{if } 1<l'<d-1,\\
    M_{d-2,p+1} & \text{if } l'=d-1.
  \end{cases}
\]
Now, for $l\geq 3$ and $l'\geq l$, the module $M_{l,0}\otimes
M_{l',p}$ is a direct summand of $(M_{l,0}\oplus M_{l-2,1})\otimes
M_{l',p} \simeq M_{2,0}\otimes M_{l-1,0}\otimes M_{l',p}$. By
induction, every simple constituent of the module $M_{l-1,0}\otimes
M_{l',p}$ is of the form $M_{n,q}$ and therefore so is every simple
constituent of $(M_{l,0}\oplus M_{l-2,1})\otimes M_{l',p}$. 
\end{proof}

The category $\mathcal{D}$ is then a fusion category with $d(d-1)$ simple objects. The $S$-matrix $S^{R,R}$ has been computed by Bonnafé-Rouquier \cite[Corollary 5.5]{double_taft}:
\[
  S^{R,R}_{(l,p),(l',p')} = \frac{\zeta}{1-\zeta}\zeta^{-ll'-lp'-pl'-2pp'}(1-\zeta^{ll'}).
\]
The twist is given by
\[
  \theta_{l,p}=\zeta^{p(l+p)}.
\]
We denote by $\e$ the object $M_{d-1,1}$ which is of left and right quantum dimension $-1$ and of twist $1$. An easy calculation shows that
\[
  S^{R,R}_{(l,p),(d-1,1)}=-\dim^R(M_{l,p}),
\]
and therefore $\e$ lies in the symmetric center of the category $\mathcal{D}$. As this $S$-matrix has rank $\frac{d(d-1)}{2}$, $\e$ is the only non-trivial simple object in the symmetric center. Therefore
\[
  \mathcal{Z}_{\mathrm{sym}}(\mathcal{D})\simeq \sVect,
\]
so that the category $\mathcal{D}$ is slightly degenerate. We then compute the action of tensorisation by $\e$ on the set of simple objects.

\begin{proposition}
  In $\mathcal{D}$ we have $\e\otimes M_{l,p}\simeq M_{d-l,l+p}$.
\end{proposition}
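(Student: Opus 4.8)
The plan is to identify the simple object $\e\otimes M_{l,p}$ by computing its character on the Grothendieck ring and matching it with that of $M_{d-l,l+p}$, using the explicit $S$-matrix and twist of Bonnafé--Rouquier. First I would observe that $\e=M_{d-1,1}$ is invertible (it has quantum dimension $-1$), so $\e\otimes M_{l,p}$ is a simple object of $\mathcal{C}$, and it belongs to $\mathcal{D}$ by Proposition~\ref{prop:stabtens}; hence $\e\otimes M_{l,p}\simeq M_{n,q}$ for some $1\le n\le d-1$ and $q\in\mathbb{Z}/d\mathbb{Z}$, and everything reduces to determining $n$ and $q$.

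The core is a short computation with the formula $S^{R,R}_{(l,p),(l',p')}=\tfrac{\zeta}{1-\zeta}\zeta^{-ll'-lp'-pl'-2pp'}(1-\zeta^{ll'})$. Taking $(l',p')=(1,0)$ gives $\dim^R(M_{l,p})=\tfrac{\zeta}{1-\zeta}\zeta^{-l-p}(1-\zeta^l)$, and since $\zeta^d=1$ and $1-\zeta^{-l}=-\zeta^{-l}(1-\zeta^l)$ one gets $\dim^R(M_{d-l,l+p})=-\dim^R(M_{l,p})$. More generally, expanding the exponent with first index $(d-l,l+p)$ and reducing modulo $d$ yields $S^{R,R}_{(d-l,l+p),(l',p')}=\tfrac{\zeta}{1-\zeta}\zeta^{-lp'-pl'-2pp'}(1-\zeta^{-ll'})$, and using $1-\zeta^{-ll'}=-\zeta^{-ll'}(1-\zeta^{ll'})$ this equals $-S^{R,R}_{(l,p),(l',p')}$. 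Dividing this last identity by $\dim^R(M_{d-l,l+p})=-\dim^R(M_{l,p})$ shows that $s^R_{M_{d-l,l+p}}$ and $s^R_{M_{l,p}}$ agree on every simple object, hence are equal as characters. By the criterion recalled near the beginning of Section~\ref{sec:degfus} (two simple objects have the same $S$-character if and only if they differ by a tensor factor of $\e$), either $M_{d-l,l+p}\simeq M_{l,p}$ or $M_{d-l,l+p}\simeq \e\otimes M_{l,p}$; the first is excluded since comparing second indices would force $l\equiv 0\pmod d$ (equivalently, their quantum dimensions are opposite and nonzero), so $\e\otimes M_{l,p}\simeq M_{d-l,l+p}$. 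As an independent check, the twists agree: $\theta_{\e\otimes M_{l,p}}=\theta_{M_{l,p}}=\zeta^{p(l+p)}$ because $\e$ lies in the symmetric center with $\theta_{\e}=1$, while $\theta_{M_{d-l,l+p}}=\zeta^{(l+p)(d+p)}=\zeta^{p(l+p)}$ by $\zeta^d=1$.

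The only real difficulty is sign bookkeeping: one must keep track of the exponent reduction modulo $d$ and, crucially, of the identity $1-\zeta^{(d-l)l'}=1-\zeta^{-ll'}=-\zeta^{-ll'}(1-\zeta^{ll'})$ -- it is precisely the cancellation of this sign against the one in $\dim^R(M_{d-l,l+p})=-\dim^R(M_{l,p})$ that turns the naive candidate $-s^R_{M_{l,p}}$ into the honest ring character $s^R_{M_{l,p}}$. A more representation-theoretic route, inducting on $l$ via the decomposition of $M_{2,0}\otimes-$ used in Proposition~\ref{prop:stabtens}, would also work but requires tracking several base cases and is messier than the direct $S$-matrix computation.
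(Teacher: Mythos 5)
Your argument is correct, but it is not the route the paper takes: the paper proves the proposition by induction on $l$, reusing the decomposition of $M_{2,0}\otimes M_{l',p}$ from the proof of Proposition~\ref{prop:stabtens} (the cases $l=1,2$ being already settled there, and the inductive step embedding $\e\otimes M_{l,p}$ into $\e\otimes M_{2,0}\otimes M_{l-1,p}$). You instead compute directly with the explicit formula for $S^{R,R}$: reducing the exponent modulo $d$ and using $1-\zeta^{-ll'}=-\zeta^{-ll'}(1-\zeta^{ll'})$ gives $S^{R,R}_{(d-l,l+p),(l',p')}=-S^{R,R}_{(l,p),(l',p')}$ together with $\dim^R(M_{d-l,l+p})=-\dim^R(M_{l,p})\neq 0$, so the signs cancel and $s^R_{M_{d-l,l+p}}=s^R_{M_{l,p}}$ as characters; the criterion recalled at the start of Section~\ref{sec:degfus} ($s^R_X=s^R_Y$ iff $X\simeq Y$ or $X\simeq Y\otimes\e$, valid here since the slight degeneracy of $\mathcal{D}$ is established just before the proposition) then forces $M_{d-l,l+p}\simeq \e\otimes M_{l,p}$, the other alternative being excluded by the opposite nonzero quantum dimensions (or by comparing labels). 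Both proofs are sound; yours is shorter once the Bonnafé--Rouquier $S$-matrix and the general theory of slightly degenerate categories are granted, and it re-derives the dimension identity along the way, while the paper's induction stays at the level of fusion rules and needs nothing beyond the $M_{2,0}$-decomposition already used for Proposition~\ref{prop:stabtens}. Your twist computation is a consistency check only and carries no logical weight, as you note.
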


\begin{proof}
  We proceed by induction on $l$, the case $l=1$ being trivial. For
  $l=2$, the computation is done in the proof of Proposition
  \ref{prop:stabtens}. Suppose that $l\geq 3$. We embed $\e\otimes M_{l,p}$
  in $\e\otimes (M_{l,p}\oplus M_{l-2,p+1})\simeq\e\otimes
  M_{2,0}\otimes M_{l-1,p}$ as in the proof of
  \ref{prop:stabtens}. Therefore, by induction, $\e\otimes
  M_{l,p}\otimes$ embeds in $M_{d-l,l+p}\oplus M_{d-l+2,l-1+p}$. As
  $\e\otimes M_{l-2,p+1}\simeq M_{d-l+2,p+l-1}$ we finally deduce that $M_{l,p}\otimes \e \simeq M_{d-l,l+p}$.
\end{proof}

We may choose $\{M_{l,p}\mid 0\leq p < l + p < d\}$ as a set of representatives of the action on simples given by tensorisation by $\e$.

We now compute explicitly the object $\bar{X}$, which can be reduced to the computation of the invertible object $\bar{\1}$.

The dual of $M_{l,p}$ is given by $M_{l,1-l-p}$ and therefore
\[
  s_{\1}^R(M_{l,p}^*)=\zeta^{p}\frac{\zeta^{l}-1}{\zeta-1} = s_{M_{d-1,0}}^R(M_{l,p})
\]
shows that $\bar{\1}=M_{d-1,0}$ and $\dim^R(\bar{\1})=-\zeta$. 

Following section \ref{sec:degfus}, we define the normalized $S$-matrix of the slightly degenerate pivotal braided fusion category $\mathcal{D}$ by
\[
   \tilde{\mathbf{S}}_{X,Y}=\frac{S^{R,R}_{X,Y}}{\sqrt{\dim(\bar{\1})}\sqrt{\sdim(\mathcal{D})}}.
\]
As $\sdim(\mathcal{D})=\frac{-\zeta d^2}{(1-\zeta)^2}$ (see \cite[Section 5.C]{double_taft}), we have
\[
  \tilde{\mathbf{S}}_{(l,p),(l',p')}=\frac{\zeta^{-ll'-lp'-pl'-2pp'}(\zeta^{ll'}-1)}{d}.
\]
We therefore recover the $S$ and $T$-matrices of Bonnafé-Rouquier
by the means of a slightly degenerate fusion category.

\begin{theoreme}
  The braided pivotal superfusion category $\hat{\mathcal{D}}$
  categorifies the Malle $\mathbb{Z}$-fusion datum associated to the
  non-trivial family of the cyclic group.
\end{theoreme}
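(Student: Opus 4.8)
The plan is to extract from $\hat{\mathcal{D}}$ the $\mathbb{Z}$-modular datum furnished by the constructions of Sections~\ref{sec:degfus} and~\ref{sec:supermon}, and then to identify it term by term with the datum that Malle attaches to the non-trivial family of $\mathbb{Z}/d\mathbb{Z}$ in \cite{unip_malle} (whose explicit form is also recorded in \cite[Section~5.C]{double_taft}).

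First I would record that the machinery applies to $\mathcal{D}$. The object $\e=M_{d-1,1}$ has $\dim(\e)=-1$ and $\theta_\e=1$, so $a_\e=u_\e$; the Proposition of Section~\ref{sec:brmonass} then endows $\hat{\mathcal{D}}$ with an honest (not merely super-) pivotal structure, and the braiding of $\mathcal{D}$ makes $\hat{\mathcal{D}}$ braided, so $\hat{\mathcal{D}}$ is indeed a braided pivotal superfusion category, as the statement requires. By the Corollary following Theorem~\ref{thm:sl2rel_sldeg}, the set $J$, the unit $\1$, the normalized matrix $\widetilde{\mathbf{S}}$ and the diagonal matrix $\mathbf{T}^{-1}$ with entries $\theta_X$ for $X\in J$ constitute a $\mathbb{Z}$-modular datum; and by the Verlinde formula of Section~\ref{sec:degfus} its fusion coefficients are exactly the structure constants $sN_{X,Y}^Z$ of the ring $\Gr(\mathcal{D})/([\1]+[\e])\simeq\sGr(\hat{\mathcal{D}})/(1+\varepsilon)$ identified at the end of Section~\ref{sec:supermon}. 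Since these structure constants can be negative, this is genuinely a $\mathbb{Z}$- and not an $\mathbb{N}$-modular datum, which is why the supercategory $\hat{\mathcal{D}}$ is the natural object to exhibit. Thus $\hat{\mathcal{D}}$ categorifies this $\mathbb{Z}$-modular datum, and the theorem reduces to identifying the latter with Malle's.

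For the identification I would simply compare closed formulas. Malle's index set for the non-trivial family of $\mathbb{Z}/d\mathbb{Z}$ has $\binom{d}{2}$ elements parametrized by pairs $(l,p)$ with $0\le p<l+p<d$; this is precisely the chosen set $J=\{M_{l,p}\mid 0\le p<l+p<d\}$ via $(l,p)\mapsto M_{l,p}$, with distinguished element the unit $\1=M_{1,0}$. His $S$-matrix is $\dfrac{\zeta^{-ll'-lp'-pl'-2pp'}(\zeta^{ll'}-1)}{d}$, which is exactly the value of $\widetilde{\mathbf{S}}_{(l,p),(l',p')}$ computed in this section once $\sdim(\mathcal{D})=-\zeta d^2/(1-\zeta)^2$ and $\dim^R(\bar{\1})=-\zeta$ are substituted. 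His $T$-matrix is diagonal with entries $\zeta^{\pm p(l+p)}=\theta_{l,p}^{\mp1}$, which is $\mathbf{T}^{\pm1}$. Since the two $\mathbb{Z}$-modular data agree on the index set, the distinguished object, the $S$-matrix and the $T$-matrix, they have the same fusion coefficients (these being determined by $S$ alone), so $\hat{\mathcal{D}}$ categorifies the Malle $\mathbb{Z}$-fusion datum of the non-trivial family of the cyclic group.

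The hard part is bookkeeping rather than mathematics: one must reconcile Malle's normalization of $S$ and his convention for $T$ versus $T^{-1}$ with the ones used here (equivalently, check that the square root of $\sdim(\mathcal{D})\dim^R(\bar{\1})$ is chosen consistently with the displayed formula for $\widetilde{\mathbf{S}}$), and confirm that the signed-permutation sign $(-1)^{\delta_{Y^*\otimes\bar{\1}\notin J}}$ appearing in the unitarity statement of the Corollary following Theorem~\ref{thm:sl2rel_sldeg} matches the one built into Malle's datum. These are direct verifications on the explicit expressions already displayed in this section and in \cite{double_taft}, and involve no new idea.
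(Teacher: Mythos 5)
Your proposal is correct and follows the same route as the paper: the paper treats the theorem as a summary of the computations immediately preceding it (identification of $\e=M_{d-1,1}$ with $\dim(\e)=-1$, $\theta_{\e}=1$, the choice of $J$, $\bar{\1}=M_{d-1,0}$, and the explicit normalized matrix $\tilde{\mathbf{S}}$), combined with the general results of Sections \ref{sec:degfus} and \ref{sec:supermon} and the comparison with the Bonnafé--Rouquier/Malle matrices. Your write-up just makes this chain of references explicit, including the check $a_{\e}=u_{\e}$ ensuring $\hat{\mathcal{D}}$ is genuinely pivotal, which is exactly what the paper intends.
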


This example gives us a slightly degenerate braided pivotal fusion
category which is not of the form $\mathcal{D}_0\boxtimes \sVect$ for
a nondegenerate braided pivotal fusion category $\mathcal{D}_0$.

\begin{proposition}
  If $d=2$ the category $\mathcal{D}$ is equal to $\sVect$. If $d>2$ is even, the category $\mathcal{D}$ is not of the form $\mathcal{D}_0\boxtimes \sVect$ with $\mathcal{D}_0$ nondegenerate.
\end{proposition}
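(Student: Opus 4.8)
The plan is to treat the two cases separately. For $d=2$ one has $\zeta=-1$, so the generating set $\{M_{l,p}\mid 1\le l<d,\ p\in\mathbb{Z}/d\mathbb{Z}\}$ is just $\{M_{1,0},M_{1,1}\}=\{\1,\e\}$ and $\mathcal{D}$ has exactly two simple objects; since $\mathcal{D}$ is slightly degenerate, $\mathcal{Z}_{\mathrm{sym}}(\mathcal{D})\simeq\sVect$ already has two simple objects, hence $\mathcal{Z}_{\mathrm{sym}}(\mathcal{D})=\mathcal{D}$, and as the chosen pivotal structure makes $\dim(\e)=-1$ this identifies $\mathcal{D}$ with $\sVect$ equipped with its non-unitary pivotal structure.

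For $d>2$ even I would argue by contradiction. If $\mathcal{D}\simeq\mathcal{D}_0\boxtimes\sVect$ with $\mathcal{D}_0$ nondegenerate, then since $\mathcal{Z}_{\mathrm{sym}}(\mathcal{D}_0\boxtimes\sVect)=\mathcal{Z}_{\mathrm{sym}}(\mathcal{D}_0)\boxtimes\mathcal{Z}_{\mathrm{sym}}(\sVect)=\mathrm{Vec}\boxtimes\sVect$, the object $\e$ must be $\1\boxtimes\e_0$ for $\e_0$ the nontrivial simple of $\sVect$; hence $\mathcal{A}:=\mathcal{D}_0\boxtimes\mathrm{Vec}$ is a fusion subcategory of $\mathcal{D}$ whose simple objects form a set $J$ of representatives for the orbits of $-\otimes\e$ on $\Irr(\mathcal{D})$ (equivalently $N_{X,Y}^{Z\otimes\e}=0$ for all $X,Y,Z\in J$). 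I would then show that $\mathcal{D}$ admits no such $\mathcal{A}$. The orbit of $M_{2,0}$ is $\{M_{2,0},\,M_{d-2,2}=\e\otimes M_{2,0}\}$, so $\mathcal{A}$ contains exactly one of these two, say $X$; since $\e\otimes\e\simeq\1$, in either case $X^{\otimes2}\simeq M_{2,0}^{\otimes2}\simeq M_{3,0}\oplus M_{1,1}$ by the Bonnaf\'e-Rouquier fusion rule (applicable because $1<2<d-1$). Thus $M_{1,1}\in\mathcal{A}$, and since $M_{1,p}\simeq M_{1,1}^{\otimes p}$, all the $M_{1,p}$ ($p\in\mathbb{Z}/d\mathbb{Z}$) lie in $\mathcal{A}$. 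Closure of $\mathcal{A}$ under $-\otimes M_{1,p}$, together with $M_{1,p}\otimes M_{l,q}\simeq M_{l,p+q}$, then forces $J$ to be a union of the rows $R_l:=\{M_{l,q}\mid q\in\mathbb{Z}/d\mathbb{Z}\}$. But the orbit of $M_{d/2,0}$ is $\{M_{d/2,0},M_{d/2,d/2}\}$, two distinct objects both contained in the single row $R_{d/2}$; a union of rows meets it in $0$ or $2$ points, never in exactly one, contradicting that $J$ is a transversal.

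The only genuinely delicate point will be the fusion bookkeeping: that $X^{\otimes2}$ contains $M_{1,1}$ for either representative $X$, and that tensoring by the $M_{1,p}$ permutes simple objects within rows. These rest on $\e\otimes\e\simeq\1$, on the explicit product $M_{2,0}\otimes M_{2,0}\simeq M_{3,0}\oplus M_{1,1}$, and on $M_{l,p}\simeq M_{l,0}\otimes M_{1,p}$, all of which are available from the description of $\mathcal{D}$ given above; the rest is formal. Together with the case $d=2$ this yields the proposition, in particular exhibiting $\mathcal{D}$ (and hence $\hat{\mathcal{D}}$) as a slightly degenerate category that is genuinely non-split.
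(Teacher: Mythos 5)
Your argument is correct and is essentially the paper's: the paper likewise argues by contradiction from whichever of $M_{2,0}$, $\e\otimes M_{2,0}$ lies in $\mathcal{D}_0$, and its induction showing $(M_{2,0})^{\otimes 2k}\otimes M_{\frac{d}{2},0}$ contains $M_{\frac{d}{2},k}$ is the same mechanism as your extraction of $M_{1,1}$ from $M_{2,0}^{\otimes 2}$ shifting the second index, ending in the same contradiction at the orbit $\{M_{\frac{d}{2},0},M_{\frac{d}{2},\frac{d}{2}}\}$. Your repackaging via the invertibles $M_{1,p}$ and rows (together with spelling out the $d=2$ case, which the paper leaves implicit) is a harmless reorganization of the same proof.
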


\begin{proof}
  Let us suppose that $\mathcal{D}=\mathcal{D}_0\boxtimes \sVect$ with $\mathcal{D}_0$ nondegenerate. We
  consider the objects $M_{\frac{d}{2},0}$ and $\e\otimes
  M_{\frac{d}{2},0}\simeq  M_{\frac{d}{2},\frac{d}{2}}$. One of these
  two objects is in $\mathcal{D}_0$.

An easy induction shows that $(M_{2,0})^{\otimes
  2k}\otimes  M_{\frac{d}{2},0}$ has $ M_{\frac{d}{2},k}$ as a simple
constituent. Therefore, if $M_{2,0}$ belongs to $\mathcal{D}_0$,
both $M_{\frac{d}{2},0}$ and $\e\otimes  M_{\frac{d}{2},0}$
are simple objects of $\mathcal{D}_0$, which is a contradiction.

Then the object $M_{2,0}$ does not belong to $\tilde{\mathcal{D}}$ and
therefore $\e\otimes M_{2,0}$ does. But $(\e\otimes
M_{2,0})^{\otimes 2k}\otimes M_{\frac{d}{2},0} \simeq
(M_{2,0})^{\otimes 2k}\otimes  M_{\frac{d}{2},0}$ for any integer $k$,
which leads to the same contradiction.
\end{proof}


\bibliographystyle{smfalpha}
\bibliography{biblio}

\end{document}